\newcommand{\cV}{\mathcal{V}}
\newcommand{\cU}{\mathcal{U}}
\newcommand{\cO}{\mathcal{O}}
\newcommand{\cE}{\mathcal{E}}
\newcommand{\cG}{\mathcal{G}}
\newcommand{\cM}{\mathcal{M}}
\newcommand{\cL}{\mathcal{L}}
\newcommand{\cJ}{\mathcal{J}}
\newcommand{\cC}{\mathcal{C}}
\newcommand{\cI}{\mathcal{I}}
\newcommand{\cQ}{\mathcal{Q}}
\newcommand{\cW}{\mathcal{W}}
\newcommand{\sP}{\mathscr{P}}
\newcommand{\Sym}{\mathrm{Sym}}
\newcommand{\Pic}{\mathrm{Pic}}
\newcommand{\Mor}{\mathrm{Mor}}
\newcommand{\Spec}{\mathrm{Spec}}
\newcommand{\Id}{\mathrm{Id}}
\newcommand{\Loc}{\mathrm{Loc}}
\newcommand{\HM}{\mathrm{HM}}
\newcommand{\Nilp}{\mathrm{Nilp}}
\newcommand{\QQuot}{\mathcal{Q}\mathrm{uot}}
\newcommand{\lra}{\longrightarrow}
\newcommand{\ra}{\rightarrow}
\newcommand{\Op}{\mathfrak{Op}}
\newcommand{\RR}{\mathbb{R}}
\newcommand{\End}{\mathrm{End}}
\newcommand{\Hom}{\mathrm{Hom}}
\newcommand{\GL}{\mathrm{GL}}
\newcommand{\PGL}{\mathrm{PGL}}
\newcommand{\SL}{\mathrm{SL}}
\newcommand{\rk}{\mathrm{rk}}
\newcommand{\mydot}{{\scriptstyle{\bullet}}}
\def\map#1{\ \smash{\mathop{\longrightarrow}\limits^{#1}}\ }
\let\into=\hookrightarrow
\let\onto=\twoheadrightarrow
\newcommand{\crg}{r(r-1)(r-2)(g-1)}
\let\tensor=\otimes
\theoremstyle{plain}
\newtheorem{thm}[equation]{Theorem}
\newtheorem{lem}[equation]{Lemma}
\newtheorem{prop}[equation]{Proposition}
\newtheorem{cor}[equation]{Corollary}
\theoremstyle{definition}
\newtheorem{rem}[equation]{Remark}
\newtheorem{ques}[equation]{Question}
\newtheorem{defi}[equation]{Definition}
\numberwithin{equation}{subsection}
\begin{document}

\title[]{Hitchin-Mochizuki morphism, Opers and Frobenius-destabilized vector bundles over curves}

\begin{abstract}
Let $X$ be a smooth projective curve of genus $g \geq 2$ defined
over an algebraically closed field $k$ of characteristic $p >0$. For
$p>\crg$  we
construct an atlas for the locus of all Frobenius-destabilized
bundles of rank $r$ (i.e. we construct all Frobenius-destabilized bundles of
rank $r$ and degree zero up to isomorphism). This is done by exhibiting a
surjective morphism from a certain Quot-scheme onto the locus of
stable Frobenius-destabilized bundles. Further we show that there is
a bijective correspondence between the set of stable vector bundles
$E$ over $X$ such that the pull-back $F^*(E)$ under the Frobenius
morphism of $X$ has maximal Harder-Narasimhan polygon and the set of
opers having zero $p$-curvature. We also show that, after fixing the
determinant, these sets are finite, which enables us to derive the
dimension of certain Quot-schemes and certain loci of stable
Frobenius-destabilized vector bundles over $X$. The finiteness is
proved by studying the properties of the Hitchin-Mochizuki morphism; an alternative approach 
to finiteness has been realized in \cite{chen}.
In particular we prove a generalization of a result of Mochizuki to
higher ranks.
\end{abstract}

\author{Kirti Joshi}

\author{Christian Pauly}

\address{Mathematics Department \\ University of Arizona \\ 617 N Santa Rita \\ Tucson 85721-0089 \\ USA}

\email{kirti@math.arizona.edu}

\address{Laboratoire de Math\'ematiques J.A. Dieudonn\'e \\
Universit\'e de Nice Sophia-Antipolis \\
06108 Nice Cedex 02 \\ France }
\email{pauly@unice.fr}



\subjclass[2000]{Primary 14H60, Secondary  13A35 14D20}

\maketitle

\bigskip

\begin{flushright}
Dedicated to Vikram B. Mehta
\end{flushright}

\bigskip

\section{Introduction}

\subsection{The statement of the results}
Let $X$ be a smooth projective curve of genus $g \geq 2$ defined
over an algebraically closed field $k$ of characteristic $p >0$. One
of the interesting features of vector bundles in positive
characteristic is the existence of semistable vector bundles $E$
over $X$ such that their pull-back $F^*(E)$ under the absolute
Frobenius morphism $F : X \rightarrow X$ is no longer semistable.
This phenomenon also occurs over base varieties of arbitrary
dimension and is partly responsible for the many difficulties
arising in the construction and the study of moduli spaces of
principal $G$-bundles in positive characteristic. We refer to the
recent survey \cite{langer} for an account of the developments in
this field.

Let us consider the coarse moduli space $\cM(r)$ of $S$-equivalence
classes of semistable vector bundles of rank $r$ and degree $0$ over
a curve $X$ and denote by $\cJ(r)$ the closed subvariety  of
$\cM(r)$ parameterizing semistable bundles $E$ such that $F^*(E)$ is
not semistable. For arbitrary $r,g,p$, besides their non-emptiness
(see \cite{LasP2}), not much is known about the loci $\cJ(r)$. For
example, their dimension and their irreducibility are only known in
special cases or for small values of $r,p$ and $g$; see e.g.
\cite{Ducrohet}, \cite{joshi00},  \cite{JRXY}, \cite{joshi08},
\cite{lange-pauly08}, \cite{LasP2},\cite{Mo}, \cite{Mo2},
\cite{O,O2}, \cite{sun06}. Following \cite{JRXY} one associates to a
stable bundle $E \in \cJ(r)$ the Harder-Narasimhan polygon of the
bundle $F^*(E)$ and defines in that way \cite{shatz} a natural
stratification on the stable\footnote{We note that the
Harder-Narasimhan polygon of $F^*(E)$ may vary when $E$ varies in an
$S$-equivalence class.} locus $\cJ^s(r) \subset \cJ(r)$. Thus the
fundamental question which arises is: what is the geometry of the
locus $\cJ(r)$ and the stratification it carries. Before proceeding
further we would like to recall some notions. A well-known theorem
of Cartier's  (see section 2.1.3) says that there is a one-to-one
correspondence between vector bundles $E$ over $X$ and local systems
$(V, \nabla)$ having $p$-curvature $\psi(V,\nabla)$ zero, which is
given by the two mappings
$$ E \mapsto (F^*(E), \nabla^{can}) \qquad \text{and} \qquad (V, \nabla) \mapsto
V^\nabla = E. $$ Here $V^\nabla$ denotes the sheaf of
$\nabla$-invariant sections and $\nabla^{can}$ the canonical
connection. An important class of local systems (in characteristic
zero) was studied by Beilinson and Drinfeld in their fundamental
work on the geometric Langlands program \cite{BD1}. These local
systems are called opers and they play a fundamental role in the
geometric Langlands program.

An \emph{oper} is a triple $(V,\nabla, V_\mydot)$ (see Definition
\ref{defioper}) consisting of a vector bundle $V$ over $X$, a
connection $\nabla$ on $V$ and a flag $V_\mydot$ satisfying some
conditions. In their original paper \cite{BD1} (see also \cite{BD2})
the authors define opers (with complete flags) over the complex
numbers and identify them with certain differential operators
between line bundles. We note that over a smooth projective curve
$X$ the underlying vector bundle $V$ of an oper as defined in
\cite{BD1} is constant up to tensor product by a line bundle: in the
rank two case the bundle $V$ is also called the {\em Gunning bundle}
$\cG$ (see \cite{Gu}, \cite{Mo}) and is given by the unique
non-split extension of $\theta^{-1}$ by $\theta$ for a
theta-characteristic $\theta$ of the curve $X$. For higher rank $r$
the bundle $V$ equals the symmetric power $\Sym^{r-1}(\cG)$ up to
tensor product by a line bundle. In particular, the bundle $V$ is
non-semistable and we shall denote by $\sP_r^{oper}$ its
Harder-Narasimhan polygon. Opers of rank two appeared in
characteristic $p>0$ in the work of S.~Mochizuki (see \cite{Mo}),
where they appeared as indigenous bundles.  An oper is
\emph{nilpotent} if the underlying connection has nilpotent $p$-curvature (of
exponent $\leq$ rank of the oper). An oper is \emph{dormant} if the
underlying connection has $p$-curvature zero (this terminology is
due to S.~Mochizuki). By definition any dormant oper is nilpotent.
Let $\Op_{\PGL(r)}$ denote the moduli of $\PGL(r)$-opers, i.e. rank-$r$ opers
$(V, \nabla, V_\mydot)$  up to tensor product by rank-$1$ local systems (see section
3.2 for the precise definition) and 
let $\mathrm{Nilp}_r(X) \subset \Op_{\PGL(r)}$ be the subscheme of nilpotent $\PGL(r)$-opers on $X$
(see section 6.1 for the precise definition).
In \cite{Mo}, Mochizuki proved  a foundational result: the scheme of
nilpotent, indigenous bundles is finite (i.e $\mathrm{Nilp}_2(X)$ is finite). 
This result lies at the
center of Mochizuki's $p$-adic uniformization program.  Opers of
higher rank in positive characteristic $p>0$ also appeared in
\cite{JRXY}. We will take a slightly more general definition of
opers by allowing non-complete flags. With our definition the triple
$(F^*(F_*(Q)), \nabla^{can}, V_\mydot)$ associated to any vector
bundle $Q$ over $X$, as introduced in \cite{JRXY} (we note that in
\cite{JRXY} this was shown under assumption that $F_*(Q)$ is stable
if $Q$ is stable; this restriction was removed in
\cite{lange-pauly08} for $Q$ of rank one, and more recently in all
cases by \cite{sun06} see Theorem~\ref{exampleoper}), is an oper,
even a dormant oper.

Our first result is the higher rank case of the finiteness result of
\cite{Mo} --- that the locus of nilpotent $\PGL(r)$-opers is finite (see
Theorem~\ref{nilpfinite}):
\begin{thm}\label{theorem2}
The scheme $\mathrm{Nilp}_r(X)$ is finite.
\end{thm}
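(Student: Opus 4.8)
The plan is to identify $\Nilp_r(X)$ with the zero-fibre of the Hitchin--Mochizuki morphism and to deduce its finiteness from the finiteness of that morphism. By the structure theory of opers (\cite{BD1}), the underlying bundle $V$ of a $\PGL_r$-oper is rigid---it equals $\Sym^{r-1}(\cG)$ up to twist---so the space $\Op_r(X)$ of all such opers is an affine space, a torsor under $\bigoplus_{i=2}^r H^0(X,\Omega_X^{\otimes i})$; by Riemann--Roch it has dimension $(r^2-1)(g-1)$. Assigning to an oper $(V,\nabla,V_\mydot)$ the tuple of characteristic coefficients of its $p$-curvature $\psi(V,\nabla)\in H^0(X,\End(V)\otimes F^*\Omega_X)$ defines the Hitchin--Mochizuki morphism
\[
\HM \colon \Op_r(X) \lra \cB_r(X) := \bigoplus_{i=2}^r H^0\big(X,(F^*\Omega_X)^{\otimes i}\big).
\]
By definition an oper is nilpotent precisely when its $p$-curvature is nilpotent, i.e. when all these coefficients vanish; thus $\Nilp_r(X)=\HM^{-1}(0)$ as closed subschemes of $\Op_r(X)$. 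Since $\Op_r(X)$ is of finite type over $k$, it suffices to prove that this fibre is zero-dimensional, for then $\Nilp_r(X)$ is an Artinian, hence finite, $k$-scheme.

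The core of the argument is therefore to show that $\HM$ is finite onto its image. I would do this by a homogeneity (symbol) argument. Both $\Op_r(X)$ and $\cB_r(X)$ carry $\mathbb{G}_m$-actions rescaling the $i$-th oper parameter and the $i$-th Hitchin coefficient, and with respect to a suitable filtration the morphism $\HM$ acquires a well-defined leading term, its symbol $\sigma(\HM)$, which is genuinely $\mathbb{G}_m$-homogeneous---the failure of $\HM$ itself to be homogeneous reflecting the lower-order contributions to the $p$-curvature coming from the restricted ($p$-power) structure. For a homogeneous morphism of cones with positive weights, the fibre over the vertex reducing set-theoretically to the vertex forces the morphism to be finite onto its image, and if $\sigma(\HM)$ is finite then so is $\HM$. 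I am thus reduced to proving that $\sigma(\HM)^{-1}(0)=\{0\}$, i.e. that the only oper parameter whose $p$-curvature symbol is nilpotent is the trivial one.

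The main obstacle is precisely this non-degeneracy of the symbol, and it is here that the hypothesis that $p$ is large in terms of $r,g$ enters. The symbol of $\HM$ is computed fibrewise: it is induced by the $p$-power map on the principal part of the $p$-curvature, which over each point of $X$ becomes a map between the adjoint quotient $\mathfrak{c}=\mathfrak{sl}_r/\!/\SL_r$ and its Frobenius twist. The global statement $\sigma(\HM)^{-1}(0)=\{0\}$ then follows from the pointwise statement that this $p$-power map on $\mathfrak{c}$ is finite with zero-fibre the origin---a linear-algebra computation on regular-nilpotent (companion) matrices of Vandermonde type, whose non-vanishing holds once $p$ exceeds an explicit bound depending on $r$. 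Granting this, $\HM$ is finite onto its image, its fibres are finite, and in particular $\Nilp_r(X)=\HM^{-1}(0)$ is finite, as claimed.
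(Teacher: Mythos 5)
Your reduction of the theorem to a statement about the Hitchin--Mochizuki morphism, and the identification $\Nilp_r(X)=\HM^{-1}(0)$ inside the affine space $\Op_{\PGL(r)}(X)$, is exactly the paper's starting point. But from there the two arguments diverge, and yours has a genuine gap at its central step. You propose to prove finiteness of $\HM$ by extracting a $\mathbb{G}_m$-homogeneous symbol $\sigma(\HM)$ and showing $\sigma(\HM)^{-1}(0)=\{0\}$ via a ``pointwise'' computation of a $p$-power map on the adjoint quotient. That non-degeneracy of the symbol is not a routine linear-algebra verification: in your approach it carries the entire content of the theorem, and you do not carry it out. The $p$-curvature of an oper is not computed fibrewise from the oper parameters --- it involves derivatives of those parameters and global cancellations --- so even the existence of a well-defined leading term of the asserted form, let alone the triviality of its zero locus, requires a substantial argument that is nowhere supplied. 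Moreover your route would only yield the theorem for $p$ larger than an explicit bound depending on $r$, whereas the statement carries no such hypothesis (the paper explicitly remarks that Theorem \ref{nilpfinite} holds in any characteristic).

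The paper avoids the symbol computation entirely. Since $\Nilp_r(X)$ is a closed subscheme of an affine space it is affine, so finiteness is equivalent to properness over $\Spec(k)$, and properness is checked by the valuative criterion (Proposition \ref{ssreduction}): given a nilpotent oper over the generic point of a discrete valuation ring, the underlying local system is stable (Proposition \ref{stabilityoper}), so by the semistable-reduction result of Laszlo--Pauly it extends with semistable special fibre having nilpotent $p$-curvature; Shatz's specialization theorem forces $\sP_{V_s}\succcurlyeq\sP_r^{oper}$, Theorem \ref{thmoperdominant}~(1) gives the reverse inequality, and Theorem \ref{thmoperdominant}~(2) then shows that the special fibre, with its induced flag, is again an oper. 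To repair your argument you would either have to actually establish the finiteness of the leading term of $\HM$ in oper coordinates --- a hard and unproved claim --- or replace that step by the properness argument just described.
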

We note that Mochizuki allows curves with log structures, but we do
not. Theorem \ref{theorem2} is proved by proving that
$\mathrm{Nilp}_r(X)$ is both affine and proper. One gets affineness
by constructing $\mathrm{Nilp}_r(X)$ as the fiber over $0$ of what
we call the \emph{Hitchin-Mochizuki morphism} (see section 3.3)
$$ \mathrm{HM}: \Op_{\PGL(r)} \lra \bigoplus_{i=2}^r H^0(X, (\Omega_X^1)^{\otimes i}) ,
\qquad (V, \nabla, V_\mydot) \mapsto \left[ \mathrm{Char} \ \psi(V,
\nabla)  \right]^{\frac{1}{p}},$$ which associates to an oper $(V,
\nabla, V_\mydot)$ the $p$-th root of the coefficients of the
characteristic polynomial of the $p$-curvature. That the scheme $\Op_{\PGL(r)}$ of
$\PGL(r)$-opers is affine is due to
Beilinson-Drinfeld (see \cite{BD1}).

This morphism is a generalization of the morphism, first introduced
and studied by S. Mochizuki (see \cite[page 1025]{Mo}) in the rank
two case and for families of curves with logarithmic structures, and
called the \emph{Verschiebung}. We consider a generalization of this
to arbitrary rank and we call it the Hitchin-Mochizuki morphism. Our
approach to Theorem \ref{theorem2} is different from that of
\cite{Mo,Mo2}. The key point is:  Mochizuki's Verschiebung is
really a Hitchin map with affine source and target. From this view point
finiteness is equivalent to properness, and we note that Hitchin
maps (all of them) share a key property: properness. Thus one sees
from this that finiteness of indigenous bundles is a rather natural
consequence of being a fibre of a Hitchin(-Mochizuki) map. Thus it
remains to prove, exactly as is the case with usual Hitchin morphism
(\cite{F}, \cite{N}), that the Hitchin-Mochizuki morphism
$\mathrm{HM}$ is proper (Remark \ref{HMproper}).  We mention that
both sides of the morphism $\mathrm{HM}$ are affine and of the same
dimension.  At any rate, from this point of view we reduce the proof
of finiteness to proving a suitable valuative criterion. This is
accomplished by showing that the underlying local system of an oper
is stable (Proposition \ref{stabilityoper}) and by proving a
semistable reduction theorem for nilpotent opers (Proposition
\ref{ssreduction}). We note that the finiteness result for
indigenous bundles was proved by S. Mochizuki in the rank two case
(see \cite{Mo}) by a different method which does not seem (at the
moment) to lend itself to generalization to higher ranks as it uses
some rather non-linear properties of $p$-curvature and some
peculiarities of rank $2$. The fibre over zero of the
Hitchin-Mochizuki morphism consists of nilpotent opers --- this is the
analogue of the global nilpotent cone of the usual Hitchin morphism.
Since dormant opers (i.e. opers with $p$-curvature zero) are
nilpotent, we deduce from Theorem~\ref{theorem2} that the scheme of
dormant opers with fixed determinant is finite. In the rank two,
genus two case it is possible to compute by various methods the
length of this scheme (see \cite{Mo2}, \cite{lange-pauly08} or
\cite{O2}). Our finiteness result Theorem~\ref{theorem2} naturally
raises the problem of counting the nilpotent and dormant opers for
general $r,p$ and $g$. In \cite{Mo} Mochizuki also showed among
other results that $\mathrm{Nilp}_2(X)$ is a scheme of length
$p^{\dim \Op_{\PGL(2)}}$ for any genus $g$. The analogous result for nilpotent opers of 
higher ranks has been proved recently in \cite{chen}.

The key technical tool in the proof of  Theorem~\ref{theorem2} is
the following result which controls the Harder-Narasimhan polygon of
Frobenius-destabilized bundles (also see
Theorem~\ref{thmoperdominant}).

\begin{thm}\label{theorem1}
Let $(V, \nabla)$ be a semistable local system of degree $0$ and
rank $r$ over the curve $X$. Let $V_\mydot^{HN}$ denote the
Harder-Narasimhan filtration of $V$. Then
\begin{enumerate}
\item
the Harder-Narasimhan polygon $\sP_V$ of $V$ is on or below the
oper-polygon $\sP_r^{oper}$.
\item  we have equality $\sP_V = \sP_r^{oper}$  if and only if $(V,\nabla,V_\mydot^{HN})$
is an oper.
\end{enumerate}
\end{thm}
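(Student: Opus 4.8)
The plan is to run a slope analysis of the Harder--Narasimhan filtration $0 = V_0 \subset V_1 \subset \cdots \subset V_\ell = V$, using the connection only through the $\mathcal{O}_X$-linear second fundamental forms it produces. Write $Q_i = V_i/V_{i-1}$ for the semistable graded pieces, of slope $\mu_i$ with $\mu_1 > \cdots > \mu_\ell$, and set $c = 2g-2 = \deg \Omega_X^1 > 0$. The first step is the slope-gap inequality $\mu_i - \mu_{i+1} \le c$ for every inner step $0 < i < \ell$. Since the partial sums of a strictly decreasing slope sequence of total degree $0$ are positive, $\deg V_i > 0$, so $\mu(V_i) > \mu(V) = 0$; were $V_i$ preserved by $\nabla$ it would destabilize the local system, contradicting semistability of $(V,\nabla)$. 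Hence the $\mathcal{O}_X$-linear second fundamental form $\sigma_i \colon V_i \xrightarrow{\nabla} V\otimes\Omega_X^1 \twoheadrightarrow (V/V_i)\otimes\Omega_X^1$ is nonzero. As a nonzero morphism forces $\mu_{\min}(\text{source}) \le \mu_{\max}(\text{target})$, and here $\mu_{\min}(V_i) = \mu_i$ while $\mu_{\max}((V/V_i)\otimes\Omega_X^1) = \mu_{i+1}+c$, I obtain $\mu_i - \mu_{i+1} \le c$.

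For part (1) I would convert this into the polygon comparison by a majorization argument. List the slopes with multiplicity as a non-increasing sequence $\lambda_1 \ge \cdots \ge \lambda_r$ of sum $0$, so that consecutive gaps are $\le c$, and let $\nu_1 > \cdots > \nu_r$ be the oper slopes, the arithmetic progression of common difference $c$ and sum $0$ whose associated polygon is $\sP_r^{oper}$. Then $h_k := \nu_k - \lambda_k$ satisfies $h_k - h_{k+1} = c - (\lambda_k - \lambda_{k+1}) \ge 0$, so $(h_k)$ is non-increasing with $\sum_k h_k = 0$; consequently every partial sum $\sum_{k \le m} h_k$ is $\ge 0$, which is exactly the statement that $\sP_V$ lies on or below $\sP_r^{oper}$.

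For part (2), equality of the polygons forces $\lambda_k = \nu_k$ for all $k$: the filtration is complete ($\ell = r$, all $Q_i$ line bundles) and $\mu_i - \mu_{i+1} = c$ exactly. I would then recover the oper axioms for $(V,\nabla,V_\mydot^{HN})$. Griffiths transversality $\nabla(V_i) \subseteq V_{i+1}\otimes\Omega_X^1$ follows because the induced $\mathcal{O}_X$-linear map $V_i \to (V/V_{i+1})\otimes\Omega_X^1$ has source with every slope $\ge \mu_i = \mu_{i+2}+2c$, strictly above the target top slope $\mu_{i+2}+c$, hence vanishes (the boundary case $i=\ell-1$ being trivial). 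Transversality makes the induced $\bar\nabla_i \colon Q_i \to Q_{i+1}\otimes\Omega_X^1$ well defined, a map of line bundles of equal degree; if some $\bar\nabla_i$ vanished then $V_i$ would be $\nabla$-invariant, again contradicting semistability, so each $\bar\nabla_i$ is an isomorphism, which is precisely the oper condition. The converse is immediate: for an oper the isomorphisms $\bar\nabla_i$ place the graded slopes in an arithmetic progression of step $c$ with a complete flag, so $\sP_V = \sP_r^{oper}$.

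The main obstacle I anticipate is the bookkeeping of the second fundamental forms in the equality case: making precise that the relevant induced maps on graded quotients are well defined and $\mathcal{O}_X$-linear, and invoking carefully the vanishing of $\mathrm{Hom}$ between bundles whose Harder--Narasimhan slopes are separated. The slope-gap inequality and the majorization step are robust; the delicate point is extracting Griffiths transversality purely from the sharp slope equalities without circularity, since transversality at level $i-1$ is what one needs to descend the form at level $i$.
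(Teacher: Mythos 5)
Your proof is correct, and part (2) coincides with the paper's argument: Griffiths transversality is extracted from the vanishing of $\Hom$ between bundles with separated Harder--Narasimhan slopes, and the induced maps on graded quotients are nonzero (by semistability of the local system, since a vanishing second fundamental form would make the destabilizing subbundle $\nabla$-invariant) hence isomorphisms of line bundles of equal degree; your worry about circularity is unfounded, since transversality is proved at every level by an independent slope estimate before any graded map is formed. Where you genuinely diverge is part (1). The paper takes the slope-gap bound $\mu_i - \mu_{i+1} \leq 2g-2$ as a black box from the literature (its Lemma \ref{inequalityslopelocalsystem}) and then establishes each vertex inequality $\deg(V_i) \leq (g-1)(n_1+\cdots+n_i)(n_{i+1}+\cdots+n_l)$ by a decreasing induction involving repeated weighted summation of the inequalities $(\cE_k)$ and a substitution $m_i = n_i - 1 \geq 0$; this occupies most of its proof. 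You instead reprove the slope-gap bound directly from the nonvanishing of the second fundamental forms (legitimate, and the same mechanism underlying the cited lemma) and then replace the entire induction by a majorization argument: listing slopes with multiplicity, the difference sequence $h_k = \nu_k - \lambda_k$ against the oper arithmetic progression is non-increasing with zero sum, so its partial sums are nonnegative, which is precisely the polygon comparison. This is shorter, conceptually transparent, and makes visible the analogy with Newton-above-Hodge arguments that the introduction alludes to; the paper's computation buys nothing extra here. One small caveat, which is a feature of the theorem's statement rather than of your proof: the ``if'' direction of (2) requires the oper to be of type $1$ (complete flag), as you implicitly assume --- a type-$q$ oper with $q>1$ whose flag is the Harder--Narasimhan filtration has polygon strictly below $\sP_r^{oper}$.
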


To better appreciate this result, it is best to put it  in proper
perspective. We note that bundles with connections of $p$-curvature
zero are crystals (in the sense of Grothendieck --- though not
$F$-crystals) and this theorem is, in a sense, an analog of Mazur's
theorem (``Katz' conjecture'') on Hodge and Newton polygons of
$F$-crystals see \cite{mazur73} --- the oper polygon has integer slopes
and plays the role of the Hodge polygon (and if we apply a 180
degree counter clockwise rotation to the polygon plane, then our
result says that the rotated Harder-Narasimhan polygon lies on or
above the oper-polygon and both have the same endpoints).

As in the case of Mazur's Theorem (see \cite{mazur73}), our result is equivalent to a
list of inequalities for the slopes of the graded pieces of the
Harder-Narasimhan filtration.  We establish the result by proving
that the relevant list of inequalities hold using  induction and
by combining and refining some well-known inequalities (see
\cite{shepherd}, \cite{sun99} or \cite{laszlo-pauly}) on the slopes
of the successive quotients of $V_\mydot^{HN}$. As a particular case
of Theorem~\ref{theorem1} we obtain that, when the semistable bundle
$E$ varies, the Harder-Narasimhan polygon of $F^*(E)$ is maximal if
and only if $(F^*(E), \nabla^{can}, F^*(E)_\mydot^{HN})$ is a
dormant oper.

We now turn to describing our results on the Frobenius instability
locus in the moduli space of semi-stable vector bundles on curves.
As pointed out above, the problem of describing the
Frobenius-destabilized locus is one of the least understood aspects of the theory
of vector bundles on curves.  Our next result, provides a
construction of all Frobenius destabilized stable bundles. We state
the result in its weakest form:
\begin{thm} \label{QuotFrob-weak}
Let $X$ be a smooth, projective curve of genus $g\geq 2$ over an
algebraically closed field $k$ of characteristic $p>0$. If $p >
\crg$, then we have
\begin{enumerate}
\item Every stable and Frobenius-destabilized vector bundle $V$ of rank
$r$ and slope $\mu(V)=\mu$ over $X$ is a subsheaf $V\hookrightarrow
F_*(Q)$ for some stable vector bundle $Q$ of rank  $\rk(Q)<r$ and
$\mu(Q) < p\mu$.
\item Conversely, given a semistable vector bundle $Q$ with $\rk(Q)<r$ and $\mu(Q) < p\mu$, every
subsheaf $V\hookrightarrow F_*(Q)$ of rank $\rk(V)=r$ and slope
$\mu(V)=\mu$ is semistable and destabilized by Frobenius.
\end{enumerate}
\end{thm}

It should be remarked that (especially after \cite{JRXY} and
\cite{lange-pauly08}) it has been expected that  every Frobenius-destabilized stable
bundle $V$ should occur as a subsheaf of $F_*(Q)$ for a suitably
chosen $Q$. The main difficulty in realizing this expectation was
the injectivity $V\hookrightarrow F_*(Q)$ for the canonical choice
of $Q$ (the one given by the minimal slope quotient of the
instability flag of $F^*(V)$). We are able to resolve this
difficulty, and it should be remarked that opers play a critical
role in this resolution as well, by providing a uniform bound on the
slopes of subbundles of $F_*(Q)$ (for any stable $Q$) (see
Proposition~\ref{slopesubbundlesFQ}). This bound is finer than the
bounds which exists in literature (see
\cite{lange-pauly08},\cite{JRXY} and \cite{joshi04}). Our method is
to use a method of X. Sun (see \cite{sun06}), especially an
important formula (see eq~\ref{formulasun}) due to him, relating the
slope of a subbundle of $W\subset F_*(Q)$ to the slopes of the
graded pieces of the filtration induced by the oper filtration. We
show that Sun's formula reduces the problem of obtaining a bound to
a numerical optimization problem. We are able to carry out this
optimization effectively (for $p$ bigger than an explicit constant
depending on the degree and the rank, see the statement of the
theorem for details). The final step is to show that every stable
Frobenius-destabilized bundle $V$ is, in fact, a subsheaf of
$F_*(Q)$ for a $Q$ of some rank $\leq\rk(V)-1$ and $\deg(Q)=-1$.
This is the optimal degree allowed from Harder-Narasimhan flag slope
considerations and this is accomplished in
Theorem~\ref{QuotFrobdegree0}.

This paves the way for describing the instability locus $\cJ(r)$ in
terms of a suitable Quot-scheme. We show that under the assumption
that $p > \crg$,
there is a morphism  $\pi$ (see
Theorem~\ref{locusJ} for the precise statement) from a disjoint union of
relative Quot-schemes to $\cJ(r)$, which surjects onto the stable
part of the instability locus $\cJ^s(r)$.  We expect that $\pi$ is
birational over some irreducible component of $\cJ(r)$, but we are
not able to show that for $r > 2$ and we hope to return to it at a
later work. We also show (Proposition \ref{bijFrobopersQuot}) that
the set of dormant opers equals a relative Quot-scheme
$$\alpha : \QQuot(r,0) \rightarrow  \Pic^{-(r-1)(g-1)}(X)$$
over the Picard variety of line bundles over $X$ of degree
$-(r-1)(g-1)$ such that the fiber $\alpha^{-1}(Q)$ over $Q \in
\Pic^{-(r-1)(g-1)}(X)$ equals the Quot-scheme
$\mathrm{Quot}^{r,0}(F_*(Q))$ parameterizing subsheaves of degree
$0$ and rank $r$ of the direct image $F_*(Q)$. In particular, since
$\mathrm{Quot}^{r,0}(F_*(Q)) \not= \emptyset$ (Proposition
\ref{nonemptyQuot}), we obtain the existence of dormant opers.

As mentioned earlier, the instability locus $\cJ(r)$, and especially
$\cJ^s(r)$, is equipped with a stratification, and one would like a
concrete description of the instability strata. This is at the
moment only understood completely for $p=2,r=2$ (\cite{JRXY}),
though there are partial results for $p\leq3,g\leq 3,r\leq 3$, or
$p\geq 2g, r\leq 2$, in the references cited at the end of the first
paragraph of this paper. Our first result (Theorem~\ref{theorem2})
identifies the minimal dimensional stratum of this stratification on
$\cJ^s(r)$. The finiteness result Theorem (\ref{theorem2}) provides
the following description of the minimal dimensional stratum which
corresponds to the highest Harder-Narasimhan polygon: the stratum
corresponding to the oper-polygon is zero dimensional. We call this
the \emph{operatic locus}. The operatic locus is characterized as
the locus of semi-stable bundles whose Frobenius pull-back and its
Harder-Narasimhan filtration together with its canonical connection
and is a dormant oper. The finiteness result (Theorem~\ref{theorem2})
shows that the operatic locus is finite (Theorem~\ref{opersquot}). As
applications of the relationship between opers and Quot-schemes, we
mention the following results. Firstly, for rank two, we show
(Theorem~\ref{dimJ2}) that any irreducible component of $\cJ(2)$
containing a dormant oper has dimension $3g-4$, which completes some
results on $\dim \cJ(2)$ for general curves due to S. Mochizuki.
Secondly, we deduce from Theorem~\ref{theorem2} that $\dim
\mathrm{Quot}^{r,0}(F_*(Q)) = 0$ (its expected dimension) for any
line bundle $Q$ of degree $-(r-1)(g-1)$.

\subsection{Acknowledgements}
The first author wishes to thank the University of Montpellier II for financial support
of a research visit during spring 2008. We would like to thank S. Brochard, M. Gr\"ochenig, P. Newstead, V. Mehta and
B. To\"en for helpful discussions. We thank the referee for many corrections and suggestions which have improved the readability of this paper. We also thank R. Bezrukavnikov for informing us of another approach to prove 
finiteness of the Hitchin-Mochizuki morphism. This approach has been realized in the preprint \cite{chen} (2013).

\bigskip

In the early 1990s Vikram B. Mehta suggested the problem of studying
Frobenius destabilized bundles to the first author and encouraged him to
think about it. Both authors acknowledge Vikram's mathematical
tutelage, of many years, with great pleasure and deepest gratitude.

\newpage
\tableofcontents

\section{Generalities}
\subsection{The Frobenius morphism and $p$-curvature}
\subsubsection{Definitions}
Let $X$ be a smooth projective curve of genus $g \geq 2$ defined over an algebraically
closed field $k$ of characteristic $p > 0$. Let $F : X \ra X$ be the absolute
Frobenius morphism  of the curve $X$.
One also has at one's disposal the relative Frobenius morphism $X\to X^{(1)}=X\times_{k,\sigma}k$,
where $\sigma: k \rightarrow k$ denotes the Frobenius of the base field $k$.
For notational simplicity we will work with the absolute Frobenius morphism as opposed to the relative Frobenius morphism.  As we work over an algebraically closed field, one has an equivalence of categories of coherent sheaves on $X$ and $X^{(1)}$ induced by the natural 
morphism $X^{(1)}\to X$, and  the distinction between relative and absolute Frobenius is immaterial for our purposes. Occasionally 
we will (when we make relative constructions) invoke the relative Frobenius morphism and we will say so. We hope that the reader will 
not find the change too distressing (when it is needed). The full yoga of the two morphisms is spelled out in many places 
(for instance in \cite{Katz}). We denote by $\Omega_X^{1}$ and $T_X$ the canonical bundle and the tangent bundle of the curve $X$.

Given a local system $(V, \nabla)$ over $X$, i.e. a pair $(V, \nabla)$ consisting of a
vector bundle $V$ over $X$ and a connection $\nabla$ on $V$, we associate (see e.g. \cite{Katz})
the $p$-curvature morphism
$$ \psi(V, \nabla) : T_X \lra \End(V), \qquad D \mapsto \nabla(D)^p - \nabla(D^p).$$
Here $D$ denotes a local vector field, $D^p$ its $p$-th power and $\End(V)$ denotes the
sheaf of $\cO_X$-linear endomorphisms of $V$.

\subsubsection{Properties of the $p$-curvature morphism }
By \cite{Katz} Proposition 5.2 the $p$-curvature morphism  is
$\cO_X$-semi-linear, which means that it corresponds to an
$\cO_X$-linear map, also denoted by $\psi(V, \nabla)$
\begin{equation} \label{pcurvature}
 \psi(V,\nabla) : F^* T_X \longrightarrow \End(V).
\end{equation}
In the sequel we will always consider $p$-curvature maps as $\cO_X$-linear maps
\eqref{pcurvature}.
Given local systems $(V, \nabla_V)$ and $(W, \nabla_W)$ over $X$, we can naturally
associate the local systems given by their tensor product $(V \otimes W, \nabla_V \otimes
\nabla_W)$ and the determinant $(\det V, \det \nabla_V)$.

\begin{prop} \label{proppcurv}
We have the following relations on the $p$-curvature maps
\begin{itemize}
\item[(i)] $\psi(\cO_X, d) = 0$, where $d : \cO_X \ra \Omega_X^1$ is the differentiation operator.
\item[(ii)] $\psi(V \otimes W, \nabla_V \otimes \nabla_W) = \psi(V, \nabla_V) \otimes \Id_W +
\Id_V \otimes \psi(W, \nabla_W)$.
\item[(iii)] $\psi(\det V, \det \nabla_V)  = \mathrm{Tr} \circ \psi(V, \nabla_V)$, where $\mathrm{Tr} : \End(V) \lra \cO_X$ denotes the trace map.
\end{itemize}
\end{prop}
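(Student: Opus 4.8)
The plan is to verify each of the three identities by computing both sides as $\cO_X$-linear maps $F^*T_X \to \End(-)$, working locally with respect to a vector field $D$ and invoking the definition $\psi(V,\nabla)(D) = \nabla(D)^p - \nabla(D^p)$, after which semilinearity in $D$ packages the formulas into the claimed $F$-linear statements. For part (i), the trivial bundle $(\cO_X, d)$ has $d(D)$ acting as the derivation $D$ itself on functions; since $D^p$ is again a derivation (the $p$-th power of a derivation in characteristic $p$ is a derivation, by Jacobson's formula / Hochschild's identity), the difference $D^p - D^p$ of the two ways of reading the formula vanishes. More precisely, $\nabla(D) = D$ acting on $\cO_X$ satisfies $\nabla(D)^p = D^p = \nabla(D^p)$ as operators, so $\psi(\cO_X,d) = 0$.

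For part (ii), I would recall that on a tensor product the connection is $\nabla_V \otimes \nabla_W$, meaning $(\nabla_V \otimes \nabla_W)(D) = \nabla_V(D) \otimes \Id_W + \Id_V \otimes \nabla_W(D)$. The key observation is that the two operators $A := \nabla_V(D) \otimes \Id_W$ and $B := \Id_V \otimes \nabla_W(D)$ commute on $V \otimes W$. Hence I can apply the freshman's dream in the commutative setting, $(A+B)^p = A^p + B^p$ in characteristic $p$, to compute $(\nabla_V \otimes \nabla_W)(D)^p = \nabla_V(D)^p \otimes \Id_W + \Id_V \otimes \nabla_W(D)^p$. Subtracting the corresponding $(\nabla_V \otimes \nabla_W)(D^p) = \nabla_V(D^p) \otimes \Id_W + \Id_V \otimes \nabla_W(D^p)$ and regrouping immediately yields $\psi(V \otimes W) = \psi(V) \otimes \Id_W + \Id_V \otimes \psi(W)$. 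The main subtlety here, and the step I expect to require the most care, is justifying that $A$ and $B$ genuinely commute as $p$-linear differential operators on the tensor product (they act on independent tensor factors, so this is clear once the Leibniz conventions are fixed) and that the additive-polynomial identity $(A+B)^p = A^p+B^p$ is applied only to commuting operators.

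For part (iii), I would use that the determinant connection is defined so that $(\det\nabla_V)(D)$ acts on $\det V = \Lambda^r V$ as the induced derivation, which on decomposable sections $v_1 \wedge \cdots \wedge v_r$ is $\sum_i v_1 \wedge \cdots \wedge \nabla_V(D)v_i \wedge \cdots \wedge v_r$; this is exactly $\mathrm{Tr}$ applied at the level of the operator in the sense that the induced action of an endomorphism $\phi$ of $V$ on $\det V$ has ``derivative'' $\mathrm{Tr}(\phi)$. Applying part (ii) inductively, or directly computing the $p$-curvature of the induced connection on $\Lambda^r V$ together with the already-established additivity, reduces the claim to the linear-algebra fact that the $p$-curvature of the top exterior power records the trace of the $p$-curvature on $V$. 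I would either cite the compatibility of $p$-curvature with the functor $\det$ as a formal consequence of (ii) (noting $\det V$ is a quotient of $V^{\otimes r}$ cut out by $\nabla$-stable relations), or verify it directly on a local frame using $\psi(\det V, \det\nabla_V)(D) = (\det\nabla_V)(D)^p - (\det\nabla_V)(D^p)$ and the fact that the trace of a $p$-th power identity for the induced operators matches $\mathrm{Tr}\circ\psi(V,\nabla_V)(D)$.

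Throughout, the one genuine obstacle is bookkeeping the distinction between the semilinear and linear versions of $\psi$: all computations are most naturally carried out with the semilinear map $D \mapsto \nabla(D)^p - \nabla(D^p)$, so I would perform the algebra there and only at the end observe that the identities, being $\cO_X$-linear in the target and compatible with the Frobenius twist in the source, descend to the asserted relations between the linearized maps \eqref{pcurvature}. None of the three parts requires more than the characteristic-$p$ binomial identity for commuting operators together with the standard Leibniz formulas defining the tensor and determinant connections.
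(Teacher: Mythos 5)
Your proposal is correct and follows essentially the same route as the paper: part (i) is immediate, part (ii) uses that the two terms of $\nabla_V(D) \otimes \Id_W + \Id_V \otimes \nabla_W(D)$ commute so that the characteristic-$p$ identity $(A+B)^p = A^p + B^p$ applies, and part (iii) is obtained by iterating (ii) on $V^{\otimes r}$, descending along the quotient $V^{\otimes r} \to \Lambda^r V = \det V$, and checking the resulting identity on a local frame. The one subtlety you flag --- that $A$ and $B$ individually are only operators after fixing conventions (e.g.\ working over $k$ or locally), while only their sum is the connection on $V \otimes_{\cO_X} W$ --- is treated at the same level of detail in the paper, so there is no substantive divergence.
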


\begin{proof}
Part (i) is trivial.
Part (ii) follows from the definition of the $p$-curvature and the fact that the
two terms of $\nabla_{V \otimes W} (D) = \nabla_V(D) \otimes  \Id_W +
\Id_V \otimes \nabla_W(D)$ commute in the ring $\End_k(V)$ of $k$-linear
endomorphisms of $V$. As for part (iii), we iterate the formula proved in (ii) in order
to obtain an expression for the $p$-curvature $\psi(V^{\otimes r}, \nabla_{V^{\otimes r}})$
with $r = \rk(V)$ and show that the $F^* \Omega_X^1$-valued endomorphism
$\psi(V^{\otimes r}, \nabla_{V^{\otimes r}})$ of $V^{\otimes r}$
induces an endomorphism on the quotient $V^{\otimes r} \lra \Lambda^r V = \det V$. The formula in (iii)
then follows by taking a basis of local sections of $V$ and comparing both sides of the
equality.
\end{proof}

Let $(L, \nabla_L)$ be a local system of rank $1$, i.e. $L$ is a line bundle.
Let $r$ be an integer not divisible by $p$. We say that
$(L, \nabla_L)$  is an $r$-torsion local system if $(L^{\otimes r}, \nabla_{L^{\otimes r}}) =
(\cO_X, d)$. Note that for an $r$-torsion line bundle $L$ there exists a unique
connection $\nabla_L$ on $L$ such that the local system $(L, \nabla_L)$ is
$r$-torsion.

\begin{prop} \label{pcurvtorsion}
Let $(L, \nabla_L)$ be an $r$-torsion local system. If $p$ does not divide $r$, then
$$ \psi (L, \nabla_L) = 0.$$
\end{prop}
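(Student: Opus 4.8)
The plan is to reduce the statement to the additivity of the $p$-curvature under tensor products recorded in Proposition \ref{proppcurv}(ii), and to exploit that the hypothesis $p \nmid r$ makes $r$ invertible in $k$. Since $L$ has rank one, the key observation is that its $p$-curvature is ``scalar'': the endomorphism sheaf $\End(L)$ is canonically isomorphic to $\cO_X$, so that $\psi(L, \nabla_L) : F^* T_X \to \End(L) \cong \cO_X$ is nothing but a global section $\omega \in H^0(X, F^* \Omega_X^1)$. The goal is then to show $\omega = 0$.

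First I would compute $\psi(L^{\otimes r}, \nabla_{L^{\otimes r}})$ by iterating Proposition \ref{proppcurv}(ii). Writing $\omega_n$ for the section of $F^*\Omega_X^1$ attached to $\psi(L^{\otimes n}, \nabla_{L^{\otimes n}})$ under the identification $\End(L^{\otimes n}) \cong \cO_X$, part (ii) applied with $V = L^{\otimes(n-1)}$ and $W = L$ gives the recursion $\omega_n = \omega_{n-1} + \omega_1$, since each of the two summands $\psi(L^{\otimes(n-1)}) \otimes \Id_L$ and $\Id_{L^{\otimes(n-1)}} \otimes \psi(L)$ acts as a scalar on the line bundle $L^{\otimes n}$. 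With $\omega_1 = \omega$ this yields $\omega_r = r \omega$.

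Next, the torsion hypothesis $(L^{\otimes r}, \nabla_{L^{\otimes r}}) = (\cO_X, d)$ together with Proposition \ref{proppcurv}(i) gives $\psi(L^{\otimes r}, \nabla_{L^{\otimes r}}) = \psi(\cO_X, d) = 0$, hence $r\omega = 0$ in $H^0(X, F^*\Omega_X^1)$. Finally, since $p$ does not divide $r$, the integer $r$ is a unit in $k$, so multiplication by $r$ is injective on $H^0(X, F^*\Omega_X^1)$, and we conclude $\omega = 0$, that is $\psi(L, \nabla_L) = 0$.

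As for difficulty, there is essentially no serious obstacle here: the only genuine inputs are the multiplicativity formula of Proposition \ref{proppcurv} and the invertibility of $r$ in $k$. The sole point requiring a little care is the canonical identification $\End(L^{\otimes n}) \cong \cO_X$, under which the tensor-product formula becomes plain addition of sections; this is precisely what makes the rank-one case collapse to the scalar computation above, and it is where the hypothesis $p \nmid r$ does its work.
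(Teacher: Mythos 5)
Your proposal is correct and follows exactly the paper's argument: iterate Proposition \ref{proppcurv}(ii) to get $\psi(L^{\otimes r}, \nabla_{L^{\otimes r}}) = r\,\psi(L,\nabla_L)$, equate this with $\psi(\cO_X,d)=0$ via part (i), and divide by $r$ using $p \nmid r$. The only difference is that you spell out the identification $\End(L^{\otimes n})\cong\cO_X$, which the paper leaves implicit.
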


\begin{proof}
By Proposition \ref{proppcurv} (i) and (ii) we know that
$$ 0 = \psi(\cO_X, d) = \psi(L^{\otimes r}, \nabla_{L^{\otimes r}}) =
r \psi(L, \nabla_L).$$
Hence, if $p$ does not divide $r$, we obtain the result.
\end{proof}

\subsubsection{Cartier's theorem}
We now state a fundamental property of the $p$-curvature saying
roughly that the tensor $\psi(V, \nabla)$ is the obstruction to
descent of the bundle $V$ under the Frobenius map $F : X \ra X$.

\begin{thm}[\cite{Katz} Theorem 5.1] \label{Cartierthm}
\begin{enumerate}
\item Let $E$ be a vector bundle over $X$. The pull-back $F^*(E)$ under the Frobenius morphism
carries a canonical connection $\nabla^{can}$, which satisfies the equality
$\psi(F^*(E),\nabla^{can}) = 0$.
\item Given a local system $(V, \nabla)$ over $X$, there exists a vector bundle $E$ such that
$(V, \nabla) = (F^*(E),\nabla^{can})$ if and only if $\psi(V, \nabla) = 0$.
\end{enumerate}
\end{thm}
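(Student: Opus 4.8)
The plan is to prove the two assertions separately, the first being a direct construction and the second the substance of Cartier descent. For part (1), I would first exhibit the connection: locally $F^*(E)$ is the sheaf $\cO_X \otimes_{F^{-1}\cO_X} F^{-1}E$, generated as an $\cO_X$-module by the pulled-back sections $1 \otimes e$. I would define $\nabla^{can}$ by declaring these generators horizontal, $\nabla^{can}(f \cdot (1 \otimes e)) = df \otimes (1 \otimes e)$, and check well-definedness: the only relations among the generators involve scalars $a \in F^{-1}\cO_X$, which are locally $p$-th powers, so $da = 0$ and the Leibniz rule is consistent. To see $\psi(F^*(E), \nabla^{can}) = 0$ I would evaluate on the generators: since $\nabla^{can}(D)(1 \otimes e) = 0$ for every local vector field $D$, both $\nabla^{can}(D)^p(1 \otimes e)$ and $\nabla^{can}(D^p)(1 \otimes e)$ vanish, so $\psi(1\otimes e) = 0$. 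Because $\psi$ is $\cO_X$-linear as an endomorphism of $V$ (by \cite{Katz}~Prop.~5.2) and the $1 \otimes e$ generate $F^*(E)$, this forces $\psi = 0$ everywhere.

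For part (2), the \emph{only if} direction is immediate from (1). For the \emph{if} direction I would set $E := V^\nabla = \ker(\nabla : V \to V \otimes \Omega_X^1)$, which is naturally a sheaf of modules over $\ker(d : \cO_X \to \Omega_X^1)$, the subsheaf of $p$-th powers; identifying the latter with $\cO_X$ through the absolute Frobenius (a homeomorphism) produces an $\cO_X$-module $E$ together with an evaluation map $F^*(E) \to V$, $f \otimes s \mapsto fs$, that carries $\nabla^{can}$ to $\nabla$. Everything then reduces to a local statement, and here the hypothesis enters decisively. Working in an étale coordinate $t$ with $D = \partial/\partial t$, one has $D^p = 0$ as a derivation (it is a derivation by the higher Leibniz rule, since $\binom{p}{i} \equiv 0$ for $0 < i < p$, and it kills $t$), so $\psi = 0$ is \emph{exactly} the nilpotency $\nabla(D)^p = 0$, while the Leibniz rule gives the Weyl-type relation $[\nabla(D), t] = \Id$ between $\nabla(D)$ and multiplication by $t$.

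The heart of the matter, and the step I expect to be the main obstacle, is converting these two relations into the assertion that $V^\nabla$ is locally free of rank $\rk(V)$ and that $F^*(V^\nabla) \to V$ is an isomorphism. My plan is to produce an explicit horizontal projector: for a local section $s$, set $P(s) = \sum_{i=0}^{p-1} \frac{(-t)^i}{i!}\, \nabla(D)^i s$, which is legitimate since $i! \neq 0$ for $i < p$. A direct computation using $[\nabla(D),t]=\Id$ shows the sum telescopes and $\nabla(D)\,P(s) = \frac{(-t)^{p-1}}{(p-1)!}\,\nabla(D)^p s = 0$, so $P$ maps $V$ into $V^\nabla$ and restricts to the identity on $V^\nabla$. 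This projector simultaneously shows that horizontal sections generate $V$ over $\cO_X$ and lets one write every local section uniquely as $\sum_{i=0}^{p-1} t^i v_i$ with $v_i \in V^\nabla$, which yields both local freeness of $E$ of the correct rank and the bijectivity of $F^*(E) \to V$. Finally I would check that the construction glues to a global $\cO_X$-module and that the assignments of (1) and (2) are mutually inverse, completing the equivalence.
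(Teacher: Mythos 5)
The paper does not prove this statement; it is quoted verbatim from the literature (\cite{Katz}, Theorem 5.1), so there is no internal proof to compare against. Your argument is correct and is essentially the standard proof of Cartier's theorem given in that reference: part (1) by making pulled-back sections horizontal and using $\cO_X$-linearity of $\psi$, and part (2) by Cartier descent in an \'etale coordinate, where $D^p=0$ turns the vanishing of $p$-curvature into $\nabla(D)^p=0$ and the truncated-exponential projector $P$ (together with the free basis $1,t,\dots,t^{p-1}$ of $\cO_X$ over $\cO_X^p$ and repeated application of $\nabla(D)$ for uniqueness) yields the isomorphism $F^*(V^\nabla)\map{\sim}V$.
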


\subsection{Direct images under the Frobenius morphism }
Applying the Grothendieck-Riemann-Roch formula to the morphism $F: X
\rightarrow X$ we obtain the following  useful formula for the
degree of the direct image of a vector bundle under the Frobenius
morphism
\begin{lem} \label{degFQ}
Let $Q$ be a vector bundle of rank $q = \rk(Q)$ over $X$. Then we have
$$ \deg(F_*(Q)) = \deg(Q) + q(p-1)(g-1), \qquad \text{and} \qquad \mu(F_*(Q)) = \frac{\mu(Q)}{p} + \left( 1 - \frac{1}{p} \right)(g-1). $$
\end{lem}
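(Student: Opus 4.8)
The plan is to apply the Grothendieck--Riemann--Roch theorem to the finite morphism $F : X \to X$, exactly as indicated in the statement. The first thing I would record is that, since $F$ is finite and hence affine, the higher direct images $R^iF_*(Q)$ vanish for $i>0$; thus $F_*(Q)$ represents $F_![Q]$ in $K$-theory and GRR computes its Chern character directly. I would also isolate the two numerical inputs that drive the entire computation: the absolute Frobenius has degree $p$ (it is the $p$-power map on the function field), so that $\rk(F_*(Q)) = pq$ and the proper pushforward of the fundamental class satisfies $F_*[X] = p[X]$, while $F_*$ acts as the identity on the class of a point.

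Next I would write GRR in the form $\mathrm{ch}(F_*(Q))\cdot \mathrm{td}(T_X) = F_*\bigl(\mathrm{ch}(Q)\cdot \mathrm{td}(T_X)\bigr)$ and expand both sides in the Chow ring of the curve, where $\mathrm{ch}(Q) = q + \deg(Q)$ and $\mathrm{td}(T_X) = 1 + \tfrac12 c_1(T_X) = 1 - (g-1)$, using $c_1(T_X) = -\deg(\Omega_X^1) = 2-2g$. On the source side the product has codimension-$1$ part $\deg(Q) - q(g-1)$; applying $F_*$ leaves this point-class coefficient unchanged. On the target side, using $\rk(F_*(Q)) = pq$, the codimension-$1$ part is $\deg(F_*(Q)) - pq(g-1)$.

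Equating the two codimension-$1$ contributions then yields $\deg(F_*(Q)) - pq(g-1) = \deg(Q) - q(g-1)$, which rearranges to the asserted formula $\deg(F_*(Q)) = \deg(Q) + q(p-1)(g-1)$. The slope formula follows immediately by dividing through by $\rk(F_*(Q)) = pq$.

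The only genuinely delicate point is the bookkeeping for the two different normalizations of the pushforward on the two graded pieces: $F_*$ multiplies the top (fundamental) class by the degree $p$ of the morphism but fixes the point class. Getting these conventions straight --- equivalently, remembering that $\rk(F_*(Q)) = pq$ --- is exactly what makes the codimension-$1$ terms match up correctly. As a safeguard I would cross-check the result against an elementary Euler-characteristic computation: since $F$ is affine one has $\chi(X, F_*(Q)) = \chi(X, Q)$, and comparing the two instances of the Riemann--Roch formula (with ranks $pq$ and $q$ respectively) reproduces the same degree formula, giving an independent confirmation.
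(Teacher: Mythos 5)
Your proof is correct and follows exactly the route the paper indicates: the lemma is stated as an immediate consequence of Grothendieck--Riemann--Roch applied to $F$, and your expansion of both sides (with the rank $pq$ and the two pushforward normalizations handled correctly) is precisely that computation carried out in full. Your closing cross-check via $\chi(X,F_*(Q))=\chi(X,Q)$ and ordinary Riemann--Roch is in fact a complete elementary proof on its own, but it confirms rather than alters the argument.
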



\subsection{The Hirschowitz bound and Quot-schemes}
\subsubsection{Existence of subbundles of given rank and degree}
We will use the following result due to A. Hirschowitz \cite{hir}
(see also \cite{lange}).
\begin{thm}\label{hirschowitz-bound}
Let $X$ be a smooth, projective curve of genus $g\geq 2$. Let $V$ be
a vector bundle of rank $n$ and degree $d$ over $X$. Let $m$ be an integer satisfying $1 \leq
m \leq n-1$. Then there exists a subbundle $W\subset V$
of rank $m$ such that
$$\mu(W)\geq
\mu(V)-\left(\frac{n-m}{n}\right)(g-1)-\frac{\varepsilon}{mn},$$
where $\varepsilon$ is the unique integer satisfying $0\leq \varepsilon\leq n-1$ and
$$\varepsilon+m(n-m)(g-1)\equiv md\mod{n}.$$
\end{thm}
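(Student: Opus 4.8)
The plan is to translate the existence of a rank-$m$ subbundle $W \subset V$ of degree $e$ into the non-emptiness of the Quot scheme $\mathrm{Quot}^{n-m,\,d-e}(V)$ parametrizing quotients $V \twoheadrightarrow T$ of rank $n-m$ and degree $d-e$ (so that $W = \ker$), and to read off the optimal $e$ from a deformation-theoretic dimension count. At a point $[V \twoheadrightarrow T]$ the tangent and obstruction spaces are $\Hom(W,T)$ and $\Ext^1(W,T)$, so every component of the Quot scheme has dimension at least the expected dimension $\chi(W,T)$. By Riemann--Roch, $\chi(W,T) = \deg \mathcal{H}om(W,T) + m(n-m)(1-g) = md - ne - m(n-m)(g-1)$, using $\deg \mathcal{H}om(W,T) = m\deg T - (n-m)\deg W = md - ne$. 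I would choose $e$ to be the largest integer with $\chi(W,T) \geq 0$, i.e. I would write $md - m(n-m)(g-1) = ne + \varepsilon$ with $0 \leq \varepsilon \leq n-1$, so that $\varepsilon \equiv md - m(n-m)(g-1) \pmod n$ and $\chi(W,T) = \varepsilon$. For this $e$ one computes directly $\mu(W) = e/m = \mu(V) - \frac{(n-m)(g-1)}{n} - \frac{\varepsilon}{mn}$, which is exactly the asserted bound; thus it remains only to produce a rank-$m$ degree-$e$ subbundle, equivalently to prove $\mathrm{Quot}^{n-m,\,d-e}(V) \neq \emptyset$.

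Next I would reduce to the case of a generic bundle. Let $d_m(V)$ denote the maximal degree of a rank-$m$ subbundle of $V$. For any flat family of bundles the relative Quot scheme of rank-$m$, degree-$e$ subsheaves is proper over the base, and its image is exactly $\{V : d_m(V) \geq e\}$ (a subsheaf of degree $e$ saturates to a subbundle of degree $\geq e$, and conversely one lowers the degree of a maximal subbundle by elementary modifications); hence $d_m$ is upper semicontinuous. Since the stack $\mathrm{Bun}_{n,d}$ of rank-$n$ degree-$d$ bundles is irreducible, $d_m$ attains its minimum at the generic point $\eta$, and for every bundle $V$ one has $d_m(V) \geq d_m(\eta)$. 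Consequently it suffices to prove $d_m(\eta) \geq e$, i.e. that the generic bundle carries a rank-$m$ subbundle of degree $e$.

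For the generic bundle I would argue by a dimension count on the stack $\mathcal{E}$ of short exact sequences $0 \to W \to V \to T \to 0$ with $\rk W = m$, $\deg W = e$, $\rk T = n-m$, $\deg T = d-e$. Projecting to $(W,T)$ and using that the fibre is the stack of extensions, of dimension $-\chi(T,W)$, gives
$$\dim \mathcal{E} = m^2(g-1) + (n-m)^2(g-1) - \chi(T,W) = n^2(g-1) + \varepsilon = \dim \mathrm{Bun}_{n,d} + \varepsilon,$$
the middle equality because $-\chi(T,W) = \varepsilon + 2m(n-m)(g-1)$ and $m^2 + (n-m)^2 + 2m(n-m) = n^2$. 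Since $\mu(W) < \mu(V) < \mu(T)$, for general stable $W,T$ one has $\Hom(T,W)=0$, so the extensions are parametrized by $\Ext^1(T,W) \neq 0$. The forgetful map $\mathcal{E} \to \mathrm{Bun}_{n,d}$, $[0\to W \to V \to T \to 0] \mapsto V$, has source dimension at least target dimension; if it dominates, then the generic bundle lies in its image and hence admits the desired subbundle, proving $d_m(\eta) \geq e$ and completing the argument.

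The main obstacle is precisely the dominance of $\mathcal{E} \to \mathrm{Bun}_{n,d}$: the dimension count only shows the source is large enough, and one must check that the generic bundle is actually reached. I would establish this by computing the differential of the forgetful map at a point with $W,T$ general stable and the extension class general, and showing it surjects onto $H^1(X,\End V)$; concretely this amounts to controlling the connecting maps in the long exact sequences obtained by applying $\Hom(-,W)$, $\Hom(-,T)$ and $\Hom(V,-)$ to $0 \to W \to V \to T \to 0$, where the hypothesis $g \geq 2$ together with the slope inequalities $\mu(W) < \mu(T)$ keep the relevant obstruction spaces small. This surjectivity is the genuinely geometric input; the remaining steps are the formal translation and the Riemann--Roch bookkeeping carried out above.
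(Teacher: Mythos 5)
First, a point of reference: the paper does not prove this statement at all --- it is quoted verbatim from Hirschowitz \cite{hir} (see also \cite{lange}) and used as a black box --- so there is no proof in the paper against which to measure your argument, and your proposal has to stand on its own. Much of it does: the arithmetic of choosing $e$ with $md - m(n-m)(g-1) = ne + \varepsilon$, $0\leq\varepsilon\leq n-1$, correctly translates the asserted slope bound into the existence of a rank-$m$, degree-$e$ subbundle; the semicontinuity reduction is sound (the image of the proper relative Quot-scheme is closed, $\mathrm{Bun}_{n,d}$ is irreducible, so the maximal subbundle degree $d_m(V)$ is minimal at the generic point and it suffices to treat a general $V$); and the count $\dim\mathcal{E} = n^2(g-1) + \varepsilon = \dim\mathrm{Bun}_{n,d} + \varepsilon$ checks out.

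The genuine gap is the dominance of $\mathcal{E}\to\mathrm{Bun}_{n,d}$, which you correctly identify as the geometric crux but then only promise to verify; as written this step is missing, and it is essentially the entire content of the theorem. Concretely, the cokernel of the differential of the forgetful map at a point $[0\to W\to V\to T\to 0]$ injects into $\Ext^1(W,T)$, so smoothness (hence dominance onto the irreducible target) would follow from $\Ext^1(W,T)=0$ for general $W,T$. By Serre duality $\Ext^1(W,T)^* \cong H^0(\Hom(T,W)\otimes\Omega^1_X)$, and with your choice of $e$ one computes $\chi(\Hom(T,W)\otimes\Omega^1_X) = -\varepsilon \leq 0$ while the slope is $(g-1) - \varepsilon/\bigl(m(n-m)\bigr)$, sitting essentially at the critical value $g-1$: no slope or stability argument forces the vanishing of $h^0$ there, so the hoped-for smallness of the ``relevant obstruction spaces'' is not a consequence of $g\geq 2$ and $\mu(W)<\mu(T)$. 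What is actually needed is that the tensor product $\Hom(T,W)$ of two \emph{general} stable bundles has the expected cohomology --- a nontrivial theorem (itself due to Hirschowitz, and the engine behind later work of Russo and Teixidor on the Lange conjecture), not a formal consequence of the long exact sequences. Until that input (or a substitute such as Hirschowitz's original degeneration argument) is supplied, your dimension count only shows that the source of $\mathcal{E}\to\mathrm{Bun}_{n,d}$ is large enough, which by itself never implies dominance.
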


\subsubsection{Non-emptiness of Quot-schemes}
Let $Q$ be a vector bundle of rank $q$ and let $r$ be an integer
satisfying $q < r < pq$. We denote by
$$ \mathrm{Quot}^{r,0}(F_*(Q)) $$
the Quot-scheme parameterizing rank-$r$ subsheaves of degree $0$ of
the vector bundle $F_*(Q)$.


The following result is an immediate consequence of Theorem \ref{hirschowitz-bound}.
\begin{prop} \label{nonemptyQuot}
If $\deg(Q) \geq -(r-q)(g-1)$,
then $ \mathrm{Quot}^{r,0}(F_*(Q)) \not= \emptyset$.
\end{prop}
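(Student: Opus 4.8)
The goal is to show that $\mathrm{Quot}^{r,0}(F_*(Q)) \neq \emptyset$ whenever $\deg(Q) \geq -(r-q)(g-1)$, under the standing hypothesis $q < r < pq$. A subsheaf of rank $r$ and degree $0$ of $F_*(Q)$ is a point of the Quot-scheme, so it suffices to produce a rank-$r$ subbundle $W \subset F_*(Q)$ with $\deg(W) \geq 0$; one can then saturate or take an elementary modification to land exactly in degree $0$, since any rank-$r$ subbundle of degree $\geq 0$ contains rank-$r$ subsheaves of every smaller degree, in particular of degree $0$.

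The plan is to apply the Hirschowitz bound (Theorem \ref{hirschowitz-bound}) to the vector bundle $V = F_*(Q)$, which has rank $n = pq$ (since $F$ has degree $p$) and degree $d = \deg(F_*(Q)) = \deg(Q) + q(p-1)(g-1)$ by Lemma \ref{degFQ}. Taking $m = r$ in Hirschowitz's theorem — legitimate because $1 \leq r \leq pq - 1 = n-1$ by the hypothesis $q < r < pq$ — produces a subbundle $W \subset F_*(Q)$ of rank $r$ with
$$\mu(W) \geq \mu(F_*(Q)) - \left(\frac{pq - r}{pq}\right)(g-1) - \frac{\varepsilon}{r\,pq},$$
where $0 \leq \varepsilon \leq pq - 1$. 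Multiplying through by $r$ gives a lower bound on $\deg(W) = r\,\mu(W)$, and the remaining task is purely numerical: verify that the hypothesis $\deg(Q) \geq -(r-q)(g-1)$ forces the resulting lower bound on $\deg(W)$ to be $> -1$, hence $\deg(W) \geq 0$ since degrees are integers.

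First I would substitute $\mu(F_*(Q)) = \frac{\mu(Q)}{p} + (1 - \frac{1}{p})(g-1)$ from Lemma \ref{degFQ} into the Hirschowitz inequality and compute $\deg(W) = r\mu(W)$ explicitly. The $(g-1)$ terms combine: the contribution $r(1-\frac1p)(g-1)$ from $\mu(F_*(Q))$ together with $-r\left(\frac{pq-r}{pq}\right)(g-1)$ should, after using $\deg(Q) \geq -(r-q)(g-1)$ to bound $\frac{r}{p}\mu(Q) = \frac{\deg(Q)}{p}$, yield a lower bound of the form $\deg(W) \geq -\frac{\varepsilon}{pq}$. Since $\varepsilon \leq pq - 1$, this gives $\deg(W) > -1$, and integrality finishes the argument.

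The main obstacle is the bookkeeping in the final numerical estimate: one must track the fractional error term $\frac{\varepsilon}{pq}$ carefully and confirm that, combined with the hypothesis on $\deg(Q)$, the bound on $\deg(W)$ stays strictly above $-1$ rather than merely above some larger negative number. The key structural point that makes this work is that the two $(g-1)$ contributions nearly cancel, leaving the degree of $Q$ as the decisive term; once that cancellation is verified and the hypothesis $\deg(Q) \geq -(r-q)(g-1)$ is plugged in, only the harmless fractional term $\frac{\varepsilon}{pq} < 1$ remains to be absorbed by integrality. I do not expect to need the precise congruence condition on $\varepsilon$ beyond the crude bound $\varepsilon \leq pq - 1$.
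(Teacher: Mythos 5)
Your proof is correct, and it rests on the same key tool as the paper's — the Hirschowitz bound applied to $F_*(Q)$ with $m=r$, followed by a lower modification to reach degree exactly $0$ — but the numerical endgame is genuinely different and, in fact, cleaner. The paper does use the congruence condition on $\varepsilon$: it computes $\varepsilon \equiv r\left[\deg(Q)+(r-q)(g-1)\right] \pmod{pq}$ and then splits into two cases according to whether $r\left[\deg(Q)+(r-q)(g-1)\right]$ is $\leq pq-1$ or $\geq pq$, in each case concluding $\mu(W)\geq 0$ directly. You instead multiply the slope inequality by $r$, observe the cancellation
$$\deg(W) \;\geq\; \frac{r\left[\deg(Q)+(r-q)(g-1)\right]}{pq} - \frac{\varepsilon}{pq} \;\geq\; -\frac{pq-1}{pq} \;>\; -1,$$
and invoke integrality of $\deg(W)$. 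This avoids both the case analysis and the congruence entirely; the crude bound $\varepsilon\leq pq-1$ suffices, exactly as you anticipated. The only thing to make explicit when writing this up is the cancellation $rq(p-1)-r(pq-r)=r(r-q)$, which is the "structural point" you flagged, and it checks out.
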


\begin{proof}
By Theorem \ref{hirschowitz-bound} there exists
a subsheaf $W\subset F_*(Q)$ with $\rk(W)=r$ such that
\begin{equation}\label{hir}
\mu(W)\geq
\mu(F_*(Q))-\left(\frac{pq-r}{pq}\right)(g-1)-\frac{\varepsilon}{pqr},
\end{equation}
where $\epsilon$ is the unique integer satisfying $0\leq \varepsilon\leq pq-1$ and
$$\varepsilon+r(pq-r)(g-1)=r\deg(F_*(Q))\mod{pq}.$$
Now by Lemma \ref{degFQ} we have $\deg(F_*(Q))= \deg(Q) + q(p-1)(g-1)$, so that
\begin{eqnarray*}
  \varepsilon &=& r(\deg(Q) + q(p-1)(g-1)) - r(pq-r)(g-1)\mod{pq},  \\
   &=&r \deg(Q) +pqr(g-1)-rq(g-1)-pqr(g-1)+r^2(g-1)\mod{pq}  \\
   &=&r \deg(Q) -rq(g-1)+r^2(g-1)\mod{pq}  \\
   &=&r \left[ \deg(Q) + (r-q)(g-1) \right] \mod{pq}
\end{eqnarray*}

First we consider the case when $r \left[ \deg(Q) + (r-q)(g-1) \right] \leq pq -1$. We
replace $\varepsilon$ with $r \left[ \deg(Q) + (r-q)(g-1) \right]$ in
the inequality \eqref{hir} and we obtain $\mu(W) \geq 0$.
Thus we can find a subsheaf $W$ of rank $r$ of $F_*(Q)$ with
$\deg(W) \geq 0$. If $\deg(W)>0$, then we take a lower modification $V$ of
$W$ to get a subsheaf with $\deg(V)=0$ and $\rk(V)=r$.


Secondly, we consider the case when $r \left[ \deg(Q) + (r-q)(g-1) \right] \geq pq$ or, equivalently
$\deg(Q) \geq  \frac{pq}{r} - (r-q)(g-1)$. When we combine this inequality and
$\frac{- \varepsilon}{pqr} \geq - \frac{1}{r}$, with inequality \eqref{hir} we obtain the lower
bound $\mu(W) \geq 0$,
and we can conclude as above.
\end{proof}

\subsubsection{Dimension estimates for the Quot-scheme}
We recall the following result \cite[Expos\'e IV, Cor. 5.2]{Gro} (or \cite[Cor. 2.2.9]{huy-book}), which will be used in
Section~\ref{dimensions-rank-two} in the rank two case.

\begin{prop} \label{dimestimateQuot}
Any irreducible
component of $\mathrm{Quot}^{r,0}(F_*(Q))$  has dimension at least $$r \left[ (r-q)(g-1) + \deg(Q) \right].$$
\end{prop}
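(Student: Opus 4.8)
The plan is to apply Grothendieck's deformation theory of the Quot-scheme \cite{Gro} together with Riemann-Roch. Fix a $k$-point of $\mathrm{Quot}^{r,0}(F_*(Q))$, corresponding to a short exact sequence
\begin{equation*}
0 \lra V \lra F_*(Q) \lra G \lra 0,
\end{equation*}
where $V$ has rank $r$ and degree $0$ and $G = F_*(Q)/V$ is the universal quotient. Since $F_*(Q)$ has rank $pq$, the sheaf $G$ has rank $pq - r$. The standard local description of the Quot-scheme identifies the Zariski tangent space at this point with $\Hom(V, G)$ and places the obstructions to extending first-order deformations in $\Ext^1(V, G)$. Consequently every irreducible component passing through the chosen point has dimension at least
\begin{equation*}
\dim \Hom(V, G) - \dim \Ext^1(V, G) = \chi(V, G),
\end{equation*}
the equality holding because $X$ is a curve, so $\Ext^i(V, G) = 0$ for $i \geq 2$. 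As this Euler characteristic depends only on the fixed numerical invariants $r$, $0$ and $F_*(Q)$, and not on the chosen point, the bound holds for every component of $\mathrm{Quot}^{r,0}(F_*(Q))$.

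It then remains to evaluate $\chi(V, G)$. First I would observe that $V$, being a torsion-free subsheaf of the locally free sheaf $F_*(Q)$ on the smooth curve $X$, is itself a vector bundle; hence $\Ext^i(V, G) = H^i(X, V^\vee \otimes G)$ and $\chi(V, G) = \chi(V^\vee \otimes G)$. Riemann-Roch on $X$ gives $\chi(V^\vee \otimes G) = \deg(V^\vee \otimes G) + \rk(V^\vee \otimes G)(1 - g)$, valid even when $G$ carries torsion provided degree and rank are taken in the generalized sense. Because $V^\vee$ is locally free of rank $r$ and degree $0$, one has $\rk(V^\vee \otimes G) = r(pq - r)$ and $\deg(V^\vee \otimes G) = r\deg(G)$, while the exact sequence gives $\deg(G) = \deg(F_*(Q))$. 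Substituting the value $\deg(F_*(Q)) = \deg(Q) + q(p-1)(g-1)$ from Lemma~\ref{degFQ} then yields
\begin{equation*}
\chi(V, G) = r\bigl(\deg(Q) + q(p-1)(g-1)\bigr) - r(pq - r)(g-1),
\end{equation*}
which, after collecting the terms in $(g-1)$, simplifies to $r\left[(r-q)(g-1) + \deg(Q)\right]$, the asserted bound.

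Since this is essentially a bookkeeping argument, I do not expect a serious obstacle; the only points requiring care are the two inputs. The first is the correct tangent-obstruction calculus for the Quot-scheme, which I would cite from \cite{Gro} (or re-derive from the universal quotient). The second is the Riemann-Roch bookkeeping, where one must account for the possibility that $G = F_*(Q)/V$ fails to be locally free: as $V$ is only required to be a subsheaf, $G$ may acquire a torsion part, but this affects neither the local-freeness of $V$ nor the identities for $\rk$ and $\deg$ used above, so the value of $\chi(V,G)$ is unchanged.
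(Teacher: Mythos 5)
Your proposal is correct and follows exactly the paper's route: the paper likewise invokes Grothendieck's deformation theory to bound the local dimension below by $\chi(X,\Hom(E,F_*(Q)/E))$ and then evaluates this Euler characteristic via Riemann--Roch and Lemma~\ref{degFQ}. Your write-up simply supplies the bookkeeping (tangent space $\Hom(V,G)$, obstructions in $\Ext^1(V,G)$, the torsion caveat for $G$) that the paper leaves implicit, and the final arithmetic checks out.
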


\begin{proof}
By \cite[Expos\'e IV, Cor. 5.2]{Gro} (or \cite[Cor. 2.2.9]{huy-book}) the dimension at a point $E \in \mathrm{Quot}^{r,0}(F_*(Q))$ is at least
$\chi(X, \Hom( E, F_*(Q)/E))$, which is easily computed using Lemma \ref{degFQ}.
\end{proof}


\section{Opers}
\subsection{Definition and examples of opers and dormant opers}
Opers were introduced by A. Beilinson and V. Drinfeld in \cite{BD1} (see also \cite{BD2}).
In this paper, following \cite[Section 5]{JRXY}, we slightly modify the definition (in \cite{BD1,BD2}).

\begin{defi} \label{defioper}
An {\em oper} over a smooth algebraic curve $X$ defined over an algebraically closed field $k$ of
characteristic $p>0$ is a triple $(V, \nabla, V_\mydot)$, where
\begin{enumerate}
\item  $V$ is a vector bundle over $X$,
\item  $\nabla$ is a connection on $V$,
\item  $V_\mydot : 0 =  V_l  \subset V_{l-1} \subset \cdots \subset V_1 \subset V_0 = V$ is
a decreasing filtration by subbundles of $V$, called the oper flag.
\end{enumerate}
These data have to satisfy the following conditions
\begin{enumerate}
\item $\nabla(V_i) \subset V_{i-1} \otimes \Omega_X^1$ for $1 \leq i \leq l-1$,
\item the induced maps $\left( V_i/ V_{i+1} \right) \stackrel{\nabla}{\longrightarrow} \left( V_{i-1}/V_i
\right) \otimes \Omega_X^1$ are
isomorphisms for $1 \leq i \leq l-1$.
\end{enumerate}
\end{defi}

\begin{defi} \label{defiFroboper}
We say that an oper $(V, \nabla, V_\mydot)$ is a {\em dormant oper} if $\psi(V,\nabla) = 0$,
i.e. by Cartier's theorem, if the oper $(V, \nabla, V_\mydot)$ is of the form
$(F^*(E), \nabla^{can}, V_\mydot)$.
\end{defi}

Given an oper $(V, \nabla, V_\mydot)$ we denote by $Q$ the first quotient of $V_\mydot$, i.e.,
$$ Q = V_0/ V_1.$$
We define the {\em degree}, {\em type} and {\em length} of an oper $(V,\nabla, V_\mydot)$ by
$$ \deg (V,\nabla, V_\mydot) = \deg (V), \qquad \mathrm{type}(V,\nabla, V_\mydot) = \rk (Q) , \qquad
\mathrm{length}(V,\nabla, V_\mydot) = l.$$
The following formulae are immediately deduced from the definition:
\begin{equation} \label{quotientoper}
V_i/ V_{i+1} \cong Q \otimes (\Omega^1_X)^{\otimes i}  \qquad \text{for} \ 0 \leq i \leq l-1,
\end{equation}
\begin{equation} \label{degoper}
 \deg (V) = l( \deg (Q) + \rk(Q)(l-1)(g-1) ), \qquad \rk(V) = \rk(Q) l.
\end{equation}
We note that the existence of a connection $\nabla$ on $V$ implies that $\deg(V)$ is
divisible by $p$ (see \cite{Katz}) --- the connection $\nabla$ can have non-zero $p$-curvature. 
Note that our notion of oper of type $1$ corresponds to the notion
of $\GL(l)$-oper in \cite{BD1}.

\begin{rem} \label{flagHN}
If the quotient $Q$ is semistable, then the oper flag $V_\mydot$ coincides with the
Harder-Narasimhan filtration of the vector bundle $V$. To see this we note that the Harder-Narasimhan filtration is the unique
filtration characterized by the two properties (see section \ref{HNdef}): (1) graded pieces are semistable and 
(2) the slopes are strictly decreasing (starting with $\mu_{max}$). Now as $Q$ is semistable it suffices 
to check that slopes of the graded pieces satisfy the second property. This is an easy numerical check 
which is immediate from the formulae for the graded pieces given above.
\end{rem}

The following result which combines the results of \cite[Section 5]{JRXY} and \cite{sun06} provides
the basic example of opers in characteristic $p>0$.
\begin{thm} \label{exampleoper}
Let $E$ be any vector bundle over $X$ and let $F: X \rightarrow X$ be the absolute
Frobenius of $X$. Then the triple
$$(V = F^*(F_* (E)) , \nabla^{can} , V_\mydot ),$$
where $V_\mydot$ is the canonical filtration defined in \cite{JRXY}
section  5.3, is a {\em dormant oper} of type $\rk(E)$ and length $p$.
Moreover, there is an equality $V_0/V_1 = Q = E$.
\end{thm}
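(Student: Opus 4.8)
The plan is to verify directly that the triple $(F^*(F_*(E)), \nabla^{can}, V_\mydot)$ satisfies all the conditions in Definition~\ref{defioper}, and then to observe that it is dormant almost for free. The dormancy is the easy part: by Theorem~\ref{Cartierthm}(1), any bundle of the form $F^*(W)$ carries a canonical connection with $\psi(F^*(W), \nabla^{can}) = 0$; applying this with $W = F_*(E)$ shows immediately that $\psi(V, \nabla^{can}) = 0$, so once we know the triple is an oper at all, it is automatically a dormant oper. Thus the entire content of the theorem is concentrated in checking the oper axioms for the canonical filtration $V_\mydot$.

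The heart of the matter is the construction and properties of the canonical filtration $V_\mydot$ on $V = F^*(F_*(E))$, following \cite{JRXY} section~5.3. The plan is to recall that there is a natural surjection (the adjunction/counit map) $F^*(F_*(E)) \twoheadrightarrow E$, whose kernel gives $V_1 = \ker(V_0 \to E)$, so that the first quotient is $Q = V_0/V_1 = E$ as asserted. The filtration is then defined inductively: having the canonical connection $\nabla^{can}$ on $V$, one sets $V_{i}$ using the rule $V_{i} = \ker\bigl(V_{i-1} \xrightarrow{\nabla^{can}} (V/V_{i-1}) \otimes \Omega_X^1\bigr)$, or equivalently by repeatedly applying $\nabla^{can}$ to push down the filtration. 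The two conditions to verify are that $\nabla^{can}(V_i) \subset V_{i-1} \otimes \Omega_X^1$, which holds by the very construction of the $V_i$ as successive kernels, and that the induced maps $V_i/V_{i+1} \to (V_{i-1}/V_i) \otimes \Omega_X^1$ are \emph{isomorphisms}. The length being exactly $p$ and the type being $\rk(E)$ will fall out of this: the filtration has $p$ steps because the canonical connection on a Frobenius pullback has nilpotency of exponent $p$ (the associated graded is governed by the local structure of $F^*F_*$), so the graded pieces are $E \otimes (\Omega_X^1)^{\otimes i}$ for $0 \le i \le p-1$ by formula~\eqref{quotientoper}, each of rank $\rk(E)$, giving $\rk(V) = p \cdot \rk(E)$ consistently with \eqref{degoper}.

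The main obstacle I expect is proving that the graded maps are isomorphisms, i.e. that $\nabla^{can}$ induces an \emph{isomorphism} $V_i/V_{i+1} \xrightarrow{\sim} (V_{i-1}/V_i) \otimes \Omega_X^1$ rather than merely a nonzero or injective map. This is a local computation in the analytic (or formal) local coordinate: over the local ring, $F_*(\cO_X)$ is free with basis $1, t, t^2, \dots, t^{p-1}$ (where $t$ is a local coordinate), and one computes $\nabla^{can}$ explicitly on $F^*(F_*(\cO_X))$ to see that it acts as a shift operator $t^i \mapsto t^{i-1}\, dt$ up to units on the associated graded, which is manifestly an isomorphism degree by degree; the general-$E$ case follows by tensoring this local picture with $E$. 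The delicate point, and the reason the theorem cites \cite{sun06} in addition to \cite{JRXY}, is that the original argument in \cite{JRXY} assumed $F_*(E)$ was stable (when $E$ is stable) to guarantee that $V_\mydot$ is well-defined as a filtration by \emph{subbundles} (rather than merely subsheaves) with the correct quotients; Sun's contribution in \cite{sun06} removes this stability hypothesis, so I would invoke his result to conclude that the construction works for \emph{arbitrary} $E$. I would therefore organize the proof as: (i) construct $V_\mydot$ locally and globally via \cite{JRXY}/\cite{sun06}, identifying $Q = E$; (ii) verify the two oper axioms, with the isomorphism condition checked by the local shift-operator computation; (iii) read off type $\rk(E)$ and length $p$; (iv) conclude dormancy from Theorem~\ref{Cartierthm}(1).
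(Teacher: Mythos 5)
Your proposal is correct, but note that the paper itself gives no proof of Theorem~\ref{exampleoper}: it is stated as a combination of results imported wholesale from \cite{JRXY} (Section~5) and \cite{sun06}, with the proof delegated entirely to those references. Your reconstruction --- the counit $F^*F_*(E)\twoheadrightarrow E$ giving $V_1$ and the identification $Q=E$, the inductive definition of $V_i$ as successive kernels under $\nabla^{can}$ (which makes the first oper axiom automatic), the local computation on the basis $1,t,\dots,t^{p-1}$ of $F_*(\cO_X)$ showing the graded maps are isomorphisms and that the length is $p$, and dormancy from Cartier's theorem --- is exactly the argument carried out in those sources, and your remark about the role of \cite{sun06} in removing the stability hypothesis of \cite{JRXY} matches what the authors themselves say in the introduction.
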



\subsection{Recollection of results on opers}
In this section we concentrate on opers of type $1$, which we call
opers for simplicity. Let $r \geq 2$ be an integer. Although all the results which 
are mentioned in this section are proved in \cite{BD1} and \cite{BD2} over the complex
numbers, we recall the main facts and include some
proofs for the convenience of the reader. 

\bigskip

We assume that $p >r$ and that $p$ does not divide $g-1$.

\bigskip

We recall that a $\GL(r)$-oper (as defined by \cite{BD1}) corresponds to an oper $(V, \nabla, V_\mydot)$ of type $1$ and length $r$ as defined by
\ref{defioper}. An $\SL(r)$-oper is a $\GL(r)$-oper $(V, \nabla, V_\mydot)$ which satisfies 
$(\det V, \det \nabla) \map{\sim} (\cO_X, d)$, where $d: \cO_X \ra \Omega_X^1$ is the differentiation operator on $X$.

\bigskip

Given a theta-characteristic $\theta$ on $X$ we consider the (unique up to isomorphism) rank-$2$ vector bundle 
$\cG_2(\theta)$ defined as the non-split extension of $\theta^{-1}$ by $\theta$
$$ 0  \lra \theta \lra \cG_2(\theta) \lra \theta^{-1} \lra 0, $$
and equip it with the filtration $\cG_2(\theta)_1 = \theta$.
Under the above assumption on the characteristic $p$, any connection $\nabla$ on $\cG_2(\theta)$ defines a $\GL(2)$-oper 
$(\cG_2(\theta), \nabla, \cG_2(\theta)_\mydot)$. For arbitrary $r$, we introduce the rank-$r$ vector bundle given by the symmetric power  
$$\cG_r(\theta) := \Sym^{r-1}(\cG_2(\theta))$$ 
and equip it with the filtration $\cG_r(\theta)_\mydot$ given by the natural subbundles
$$\cG_r(\theta)_i = \theta^{i} \otimes \cG_{r-i}(\theta) \qquad  \text{for} \qquad 1 \leq i \leq r-1.$$
We define $\cG_1(\theta) = \cO_X$. As for $r= 2$ any connection $\nabla$ on $\cG_r(\theta)$ defines a
$\GL(r)$-oper $(\cG_r(\theta) , \nabla, \cG_r(\theta)_\mydot)$. Conversely, it is easy to show that, up to tensor product by a line
bundle, the underlying filtered bundle $(V, V_\mydot)$ of a $\GL(r)$-oper $(V, \nabla, V_\mydot)$ is 
isomorphic to $(\cG_r(\theta) , \cG_r(\theta)_\mydot)$. It is straightforward to establish that, if $r$ is odd $\cG_r(\theta)$ does not
depend on $\theta$ and, if $r$ is even $\cG_r(\theta \otimes \gamma) = \cG_r(\theta) \otimes \gamma$ for any $2$-torsion line bundle
$\gamma  \in JX[2]$. We recall that $\det \cG_r(\theta) = \cO_X$ and that $\cG_r(\theta)_\mydot$ is the Harder-Narasimhan filtration 
of $\cG_r(\theta)$.

\bigskip

We also note that the space of global sections
\begin{equation} \label{sectionsGtheta}
 \mathrm{Hom}(\theta^{-(r-1)}, \theta \otimes \cG_{r-1}(\theta) \otimes \Omega_X^1) = H^0(X, \cG_{r-1}(\theta) \otimes \theta^{r+2} )
\end{equation}
has a canonical filtration induced by the filtration $\cG_{r-1}(\theta)_\mydot$ and its associated graded vector space is given by
$$ {\bf W}_r = \bigoplus_{i=2}^r H^0(X, (\Omega_X^1)^{\otimes i}).$$
We recall that $\dim {\bf W}_r = (g-1)(r^2-1)$.

\bigskip

In order to deal with moduli of opers we introduce the following moduli stacks:

\begin{enumerate}
\bigskip

\item the stack $\Loc_{\GL(r)}$ parameterizing rank-$r$ local systems over $X$ defined as the sheaf of groupoids which
assigns to a $k$-scheme $S$ the groupoid $\Loc_{\GL(r)}(S)$ of pairs $(\cV, \nabla)$ where $\cV$ is a vector
bundle over $X \times S$ having degree zero on the fibers over $S$ and 
$\nabla : \cV \ra \cV \otimes p_X^* \Omega_X^1$  is an $\cO_S$-linear map satisfying Leibniz rule.
Morphisms in $\Loc_{\GL(r)}(S)$ are isomorphisms of vector bundles $\cV \map{\sim} \cV'$ commuting with the connections. The stack $\Loc_{\GL(r)}$
is algebraic (see e.g. \cite{laszlo-pauly} Corollary 3.1).

\bigskip

\item the stack $\Loc_{\SL(r)}$ parameterizing rank-$r$ local systems over $X$ with trivial determinant defined as the fiber
over the trivial local system $(\cO_X,d)$ of the map of stacks $\Loc_{\GL(r)} \ra \Loc_{\GL(1)}$ induced by the determinant. More 
explicitly, for a $k$-scheme $S$ the groupoid $\Loc_{\SL(r)}(S)$ consists of triples $(\cV, \nabla, \alpha)$, where 
$\cV$ and $\nabla$ are as in (1) and $\alpha : (\det \cV, \det \nabla) \map{\sim} (\cO_{X \times S} , d)$ is an
isomorphism of local systems. Here $d : \cO_{X \times S} \ra p_X^* \Omega_X^1$ is the $\cO_S$-linear differentiation operator
of $X \times S/S$. Since the morphism of stacks  $pt \ra \Loc_{\GL(1)}$ corresponding to the trivial local system
$(\cO_X, d)$ is representable, the morphism $\Loc_{\SL(r)} \ra
\Loc_{\GL(r)}$ is also representable (by base change) and therefore $\Loc_{\SL(r)}$ is an algebraic stack.

\bigskip

\item the stack $\Op_{\SL(r)}$ parameterizing $\SL(r)$-opers over $X$ is the stack which assigns to a $k$-scheme $S$ the
groupoid $\Op_{\SL(r)}(S)$ consisting of quadruples  $(\cV, \nabla, \cV_\mydot, \alpha)$, where $\cV, \nabla, \alpha$ are as
in (1) and (2), and $\cV_\mydot$ denotes a decreasing filtration of subbundles of $\cV$. Moreover these data have to satisfy the
conditions (1) and (2) of Definition \ref{defioper} over the relative curve  $X \times S \rightarrow S$. Morphisms in $\Op_{\SL(r)}(S)$ are isomorphisms
of vector bundles commuting with the connections, the trivializations of the determinant and preserving the filtrations.
The  morphism which forgets the filtration $\Op_{\SL(r)} \ra \Loc_{\SL(r)}$ is an immersion, since the filtration $\cV_\mydot$ coincides 
with the Harder-Narasimhan filtration on $\cV$ (see Remark \ref{flagHN} and e.g. \cite{heinloth} Proposition 5.3). 

\bigskip

\item the stack $\mathcal{T}_r$  parameterizing $r$-torsion line bundles over $X$ is the stack which assigns to a $k$-scheme $S$ the 
groupoid $\mathcal{T}_r(S)$ consisting of pairs $(\cL, \beta)$ where $\cL$ is a line bundle over $X \times S$ and $\beta : \cL^{\otimes r}
\map{\sim} \cO_{X \times S}$ is an isomorphism. Morphisms in $\mathcal{T}_r(S)$ are isomorphisms between line bundles  commuting with the 
trivializations, hence the automorphism group $\mathrm{Aut}(\mathcal{T}_r(S)) = \mu_r$. Note that $\mathcal{T}_r$ is the fiber product of the
stack morphism $[r] : \mathcal{P}ic \ra \mathcal{P}ic$ between Picard stacks of the curve $X$ induced by the $r$-th power map and the 
stack morphism $pt \rightarrow \mathcal{P}ic$ corresponding to the trivial line bundle. 

\bigskip

\item the group $\mathrm{Aut}^0(\cG_r(\theta))$ of trivial-determinant automorphisms of the bundle 
$\cG_r(\theta)$ acts via conjugation on the space of connections of $\cG_r(\theta)$. We denote this
action by $\nabla \mapsto \nabla^\phi$ for $\phi \in \mathrm{Aut}^0(\cG_r(\theta))$. We introduce the 
space $\mathrm{Conn}^0(\cG_r(\theta))$ parameterizing gauge equivalence classes of connections on $\cG_r(\theta)$ with trivial determinant. Then
we have

\begin{lem} \label{operaffine}
The space  $\mathrm{Conn}^0(\cG_r(\theta))$ is an affine space whose underlying vector space equals the space of 
sections $H^0(X, \cG_{r-1}(\theta) \otimes \theta^{r+2})$ introduced
in \ref{sectionsGtheta}. Note that $H^0(X, \cG_{r-1}(\theta) \otimes \theta^{r+2})$ is naturally a subspace of 
$H^0(X, \End_0(\cG_r(\theta)) \otimes \Omega_X^1)$.
\end{lem}

\begin{proof}
It can be shown that, fixing a connection $\nabla_0$ on $\cG_r(\theta)$, for any connection $\nabla$ there exists a 
unique (up to homothety) $\phi \in  \mathrm{Aut}^0(\cG_r(\theta))$ and a unique $\omega \in  H^0(X, \cG_{r-1}(\theta) \otimes \theta^{r+2})$ such
that 
$$ \nabla^\phi = \nabla_0 + \omega.$$
\end{proof}

\noindent
In other words, fixing a connection $\nabla_0$ on $\cG_r(\theta)$, the affine subspace 
$$\{ \nabla_0 + \omega \ | \ \omega \in H^0(X, \cG_{r-1}(\theta) \otimes \theta^{r+2}) \}$$ 
of the space of connections on $\cG_r(\theta)$ is
isomorphic to the quotient $\mathrm{Conn}^0(\cG_r(\theta))$. In particular, we have a transitive and free action of $H^0(X, \cG_{r-1}(\theta) \otimes \theta^{r+2})$ on the space
$\mathrm{Conn}^0(\cG_r(\theta))$ --- note that any $\omega \in H^0(X, \cG_{r-1}(\theta) \otimes \theta^{r+2})$
is fixed under conjugation by $\mathrm{Aut}^0(\cG_r(\theta))$.

\end{enumerate}

\bigskip

The previous moduli stacks are related as follows. 

\begin{prop}[\cite{BD1} Proposition 3.4.2] \label{isooperslr}
Fixing a theta-characteristic $\theta$ we have an isomorphism between
stacks 
$$ \tau_\theta : \Op_{\SL(r)} \map{\sim} \mathrm{Conn}^0(\cG_r(\theta)) \times \mathcal{T}_r. $$
\end{prop}

\begin{proof}
Over a $k$-scheme $S$ an object in $\Op_{\SL(r)}(S)$ is a quadruple $(\cV, \nabla, \cV_\mydot, \alpha)$ described above. We consider the quotient
line bundle $\mathcal{Q} = \cV_0 / \cV_1$ over $S \times X$. Then, using the connection $\nabla$ and its induced isomorphisms on the quotients
$\cV_i / \cV_{i+1}$, we obtain an isomorphim $\det \cV \map{\sim}  \mathcal{Q}^{\otimes r} \otimes (\Omega_X^1)^{\otimes \frac{r(r-1)}{2}}$.
We put $\cL = \mathcal{Q} \otimes p_X^* \theta^{\otimes r -1}$. Composing the previous isomorphism with $\alpha$, we obtain an 
isomorphism $\beta : \cL^{\otimes r} \map{\sim} \cO_{X \times S}$. We recall that the line bundle $\cL$ has a unique 
$\cO_S$-linear connection $\nabla_{\cL}$ such that $(\cL, \nabla_{\cL}) ^{\otimes r} = (\cO_{X \times S}, d)$. Tensoring 
$(\cV, \nabla)$ with $(\cL^{-1}, \nabla_{\cL^{-1}})$ we obtain a local system  $(\cW, \nabla_{\cW}, \cW_\mydot) = 
(\cV \otimes \cL^{-1}, \nabla \otimes \nabla_{\cL^{-1}} ,  \cV_\mydot \otimes \cL^{-1})$, which clearly satisfies the oper
conditions (1) and (2). By restriction of $\cW$ to a fiber $\{ x \} \times S$, we show that the induced isomorphisms between quotients of the
filtration by the $\cO_S$-linear connection $\nabla_\cW$ give a splitting of the filtration $\cW_{\mydot | \{ x \} \times S}$, hence
an isomorphism $\cW_{| \{ x \} \times S} \cong \cO_S^{\oplus r }$ for any $x \in X$. This implies that 
$\cW \map{\sim} p_X^* \cG_r(\theta)$ and therefore the connection $\nabla_\cW$ induces a map $\phi : S \ra \mathrm{Conn}^0(\cG_r(\theta))$, i.e.
an object of $\mathrm{Conn}^0(\cG_r(\theta)) (S)$. The map $\tau_\theta$ then associates to the quadruple $(\cV, \nabla, \cV_\mydot, \alpha)$
the pair $(\phi, (\cL, \beta))$. An inverse map to $\tau_\theta$ is naturally constructed by choosing a connection on $\cG_r(\theta)$.
\end{proof}

\bigskip

In particular, for any theta-characteristic $\theta$ we obtain via the above isomorphism a map
$$ \sigma_\theta : \mathrm{Conn}^0(\cG_r(\theta)) \ra \Op_{\SL(r)}. $$
Note that the affine space $\mathrm{Conn}^0(\cG_r(\theta))$ does not depend on the theta-characteristic
$\theta$, since for any two theta-characteristics $\theta$ and $\theta'$ there exists a 
canonical isomorphism $\mathrm{Conn}^0(\cG_r(\theta)) \cong \mathrm{Conn}^0(\cG_r(\theta'))$.

\bigskip
In \cite{BD1} section 3.1 the authors introduce the notion of $\PGL(r)$-oper and show that the stack of $\PGL(r)$-opers
$\Op_{\PGL(r)}$ is an affine space (\cite{BD1} Proposition 3.1.10). Since our results can be stated in terms of
vector bundles only, we decided to avoid introducing $G$-opers for a general group $G$ in order to keep our paper more
accessible. We refer the interested reader to \cite{BD1} and \cite{BD2}. For completeness, we mention that
the natural group homomorphism $\SL(r) \ra \PGL(r)$ induces a projection
$$ pr:  \Op_{\SL(r)} \lra \Op_{\PGL(r)}.$$
In \cite{BD1} sections 3.4.1 and 3.4.2 the authors construct a section of $pr$. More precisely, they show that
for any theta-characteristic $\theta$, there is an isomorphism
$$ \phi_\theta : \Op_{\PGL(r)} \times \mathcal{T}_r \map{\sim}  \Op_{\SL(r)}. $$
In Proposition \ref{isooperslr} we reformulated the above isomorphism in terms of vector bundles. In 
particular, for any theta-characteristic $\theta$ we obtain an isomorphism
$$ pr \circ \sigma_\theta :  \mathrm{Conn}^0(\cG_r(\theta)) \map{\sim} \Op_{\PGL(r)}, $$
which does not depend on the theta-caracteristic (see \cite{BD1} section 3.4.2). Although we have not
defined the space $\Op_{\PGL(r)}$ in this paper, we shall however use in the sequel $\Op_{\PGL(r)}$ as a
notation for the affine space $\mathrm{Conn}^0(\cG_r(\theta))$.

\bigskip

Although not needed in this paper, we mention that, as a corollary to Proposition \ref{isooperslr}, the group
$JX[r]$ acts on the stack $\Op_{\SL(r)}$ via tensor product with $r$-torsion local systems. The set of connected
components of $\Op_{\SL(r)}$ is in one-to-one correspondence with $JX[r]$.

\bigskip

\subsection{The Hitchin-Mochizuki morphism}

We consider the morphism of stacks 
\begin{equation} \label{charpsi}
\Loc_{\GL(r)} \lra \oplus_{i=1}^r H^0(X, F^*(\Omega_X^1)^{\otimes i}), \qquad
(V, \nabla) \mapsto \mathrm{Char}(\psi(V, \nabla)),
\end{equation}
where $\mathrm{Char}(\psi(V, \nabla))$ denotes the vector in
$\oplus_{i=1}^r H^0(X, F^*(\Omega_X^1)^{\otimes i})$ whose components are given
by the coefficients of the characteristic polynomial of the
$p$-curvature $\psi(V,\nabla) : V \ra V \otimes F^*(\Omega_X^1)$. It is shown in
\cite{laszlo-pauly} Proposition 3.2 that the components  of $\mathrm{Char}(\psi(V, \nabla))$
descend under the Frobenius morphism. This implies that the morphism  \eqref{charpsi} factorizes as
$$ \Loc_{\GL(r)} \map{\Phi} V_r := \oplus_{i=1}^r H^0(X, (\Omega_X^1)^{\otimes i})
\map{F^*} \oplus_{i=1}^r H^0(X, F^*(\Omega_X^1)^{\otimes i}).$$

\begin{prop}
The image of the composition of the natural map $\Loc_{\SL(r)} \ra \Loc_{\GL(r)}$ with $\Phi$ 
is contained in the subspace ${\bf W}_r \subset {\bf V}_r$.
\end{prop}

\begin{proof}
Let $S$ be a $k$-scheme and let $(\cV, \nabla, \alpha)$ be  an object of the groupoid $\Loc_{\SL(r)}(S)$. By the relative version
of Proposition \ref{proppcurv} (i) and (iii), we obtain that $\mathrm{Tr}[ \psi(\cV, \nabla) ] = \psi(\det \cV, \det \nabla) = 0$
in $\Mor ( S, H^0(\Omega_X^1))$.
\end{proof}

We also denote the composite map by $\Phi: \Loc_{\SL(r)} \ra {\bf W}_r$.

\bigskip

Given a theta-characteristic $\theta$ on $X$, we consider the composite morphism
$$ \HM : \Op_{\PGL(r)} \map{\sigma_\theta} \Op_{\SL(r)} \lra \Loc_{\SL(r)} \map{\Phi}
{\bf W}_r,$$
which we call the {\em Hitchin-Mochizuki} morphism. By the above considerations  the
Hitchin-Mochizuki morphism  can be identified with a self-map of the affine space of dimension
$(g-1)(r^2-1)$.

\begin{prop} \label{HMI}
The morphism $\HM$ does not depend on the theta-characteristic $\theta$.
\end{prop}

\begin{proof}
Given two theta-characteristics $\theta$ and $\theta'$ on $X$, we can write
$\theta' = \theta \otimes \alpha$ with $\alpha$ a $2$-torsion line bundle on $X$.
By \cite{BD1} section 3.4.2 there exists an $r$-torsion line bundle $L$ (take $L = \cO_X$ if $r$ is odd and $L = \alpha$ if $r$ is 
even) such that  $\sigma_{\theta'}  = T_L \circ \sigma_\theta$, where
$T_L$ is the automorphism of $\Op_{\SL(r)}(X)$ induced by tensor product with
the $r$-torsion local system $(L, \nabla_L)$. Here $\nabla_L$ is the unique connection on $L$ such that
$(L, \nabla_L)^{\otimes r} = (\cO_X, d)$. Now the result follows because,
by Proposition \ref{proppcurv} (ii) and by Proposition \ref{pcurvtorsion}, we
have for any local system $(V, \nabla_V)$
$$ \psi(V \otimes L, \nabla_V \otimes \nabla_L ) = \psi(V,\nabla_V) +
\mathrm{Id}_V \otimes \psi(L, \nabla_L) = \psi(V, \nabla_V).$$
\end{proof}

\subsection{Semistability}
We recall the notion of semistability, introduced in \cite{Si}, for
a local system, i.e., a pair $(V, \nabla)$, where $V$ is a vector
bundle over $X$ and $\nabla$ is a connection on $V$.

\begin{defi}
We say that the local system
$(V, \nabla)$ is {\em semistable} (resp. {\em stable}) if any proper $\nabla$-invariant 
subsheaf $W \subset V$ satisfies $\mu(W) \leq \mu(V)$ (resp. $\mu(W) < \mu(V)$).
\end{defi}


In the particular case of a local system $(V, \nabla)$ with $\psi(V, \nabla) = 0$ the
semistability condition can be expressed as follows. In that case,
by Cartier's theorem (Theorem \ref{Cartierthm}), the local system
$(V, \nabla)$ is of the form $(V,\nabla) = (F^*(E), \nabla^{can})$ for some vector bundle $E$ over $X$. Then, since
$\nabla^{can}$-invariant subsheaves of $F^*(E)$ correspond to subsheaves of $E$, the local system $(F^*(E), \nabla^{can})$ is
semistable if and only if $E$ is semistable.

\begin{lem} \label{lemoperss}
Let $(V, \nabla, V_\mydot)$ be an oper of any type with $Q = V_0/V_1$ and let $W
\subset V$ be a $\nabla$-invariant subbundle. We consider the induced filtration
$W_\mydot$ on $W$ defined by $W_i = W \cap V_i$ and denote by $m$ the
integer satisfying $W_m \not= 0$ and $W_{m+1} = 0$. Then
\begin{itemize}
\item[(i)] there is an inclusion $W_0 /W_1 \hookrightarrow Q$ and the connection $\nabla$ induces sheaf inclusions
$$ W_i / W_{i+1} \hookrightarrow  \left( W_{i-1}/ W_i \right) \otimes \Omega_X^1.$$
\item[(ii)] we have a decreasing sequence of integers
\begin{equation}\label{ineqranksW}
\rk(Q) = q \geq r_0\geq r_1\geq \cdots \geq r_m\geq 1 \qquad \text{and} \qquad
\sum_{i = 0}^m r_{i}= \rk(W).
\end{equation}
with $r_i = \rk(W_i / W_{i+1})$ for $0 \leq i \leq m$.
\end{itemize}
\end{lem}

\begin{proof}
Since $W$ is $\nabla$-invariant, the connection $\nabla$ induces the horizontal maps
of the following commutative diagram
$$
\begin{CD}
W_i/W_{i+1} @>>>  \left(W_{i-1}/W_i\right)\otimes \Omega_X^{1} \\
@VVV   @VVV \\
  V_i/V_{i+1} @>>>  \left(V_{i-1}/V_i\right)\otimes \Omega_X^{1}
\end{CD}
$$
Since the vertical arrows are inclusions and the lower horizontal map
is an isomorphism (by Definition \ref{defioper}), we obtain part (i). Part (ii)
follows immediately from part (i).
\end{proof}

\begin{prop} \label{stabilityoper}
Let $(V, \nabla, V_\mydot)$ be an oper of any type with $Q = V_0/V_1$.
If $Q$ is semistable (resp. stable), then the
local system $(V, \nabla)$ is semistable (resp. stable).
\end{prop}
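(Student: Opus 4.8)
The plan is to test semistability directly against the connection by cutting an arbitrary invariant subsheaf by the oper flag. Let $W\subset V$ be a proper nonzero $\nabla$-invariant subsheaf and set $W_i=W\cap V_i$, producing a filtration $0=W_l\subset\cdots\subset W_0=W$ whose graded pieces $\bar W_i:=W_i/W_{i+1}$ inject into $\bar V_i:=V_i/V_{i+1}\cong Q\otimes(\Omega_X^1)^{\otimes i}$ by \eqref{quotientoper}. Since $W$ is $\nabla$-invariant and $\nabla(V_i)\subset V_{i-1}\otimes\Omega_X^1$, intersecting the two containments inside $V\otimes\Omega_X^1$ gives $\nabla(W_i)\subset W_{i-1}\otimes\Omega_X^1$, so $\nabla$ induces maps $\bar W_i\to\bar W_{i-1}\otimes\Omega_X^1$ compatible with the oper isomorphisms $\bar V_i\xrightarrow{\sim}\bar V_{i-1}\otimes\Omega_X^1$ of Definition \ref{defioper}. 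Being the restriction of an isomorphism to a subsheaf, each induced map $\bar W_i\hookrightarrow\bar W_{i-1}\otimes\Omega_X^1$ is injective. The first payoff is that the ranks $w_i:=\rk(\bar W_i)$ are non-increasing: $w_0\ge w_1\ge\cdots\ge w_{l-1}\ge 0$.

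Next I would convert this into a slope inequality. Writing $\deg\Omega_X^1=2g-2$, semistability of $Q$ (hence of each line-bundle twist $\bar V_i$) gives $\mu(\bar W_i)\le\mu(\bar V_i)=\mu(Q)+i(2g-2)$, whence $\deg\bar W_i\le w_i\mu(Q)+i\,w_i(2g-2)$. Summing over $i$ and using $\deg W=\sum_i\deg\bar W_i$, $\rk W=\sum_i w_i$ yields $\deg W\le\mu(Q)\rk W+(2g-2)\sum_i i\,w_i$. On the other hand \eqref{degoper} gives $\mu(V)=\mu(Q)+(l-1)(g-1)$, so the desired bound $\mu(W)\le\mu(V)$ reduces to the purely combinatorial inequality $\sum_{i=0}^{l-1}(2i-(l-1))w_i\le 0$. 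Since the coefficients $2i-(l-1)$ sum to zero and are antisymmetric under $i\mapsto l-1-i$, pairing the $i$-th and $(l-1-i)$-th terms and using that $w_i$ is non-increasing makes every pair $\le 0$; this settles semistability.

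For stability I would track the equality cases. Strictness fails in the slope step only if $\mu(\bar W_i)=\mu(\bar V_i)$ for every $i$ with $w_i>0$; when $Q$ is stable this forces each nonzero $\bar W_i$ to equal all of $\bar V_i$ (so $w_i\in\{0,q\}$ with $q=\rk Q$), and the injections $\bar W_i\hookrightarrow\bar W_{i-1}\otimes\Omega_X^1$ then propagate $w_i=q$ downward, so the rank vector is $q,\dots,q,0,\dots,0$ with a single jump at some index $i_0$. In that case $\sum_i(2i-(l-1))w_i=q\,i_0(i_0-l)$, which is strictly negative exactly when $0<i_0<l$, i.e. precisely when $W$ is proper and nonzero; so the combinatorial step is strict and $\mu(W)<\mu(V)$. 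In the complementary situation some slope inequality is already strict, again giving $\mu(W)<\mu(V)$.

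The main obstacle I anticipate is the bookkeeping at the two ends: verifying that $W_i=W\cap V_i$ really does produce an injection $\bar W_i\hookrightarrow\bar W_{i-1}\otimes\Omega_X^1$, that is, that the image of the oper map lands in the $W$-graded piece and not merely in $\bar V_{i-1}\otimes\Omega_X^1$, since this monotonicity of ranks is what drives both the symmetric-pairing inequality and the rigidity used for stability. Everything after that is elementary degree accounting and the pairing estimate, so I expect no further difficulty.
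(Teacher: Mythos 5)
Your proposal is correct and follows essentially the same route as the paper: the paper's Lemma \ref{lemoperss} establishes exactly your injections $\bar W_i\hookrightarrow\bar W_{i-1}\otimes\Omega_X^1$ and the resulting non-increasing ranks, and the semistability then reduces, as in your argument, to the symmetric pairing inequality $\sum_i(m-2i)(r_i-r_{m-i})\geq 0$. The only differences are cosmetic (the paper truncates the sum at the last nonzero graded piece rather than carrying trailing zeros, and treats the equality analysis for stability more briefly than your explicit $q,\dots,q,0,\dots,0$ rigidity argument).
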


\begin{proof}
We will use Lemma~\ref{lemoperss} and its notational setup. Combining the inclusions of Lemma \ref{lemoperss} (i), we obtain for any $0 \leq i \leq m$
$$ W_i/W_{i+1} \hookrightarrow Q \otimes (\Omega_X^1)^{\otimes i}. $$
Hence, if $Q$ is semistable, we have the inequality $\mu(W_i/W_{i+1}) \leq \mu(Q) +
i(2g-2)$ and we obtain
$$ \deg(W) = \sum_{i=0}^m r_i \mu(W_i/W_{i+1}) \leq \rk(W) \mu(Q) + (2g-2)
\left( \sum_{i=0}^m ir_i \right) $$
or, equivalently,
$$ \mu(W) \leq \mu(Q) + \frac{2g-2}{\rk(W)} \left( \sum_{i=0}^m ir_i \right) $$
By relation  \eqref{degoper} we have $\mu(V) = \mu(Q) + (l-1)(g-1)$, so the
semistability condition $\mu(W) \leq \mu(V)$ will be implied by the inequality
$$ 2 \left( \sum_{i=0}^m ir_i \right) \leq \rk(W) (l-1).$$
Obviously the length $m$ of the induced filtration on $W$ is bounded by $l-1$, hence
it suffices to show that $2 \left( \sum_{i=0}^m ir_i \right) \leq \rk(W)m$. But the
latter inequality reduces to
$$ \rk(W) m -  2 \left( \sum_{i=0}^m ir_i \right) = \sum_{i=0}^m (m-2i) r_i
= \sum_{i=0}^{[\frac{m}{2}]} (m-2i)(r_i - r_{m-i}) \geq 0, $$
which holds because of the inequalities \eqref{ineqranksW}.


Finally, if $Q$ is stable, then one easily deduces from the preceding inequalities that
equality $\mu(W) = \mu(V)$ holds if and only if $W = V$. Hence $(V,\nabla)$ is stable.
\end{proof}


Although we will not use the next result, we mention an interesting corollary (due to X. Sun)

\begin{cor} [\cite{sun06} Theorem 2.2]
Let $E$ be a vector bundle over $X$. If $E$ is semistable (resp. stable), then the
direct image under the Frobenius morphism  $F_*(E)$ is semistable (resp. stable).
\end{cor}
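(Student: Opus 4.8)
The plan is to realize $F_*(E)$ as the bundle underlying a dormant oper whose first quotient is $E$ itself, and then transport semistability from $E$ to that oper and finally down to $F_*(E)$. Concretely, by Theorem~\ref{exampleoper} the triple $(V, \nabla^{can}, V_\mydot)$ with $V = F^*(F_*(E))$ is a dormant oper of type $\rk(E)$ and length $p$, and its first quotient is $Q = V_0/V_1 = E$. This is the key input: it exhibits $F^*(F_*(E))$ as an oper whose quotient $Q$ we can control directly by the hypothesis on $E$.

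Assume first that $E$ is semistable. Then $Q = E$ is semistable, so Proposition~\ref{stabilityoper}, which applies to opers of \emph{any} type, tells us that the local system $(V, \nabla^{can}) = (F^*(F_*(E)), \nabla^{can})$ is semistable. It then remains to descend this semistability from the local system to $F_*(E)$ itself. Since $(V, \nabla^{can})$ has $p$-curvature zero by construction (it is a dormant oper), Cartier's theorem (Theorem~\ref{Cartierthm}) identifies it with $(F^*(E'), \nabla^{can})$ for $E' = F_*(E)$, and $\nabla^{can}$-invariant subsheaves of $F^*(E')$ correspond exactly to subsheaves of $E'$. Hence, as recorded in the discussion preceding Proposition~\ref{stabilityoper}, the local system $(F^*(F_*(E)), \nabla^{can})$ is semistable if and only if $F_*(E)$ is semistable. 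Combining this equivalence with the previous step yields the semistability of $F_*(E)$.

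For the stable case one runs the identical argument, replacing ``semistable'' by ``stable'' at each step: if $E$ is stable then $Q = E$ is stable, Proposition~\ref{stabilityoper} gives stability of $(F^*(F_*(E)), \nabla^{can})$, and the descent criterion (stability of $(F^*(E'),\nabla^{can})$ is equivalent to stability of $E'$) produces the stability of $F_*(E)$.

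There is essentially no genuine obstacle here: the entire content has already been packaged into Theorem~\ref{exampleoper} and Proposition~\ref{stabilityoper}, and the argument is a matter of assembling them in the right order. The only points worth verifying are that the hypotheses line up correctly, namely that $Q = E$ under the identification of Theorem~\ref{exampleoper}, and that Proposition~\ref{stabilityoper} is indeed formulated for opers of arbitrary type rather than only type $1$ (so that it applies to the oper of type $\rk(E)$ produced above) — both of which hold as stated.
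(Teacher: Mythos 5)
Your proof is correct and follows essentially the same route as the paper, which simply applies Proposition \ref{stabilityoper} to the dormant oper $(F^*(F_*(E)),\nabla^{can},V_\mydot)$ of Theorem \ref{exampleoper} with $V_0/V_1=E$. The only difference is that you spell out explicitly the final descent step (via Cartier's theorem, semistability/stability of the local system $(F^*(F_*(E)),\nabla^{can})$ is equivalent to that of $F_*(E)$), which the paper leaves implicit.
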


\begin{proof}
It suffices to apply Proposition \ref{stabilityoper} to the oper
$(F^*(F_*(E)), \nabla^{can} , V_\mydot)$, where $V_\mydot$ is the
canonical filtration, which satisfies $V_0/ V_1 = E$ (see Theorem~\ref{exampleoper}).
\end{proof}

\begin{rem}
 We note that Proposition \ref{stabilityoper} and Lemma \ref{lemoperss}
are essentially a reformulation of X. Sun's arguments in the set-up of opers.
\end{rem}


\section{Quot-schemes and Frobenius-destabilized vector bundles}


\subsection{Statement of the results}
Let $r \geq 2$ be an integer and put
$$C(r,g) = r(r-1)(r-2)(g-1).$$
The purpose of this section is to show the following

\begin{thm} \label{QuotFrob}
Let $X$ be a smooth, projective curve of genus $g\geq 2$ over an
algebraically closed field $k$ of characteristic $p>0$. If $p > \crg$, then we have


\begin{enumerate}

\item Every stable and Frobenius-destabilized vector bundle $V$ of rank
$r$ and slope $\mu(V)=\mu$ over $X$ is a subsheaf $V\hookrightarrow F_*(Q)$
for some stable vector bundle $Q$ of rank  $\rk(Q)<r$ and $\mu(Q) < p\mu$.

\item Conversely, given a semistable vector bundle $Q$ with $\rk(Q)<r$ and $\mu(Q) < p\mu$, every
subsheaf $V\hookrightarrow F_*(Q)$ of rank $\rk(V)=r$ and slope
$\mu(V)=\mu$ is semistable and destabilized by Frobenius.
\end{enumerate}

\end{thm}

In the case when the degree of the Frobenius-destabilized bundle equals $0$, we will prove the
following refinement of Theorem \ref{QuotFrob}.

\begin{thm} \label{QuotFrobdegree0}
Let $X$ be a smooth, projective curve of genus $g \geq 2$ over an
algebraically closed field $k$ of characteristic $p$.
If $p > \crg$, then every stable and Frobenius-destabilized vector bundle
$V$ of rank $r$ and of degree $0$ over $X$ is a subsheaf
$$V\hookrightarrow F_*(Q)$$
for some stable vector bundle $Q$ of rank $\rk(Q)<r$ and degree $\deg(Q)=-1$.
\end{thm}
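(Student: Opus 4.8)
The plan is to deduce this refinement from Theorem~\ref{QuotFrob} by an \emph{upper modification} of the auxiliary bundle $Q$ that pushes its degree up to the optimal value $-1$, while preserving both stability and the embedding. First I would apply Theorem~\ref{QuotFrob}(1) to $V$. Since $\deg(V)=0$ we have $\mu(V)=0$, so the theorem produces a stable bundle $Q_0$ with $q:=\rk(Q_0)<r$, an embedding $V\hookrightarrow F_*(Q_0)$, and the slope bound $\mu(Q_0)<p\,\mu(V)=0$. As $\deg(Q_0)$ is an integer this forces $\deg(Q_0)\leq -1$. This also explains why $-1$ is the best one can hope for: the strict inequality $\mu(Q_0)<0$ is precisely the Harder--Narasimhan slope constraint referred to in the introduction, so $\deg(Q_0)=0$ is excluded and $\deg(Q)=-1$ is the largest degree compatible with Theorem~\ref{QuotFrob}. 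If $\deg(Q_0)=-1$ we are already done, so I may assume $\deg(Q_0)\leq -2$.

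The key step is the following standard fact about elementary modifications, which I would isolate as a lemma: \emph{a stable bundle $Q_0$ of rank $q$ and degree $d$ admits, for every integer $d'>d$, a stable bundle $Q$ of rank $q$ and degree $d'$ containing $Q_0$ with torsion quotient $Q/Q_0$ of length $d'-d$.} I would prove this by adding one point at a time. An elementary upper modification $Q_0\subset Q_1$ with $Q_1/Q_0\cong\cO_x$ at a point $x$ is parametrized by a projective space of hyperplanes in the fibre $Q_0(x)$, and stability is an open condition on this family. For $q=1$ every such modification is a line bundle and hence automatically stable; for $q\geq 2$ a general modification is stable. Indeed, if a saturated subsheaf $S\subset Q_1$ of rank $s<q$ destabilized $Q_1$, then, writing $S_0=S\cap Q_0$, one checks that $S$ must satisfy $\deg(S)=\deg(S_0)+1$ with $\mu(S_0)$ forced to lie within $1/s$ of $\mu(Q_0)$; the crude slope estimate alone does \emph{not} rule this out, but the subsheaves of $Q_0$ of rank $s$ and slope this close to $\mu(Q_0)$ form a bounded family, and for a general choice of $x$ and of the hyperplane none of them extends across the modification. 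Iterating $d'-d$ times produces the required $Q$.

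Applying the lemma with $d=\deg(Q_0)\leq -2$ and $d'=-1$ yields a stable bundle $Q$ of rank $q<r$ and degree $-1$ together with an inclusion $Q_0\subset Q$. Since $F:X\to X$ is a finite morphism, $F_*$ is exact, so the inclusion $Q_0\hookrightarrow Q$ induces an inclusion $F_*(Q_0)\hookrightarrow F_*(Q)$; composing with $V\hookrightarrow F_*(Q_0)$ gives the desired subsheaf $V\hookrightarrow F_*(Q)$, where $\rk(V)=r<pq=\rk F_*(Q)$. The degree bookkeeping (for instance that $\mu(Q)=-1/q<0$ remains consistent with Lemma~\ref{degFQ}) is immediate.

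I expect the only genuine obstacle to be the stability of the modified bundle $Q$. As the computation above shows, a single elementary modification does not automatically remain stable, so one must invoke genericity of the modification point and direction together with a boundedness statement for the near-destabilizing subsheaves of $Q_0$. Everything else --- the extraction of $Q_0$ from Theorem~\ref{QuotFrob}, the identification of $-1$ as the optimal degree, and the preservation of the embedding under the exact functor $F_*$ --- is routine.
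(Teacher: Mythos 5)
Your strategy is genuinely different from the paper's, and it contains a real gap at exactly the point you flag as the ``only genuine obstacle.'' The key lemma --- that every stable bundle $Q_0$ of rank $q\geq 2$ admits a stable one-point upper modification --- is not established by your sketch, and it is not a routine genericity statement. Your own computation shows that a destabilizing subsheaf $S\subset Q_1$ of rank $s$ restricts to a saturated subsheaf $\bar S_0\subset Q_0$ whose slope lies within $1/s$ of the stability bound; boundedness of the family of such $\bar S_0$ is not enough. What you actually need is that the union, over this family, of the bad loci (for a fixed $\bar S_0$ of rank $s$, the modifications $(x,[\epsilon])$ with $\epsilon\in \bar S_0(x)$ form an $s$-dimensional subvariety of the $q$-dimensional space of one-point modifications) fails to cover; this requires the family of nearly-destabilizing subsheaves to have dimension $<q-s$, and for special stable bundles the scheme of maximal subbundles can be positive-dimensional, so genericity of $x$ and of the hyperplane does not obviously suffice. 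This is precisely the difficulty that the notion of $(0,1)$-stability of Narasimhan--Ramanan was designed to handle: a one-point Hecke modification of a $(0,1)$-stable bundle is stable, but an arbitrary stable bundle need not be $(0,1)$-stable. A further problem is that if an intermediate modification comes out only strictly semistable, your induction from $\deg(Q_0)\leq -2$ up to $-1$ breaks down, since the slope estimates at the next step use stability. (The parts of your argument surrounding the lemma --- extracting $Q_0$ from Theorem~\ref{QuotFrob}(1), exactness of $F_*$ for the finite morphism $F$, and the observation that at degree $-1$ semistable implies stable --- are all fine.)

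The paper avoids elementary modifications of $Q$ entirely and works instead on the $F^*(V)$ side. It proves a purely sheaf-theoretic lemma: any non-semistable bundle $E$ of degree $0$ admits a nonzero map to a semistable bundle $Q$ with $\rk(Q)<\rk(E)$ and $\deg(Q)=-1$ (equivalently, dually, $E^*$ contains a semistable subsheaf of degree $+1$), established by a double induction on rank and degree using negative elementary transforms and Harder--Narasimhan pieces --- an argument that never needs to preserve stability under a modification, only to locate \emph{some} semistable subsheaf of the right degree. Applying this to $F^*(V)$ and then rerunning the adjunction-plus-Proposition~\ref{slopesubbundlesFQ} injectivity argument from Theorem~\ref{QuotFrob}(1) gives the embedding $V\hookrightarrow F_*(Q)$ directly, with $Q$ automatically stable since $\gcd(q,1)=1$. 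If you want to salvage your route, you would need either to prove the Hecke-modification lemma for \emph{every} stable bundle (not just a general one) or to replace it by an argument, like the paper's, that only asserts existence of a suitable semistable quotient.
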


\subsection{Proof of Theorem \ref{QuotFrob}}

Let $V$ be a stable and Frobenius-destabilized vector bundle of rank $r$ and slope $\mu(V)=\mu$.
Consider the first quotient $Q$ of the Harder-Narasimhan filtration of $F^*(V)$. If $Q$ is not
stable, we replace $Q$ by a stable quotient. This shows the existence of a stable
vector bundle $Q$
such that
$$F^*(V)\to Q \qquad \text{and} \qquad p\mu=\mu(F^*(V)) > \mu(Q).$$
Moreover $\rk(Q) < \rk(V)=r$. By
adjunction we obtain a non-zero map
$$V\to F_*(Q).$$
Thus to prove the Theorem \ref{QuotFrob}(1) it will suffice to
prove that this map will be an injection.

Suppose that this is not the case. Then the image of $V\to F_*(Q)$
generates a subbundle, say, $W\subset F_*(Q)$ and one has $1\leq
\rk(W)\leq r-1$ and by the stability of $V$, we have
$$\mu(V)=\mu<\mu(W).$$
Now we observe that we can bound $\mu(W)$ from below
$$ \mu(W) \geq \mu  + \frac{1}{r(r-1)} > \frac{\mu(Q)}{p} +  \frac{1}{r(r-1)}.$$
The proof of Theorem \ref{QuotFrob}(1)
will now follow from Proposition \ref{slopesubbundlesFQ} below applied with $\delta = \frac{1}{r(r-1)}$ and
$n= r-1$.


Let us also indicate how to deduce Theorem \ref{QuotFrob}(2)
from Proposition \ref{slopesubbundlesFQ} as well. Let $V\subset
F_*(Q)$ be a subsheaf of $\rk(V)=r$ and $\mu(V)=\mu$ with
$\mu(Q)/p \leq \mu$. If $V$ is not semi-stable, then there exists a
subsheaf $W\subset V$ with $\mu(W)>\mu(V)$ and,  more precisely,
$\mu(W) \geq \mu(V) + \frac{1}{r(r-1)}$. Moreover $W\subset
V\subset F_*(Q)$.  Replacing $W$ by a subbundle generated by it in
$F_*(Q)$, we see that we have a subbundle $W \subset F_*(Q)$ with
$\mu(W) \geq \frac{\mu(Q)}{p}  + \frac{1}{r(r-1)}$. We can now apply
Proposition \ref{slopesubbundlesFQ} with $\delta = \frac{1}{r(r-1)}$ and
$n= r-1$ to obtain a contradiction. Hence $V$ is semistable. The fact that
$V$ is Frobenius-destabilized follows immediately by adjunction. This completes
the proof of Theorem \ref{QuotFrob}.


\begin{prop} \label{slopesubbundlesFQ}
Let $Q$ be a semistable vector bundle over the curve $X$. Let $\delta>0$ be a real number and let $n$ be a positive
integer. Assume that $p > \frac{(n-1)(g-1)}{\delta}$. Then any subbundle $W \subset
F_*(Q)$ of rank $\rk(W) \leq n$ has slope
$$\mu(W) < \frac{\mu(Q)}{p} + \delta.$$
\end{prop}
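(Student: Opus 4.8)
```latex
The plan is to use Sun's oper-filtration estimate on $F_*(Q)$ to bound the slope of an arbitrary subbundle $W \subset F_*(Q)$ in terms of the slope of $Q$. The starting point is Theorem \ref{exampleoper}, which exhibits the dormant oper $(F^*(F_*(Q)), \nabla^{can}, V_\mydot)$ of type $q = \rk(Q)$ and length $p$, with first quotient $V_0/V_1 = Q$. Since $Q$ is semistable, the oper flag $V_\mydot$ is the Harder-Narasimhan filtration of $V = F^*(F_*(Q))$ (Remark \ref{flagHN}), and the graded pieces are $V_i/V_{i+1} \cong Q \otimes (\Omega_X^1)^{\otimes i}$ of slope $\mu(Q) + i(2g-2)$ for $0 \leq i \leq p-1$ by \eqref{quotientoper}.

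The key step is to relate $\mu(W)$ for $W \subset F_*(Q)$ to the data of this filtration. By adjunction a subbundle $W \subset F_*(Q)$ corresponds to a $\nabla^{can}$-invariant subbundle $\widetilde{W} = F^*(W) \subset V$, or more directly one analyzes $W$ via the filtration $W$ inherits from pulling back. First I would apply Lemma \ref{lemoperss} to this invariant subbundle: writing $r_i = \rk(W_i/W_{i+1})$ for the induced filtration, we get the decreasing sequence of ranks $q \geq r_0 \geq r_1 \geq \cdots \geq r_m \geq 1$ with $\sum r_i = \rk F^*(W) = p\,\rk(W)$ and inclusions $W_i/W_{i+1} \hookrightarrow Q \otimes (\Omega_X^1)^{\otimes i}$. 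Semistability of $Q$ then bounds $\mu(W_i/W_{i+1}) \leq \mu(Q) + i(2g-2)$. This is precisely Sun's formula (eq.~\ref{formulasun}); the slope of $W$ (equivalently of $F^*(W)$, up to the factor $p$) is controlled by $\mu(Q)$ plus a weighted average of the twist exponents $i$.

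The crux is then a numerical optimization: one must show that under the rank constraints $r_i \leq q$, $r_i \leq r_{i-1}$, and $\sum r_i = p\,\rk(W)$ with $\rk(W) \leq n$, the quantity $\sum_i i\, r_i$ cannot be too large, so that the resulting slope bound beats $\frac{\mu(Q)}{p} + \delta$. The worst case for the upper bound on $\mu(W)$ is to push rank into the high-twist pieces, i.e. to make $m$ as large as possible; but the constraint $r_i \leq q$ together with $\sum r_i \leq p\,n$ forces $m$ to be of size roughly $pn/q$, and feeding this into the slope estimate produces an error term of order $\frac{(n-1)(g-1)}{p}$ over the target $\frac{\mu(Q)}{p}$. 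The hypothesis $p > \frac{(n-1)(g-1)}{\delta}$ is exactly what makes this error term strictly smaller than $\delta$, giving the strict inequality.

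The main obstacle I anticipate is carrying out the optimization cleanly enough to extract the sharp constant $(n-1)(g-1)$ rather than a cruder bound. The subtlety is that one wants the \emph{strict} inequality $\mu(W) < \frac{\mu(Q)}{p} + \delta$, so the optimization must be shown to be maximized at a configuration whose value is strictly below the threshold; the extremal rank configuration (all $r_i$ equal to $q$ until the rank budget is exhausted) must be analyzed carefully, and one must confirm that the bound on $\sum i\, r_i$ is attained exactly at this "staircase" profile. Dividing through by $p$ (to pass from $F^*(W)$ back to $W$) and invoking the hypothesis on $p$ should then close the argument, but getting the constant right in the weighted sum is where the real bookkeeping lies.
```
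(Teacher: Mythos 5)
Your overall strategy is the same as the paper's: pull back to the dormant oper $(F^*(F_*(Q)),\nabla^{can},V_\mydot)$, induce a filtration on $F^*(W)$ via Lemma \ref{lemoperss}, feed the rank data into Sun's formula \eqref{formulasun}, and reduce to maximizing $\sum_i i\,r_i$ under the constraints on the $r_i$. However, the setup of that optimization contains two concrete errors that would prevent the bookkeeping you defer from producing the constant $(n-1)(g-1)$. First, $\rk(F^*(W)) = \rk(W) = w$, not $p\,\rk(W)$: Frobenius pull-back preserves rank (it is the degree, hence the slope, that gets multiplied by $p$, and it is $F_*$ that multiplies rank by $p$). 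So the constraint is $\sum_{i=0}^m r_i = w \le n$, which forces $m \le w-1 \le n-1$; your claim that $m$ can be of size roughly $pn/q$ is off by a factor of $p$, and with a budget of $pn$ the error term in the slope bound would be of order $n(g-1)$ rather than $(n-1)(g-1)/p$, so the hypothesis on $p$ would not close the argument.

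Second, the extremal configuration is the opposite of the one you name. To maximize $S(r_\mydot)=\sum_i i\,r_i$ subject to $q \ge r_0 \ge \cdots \ge r_m \ge 1$ and $\sum_i r_i = w$, you want the mass pushed to large $i$, which under the monotonicity constraint means spreading as thinly as possible: the maximum $S = \tfrac{w(w-1)}{2}$ is attained exactly at $m = w-1$ and $r_0=\cdots=r_m=1$ (this is the content of the lemma inside the paper's proof). The ``staircase'' profile with $r_i = q$ until the budget is exhausted minimizes $m$ and gives a much smaller $S$, so confirming that the bound is attained there --- which you identify as the crux --- would fail. With the corrected count $\sum r_i = w$ and the corrected maximizer, Sun's formula gives
$\mu(W) \le \tfrac{\mu(Q)}{p} + (g-1)\tfrac{w-1}{p}$,
and the hypothesis $p > \tfrac{(n-1)(g-1)}{\delta}$ then yields the strict inequality; but as written your optimization is set up over the wrong feasible region and aimed at the wrong extremal point.
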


\begin{proof}
We recall from Theorem~\ref{exampleoper} that the triple
$(F^*(F_*(Q)),\nabla^{can},V_\mydot)$ is a dormant oper
of type $q = \rk(Q)$. Let $W \subset F_*(Q)$ be a subbundle. Then let $W_0=F^*(W) \subset
F^*(F_*(Q))= V$ and consider the induced flag on $W_0$, i.e.
$W_i=W_0\cap V_i$. We now apply Lemma \ref{lemoperss} (i) to the subbundle $W_0 \subset
V$ and, using the notation of that lemma, we have
\begin{equation}\label{constraint1}
q\geq r_0\geq r_1\geq \cdots \geq r_m\geq 1 \qquad \text{and} \qquad
\sum_{i = 0}^m r_{i}= \rk(W)=w.
\end{equation}
It is shown in \cite{sun06} formula (2.12) that
\begin{equation} \label{formulasun}
\mu(F_*(Q))\geq
\mu(W)+\frac{2(g-1)}{pw}\sum_{i=0}^m\left(\frac{p-1}{2}-i \right)r_i.
\end{equation}
We will bound the sum on the right from below to obtain a bound on
$\mu(F_*(Q))-\mu(W)$. This will be a substantial strengthening of the
results of \cite{JRXY}, \cite{lange-pauly08}  and \cite{joshi04}.

We have the equalities
\begin{eqnarray*}
\sum_{i=0}^m\left(\frac{p-1}{2}-i\right)r_i&=&\sum_{i=0}^m\frac{p-1}{2}r_i-\sum_{i=0}^m i
r_i\\
&=&\frac{p-1}{2}\sum_{i=0}^mr_i-\sum_{i=0}^m i r_i,\\
&=&w\frac{p-1}{2}-\sum_{i=0}^m i r_i.
\end{eqnarray*}
So to bound the expression on the right from below, it will suffice
to bound the sum $\sum_{i=0}^m i r_i$ from above subject
to constraints \eqref{constraint1}. This is done in the following

\begin{lem}
For any sequence of positive integers
$r_\mydot = (r_0, r_1, \ldots, r_m)$ satisfying the constraints \eqref{constraint1}
we have the inequality
$$ S(r_\mydot) = \sum_{i=0}^{m} i r_{i} \leq \frac{w(w-1)}{2}.$$
Moreover, we have equality if and only if $m= w-1$ and $r_0 = \cdots = r_m = 1$.
\end{lem}

\begin{proof}
Given a sequence $r_\bullet$ we introduce the sequence of integers $s_\mydot = (s_0, \ldots , s_{m+1})$ with $s_i \geq 0$
defined by the relations
$$ r_i = 1 + s_{m+1} + s_m + \ldots + s_{i +1} \qquad \text{for} \ 0 \leq i \leq m, \qquad \text{and}
\qquad r_0 + s_0 = q.$$
We compute $S(r_\mydot) = \frac{m(m+1)}{2} + \sum_{i = 1}^{m+1} \frac{i(i - 1)}{2} s_i$.
The problem
is then equivalent to determine an upper bound for $S(r_\mydot)$, where the integers $s_\mydot$ are subject to
the two constraints $\sum_{i = 0}^{m+1} s_i = q-1$ and $\sum_{i = 1}^{m+1} i s_i = w - (m+1)$.
Since $s_i \geq 0$ and $\frac{i(i -1)}{2} \leq \frac{i m}{2}$ for $1 \leq i \leq m+1$, we obtain
the inequality
$$ \sum_{i =1}^{m+1} \frac{i(i -1)}{2} s_i  \leq \sum_{i = 1}^{m+1} \frac{m}{2} i s_i =
\frac{m}{2} ( w - (m+1)),$$
hence $S(r_\bullet) \leq \frac{mw}{2}$. If we vary $m$ in the range $0 \leq m \leq w-1$, we observe that the
maximum is obtained for $m= w -1$ and that equality holds for $s_1 = \ldots = s_{m+1} = 0, \ s_0 = q-1$, i.e.
$r_0 = \cdots = r_m = 1$.
\end{proof}

Combining the previous lemma with inequality \eqref{formulasun}, we obtain
$$ \mu(F_*(Q)) \geq
\mu(W)+\frac{2(g-1)}{pw}\left[w\frac{p-1}{2}-\frac{w(w-1)}{2}\right] = \mu(W)+(g-1)\left(1-\frac{w}{p}\right).
$$
As by Lemma \ref{degFQ} we have $\mu(F_*(Q)) = \frac{\mu(Q)}{p} + (g-1)(1 - \frac{1}{p})$, we obtain the inequality
$$ \mu(W) \leq \frac{\mu(Q)}{p} + (g-1)\left( \frac{w-1}{p}  \right).$$
Hence if $\frac{(g-1)(w-1)}{p} < \delta$, or equivalently $p > \frac{(w-1)(g-1)}{\delta}$, we
obtain the desired inequality.
\end{proof}

\subsection{Proof of Theorem \ref{QuotFrobdegree0}}

Let $V$ be a Frobenius-destabilized vector bundle of degree $0$ and rank $r$. We will need a lemma.

\begin{lem}
Let $E$ be a non-semistable vector bundle with $\deg E = 0$. Then there exists a semistable
vector bundle $Q$ with $\deg(Q) = -1$ and $\rk(Q) < \rk(E)$ and such that $\Hom(E, Q) \not= 0$.
\end{lem}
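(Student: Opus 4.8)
The plan is to realize $Q$ as the minimal-slope quotient of $E$, suitably modified so as to have degree exactly $-1$. Since $E$ is non-semistable of degree $0$, its Harder-Narasimhan filtration is non-trivial; let $G$ denote the minimal-slope graded quotient, so that there is a surjection $E \twoheadrightarrow G$, the bundle $G$ is semistable of slope $\mu(G) = \mu_{\min}(E) < \mu(E) = 0$, and $\rk(G) < \rk(E)$. Replacing $G$ by a stable quotient of the same slope (the last term of a Jordan-Hölder filtration), which retains the surjection from $E$ and only lowers the rank, I may assume that $G$ is stable. In particular $\deg(G)$ is an integer which is $\le -1$. If $\deg(G) = -1$ there is nothing to do: I take $Q = G$, which is semistable of degree $-1$ and rank $< \rk(E)$, and the surjection $E \twoheadrightarrow G = Q$ shows $\Hom(E,Q) \neq 0$.

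It remains to treat the case $\deg(G) = -d$ with $d \geq 2$. Here I would raise the degree of $G$ up to $-1$ by an upper elementary (Hecke) modification: I choose $d-1$ distinct points $x_1, \dots, x_{d-1}$ of $X$ together with a general torsion quotient of length $d-1$ supported on them, producing an inclusion $G \hookrightarrow Q$ with $Q/G$ torsion of length $d-1$, hence $\deg(Q) = \deg(G) + (d-1) = -1$ and $\rk(Q) = \rk(G) < \rk(E)$. The composite $E \twoheadrightarrow G \hookrightarrow Q$ is nonzero, so $\Hom(E, Q) \neq 0$ no matter how the modification is chosen. Thus the whole problem reduces to arranging that $Q$ be semistable.

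The semistability of $Q$ is the only real point, and I expect it to be the main obstacle. For a proper subsheaf $W \subset Q$, intersecting with $G$ gives $W \cap G \subset G$ of rank $\rk(W)$ and, by stability of $G$, of slope $\le \mu(G)$; moreover $\deg(W) = \deg(W \cap G) + \ell$, where $\ell$ is the length of the torsion sheaf $W/(W\cap G)\hookrightarrow Q/G$, i.e. the part of the modification absorbed by $W$. A subsheaf $W$ that violates $\mu(W) < \mu(Q) = -1/\rk(G)$ must have $\deg(W) \ge 0$, whence $\deg(W \cap G)$ lies in the fixed finite range $[-(d-1),0]$; by Grothendieck's boundedness the subsheaves of $G$ subject to this constraint form a bounded family. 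A dimension count should then show that for a general choice of the points $x_i$ and of the modification directions no such destabilizing $W$ arises, so that the generic $Q$ is semistable; equivalently, one performs the $d-1$ modifications one at a time, each a general single Hecke modification of a stable bundle, and checks that stability is preserved at every step. This produces a semistable $Q$ of degree $-1$, of rank $< \rk(E)$, with $\Hom(E,Q) \neq 0$, completing the argument. The delicate part is precisely the control of the low-rank subsheaves that could absorb a disproportionate part of the modification relative to their rank, and making this genericity count effective (rather than merely existential) is where the argument needs the most care.
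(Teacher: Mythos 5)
Your reduction is fine up to the point where $\deg(G)=-d$ with $d\ge 2$: taking $G$ to be a stable quotient of minimal slope gives the nonzero map $E\twoheadrightarrow G$ for free, and the case $d=1$ is done. But the proof stops exactly where the lemma becomes nontrivial. The assertion that a general length-$(d-1)$ upper Hecke modification $G\hookrightarrow Q$ of a stable bundle is semistable is not a routine verification, and you do not carry it out. The numerics never exclude destabilizing subsheaves: for a saturated $W\subset Q$ of rank $s<q=\rk(G)$ one only gets $\deg(W)\le\deg(W\cap G)+(d-1)$ together with $\mu(W\cap G)<\mu(G)$, and these are compatible with $\mu(W)\ge\mu(Q)$ for every $s$. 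So everything rests on genericity, and the naive dimension count does not obviously close: the subsheaves $W'\subset G$ with degree in the critical window can move in positive-dimensional families (maximal subbundles of a special stable bundle can form families of dimension of order $s(q-s)(g-1)$), and the union of the corresponding bad loci $\mathbb{P}(W'_x)\subset\mathbb{P}(G_x)$ can a priori sweep out the whole space of modification directions; ruling this out requires genuine additional input (control of the incidence correspondence as $x$ and the directions vary). That missing step is of comparable difficulty to the lemma itself.

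The paper's proof avoids genericity entirely, and the contrast is instructive. It dualizes: it suffices to produce a semistable subsheaf $S\subset E^{*}$ of degree $1$ and rank $<\rk(E)$, and then take $Q=S^{*}$. The maximal destabilizing subsheaf of $E^{*}$ is semistable of positive degree and smaller rank, and one runs a double induction on the pair $(\mathrm{rank},\mathrm{degree})$ of semistable subsheaves: given a semistable subsheaf $W$ of rank $r_0$ and degree $d+1$, either some colength-one \emph{lower} modification $W'\subset W$ is semistable (and the degree has dropped by one), or every such $W'$ is unstable, in which case the maximal destabilizing subsheaf $M\subset W'$ satisfies $\mu(M)>\mu(W')>0$, hence $\deg(M)\ge 1$ and $\rk(M)<r_0$, and one recurses at strictly smaller rank. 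Every step is an existence statement about a single explicitly constructed sheaf, with no general-position argument. If you want to keep your route, you must actually prove the preservation of semistability under general elementary transformations; otherwise the argument is incomplete.
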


\begin{proof}
It suffices to prove the existence of a semistable subsheaf $S \subset V= E^*$ with $\deg S = 1$ and
then take the dual. We will prove that by a double induction. Given two positive integers $r$ and $d$
we introduce the induction hypothesis $H(r,d)$: ``If $V$ contains a semistable subsheaf of rank $r$ and
degree $d$, then $V$ contains a semistable subsheaf of degree $1$".

We will show that $H(r,d)$ holds for any $1 \leq r < \rk(E)$ and any $d \geq 1$.  We first observe that $H(r,1)$ holds
trivially for any $r$, and $H(1,d)$ holds for any $d$, because there exist degree $1$ and rank $1$ subsheaves of any degree
$d$ line bundle.


We now suppose that $H(r,d)$ holds for any pair $(r,d)$ with $1 \leq r < r_0$ and we will deduce that $H(r_0,d)$
holds for any $d \geq 1$. As observed above $H(r_0,1)$ holds. So assume that $H(r_0, d)$ holds. We will
show that $H(r_0, d+1)$ also holds. So assume that $V$ contains a semistable subsheaf $W$ of rank $r$ and
degree $d+1$. Two cases can occur\footnote{We are grateful to Peter Newstead for having pointed out this fact.}:
either there exists a semistable negative elementary transform $W' \subset W$ of colength one (hence a general
negative elementary transform is semistable by openness of semistability) or any such $W'$
is non-semistable.  In the first case we can take a semistable elementary transform $W'$, which has degree $d$,
and apply $H(r_0,d)$, which holds by induction. In the second case we pick any elementary transform $W'$ and
consider the first piece $M \subset W'$ of its Harder-Narasimhan filtration. Then it follows from the definition
of $M$ that
$$ \frac{d}{r_0} = \mu(W') < \mu(M) \qquad \text{and} \qquad \rk(M) < r_0.$$
In particular, $\deg(M) \geq 1$. So we can apply $H(\rk(M), \deg(M))$, which holds by induction, and we are done.
\end{proof}


We apply the lemma to the non-semistable bundle $V = F^*(E)$ and obtain a non-zero map
$$ F^*(E) \rightarrow Q \qquad \text{with} \qquad \deg(Q) = -1, \ \rk(Q) \leq r-1.$$
We now continue as in the proof of Theorem \ref{QuotFrob}(1) to show that the map $E \rightarrow
F_*(Q)$ obtained by adjunction is injective.


\subsection{Loci of Frobenius-destabilized vector bundles} \label{loci}

Let $\cM(r)$ denote the coarse moduli space of $S$-equivalence classes of semistable bundles
of rank $r$ and degree $0$ over the curve $X$.

We wish to define the closed subscheme of $\cM(r)$ which parameterizes Frobenius-destabilized bundles. The construction is as follows. We recall the construction of $\cM(r)$ as a projective variety via Geometric Invariant Theory. There exists a quasi-projective variety $R$ and an action of 
$PGL(N)$ on $R$ such that the quotient is $\cM(r)$. On $R$, we have a universal family of semistable bundles on $X$. So on $R$ we may apply
a theorem of S. Shatz (see \ref{thmshatz} or \cite{shatz}) which asserts that the locus of points of $R$ where 
the Harder-Narasimhan polynomial jumps is closed in $R$. We will abuse notation by identifying a 
semistable bundle $V$ on $X$ with the corresponding point $[V]\in R$. Let $Z\subset R$ be the locus of 
points $V\in R$, where $F^*(V)$ has Harder-Narasimhan polygon lying strictly above the slope zero polygon of rank $r$. 
Then $Z$ is closed by Shatz' Theorem and invariant under the action of $PGL(N)$, since,
given a strictly semistable bundle $E$ with associated graded $\mathrm{gr}(E) =
E_1 \oplus \cdots \oplus E_l$ with $E_i$ stable, one observes that $E$ is
Frobenius-destabilized if and only if at least one of the stable summands $E_i$ is
Frobenius-destabilized. Then let $\cJ(r)$ be the image of $Z$ under the quotient morphism $R\to \cM(r)$. As $\cM(r)$ is a good quotient, the image of $Z$ is closed. So $\cJ(r)\subset\cM(r)$ is closed.
Moreover, $\cJ(r)$ is a closed subvariety of $\cM(r)$. Let $\cJ^s(r) \subset \cJ(r)$ be the open subset corresponding to stable bundles. This is the image of under the above quotient map, of the corresponding subset $Z^s\subset Z$ with $V$ stable.
We will refer to $$ \cJ(r) \subset \cM(r) $$
as the locus of semistable bundles $E$ which are destabilized by Frobenius pull-back, i.e.
$F^*(E)$ is not semistable.


Let $1 \leq q \leq r-1$ be an integer and let $\cM(q,-1)$ be the moduli
space of semistable bundles of rank $q$ and degree $-1$ over the curve $X$. As $\mathrm{gcd}(q,-1) = 1$ we are in the
coprime case and so every semistable bundle $Q \in \cM(q,-1)$ is stable. In
particular, we see that there exists a universal Poincar\'e bundle $\cU$
on $\cM(q,-1) \times X$. Let
$$ \alpha: \QQuot(q,r,0) := \mathrm{Quot}^{r,0}((F \times \mathrm{id}_{\cM(q,-1)})_* \cU)
\lra \cM(q,-1) $$
be the relative Quot-scheme \cite{Gro} over $\cM(q,-1)$. The fibre $\alpha^{-1}(Q)$
over a point $Q \in \cM(q,-1)$ equals $\mathrm{Quot}^{r,0}(F_*(Q))$. We note that
$\QQuot(q,r,0)$ is a proper scheme \cite{Gro}.


We are now ready to restate Theorems \ref{QuotFrobdegree0} and \ref{QuotFrob}(2)  in a geometrical set-up.

\begin{thm} \label{locusJ}
If $p > \crg$, then the image of the forgetful morphism
$$ \pi : \coprod_{q=1}^{r-1} \QQuot(q,r,0) \lra \cM(r), \qquad [E \subset F_*(Q)] \mapsto E $$
is contained in the locus $\cJ(r)$ and contains the closure $\overline{\cJ^s(r)}$
of the stable locus $\cJ^s(r)$.
\end{thm}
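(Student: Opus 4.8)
The plan is to establish the two containments in Theorem~\ref{locusJ} separately, using the set-theoretic results already proved in Theorems~\ref{QuotFrob} and~\ref{QuotFrobdegree0}. First I would address why $\pi$ is a morphism at all: the relative Quot-scheme $\QQuot(q,r,0)$ carries a universal subsheaf of $(F \times \mathrm{id})_* \cU$, and by base change this restricts over each point to an inclusion $E \hookrightarrow F_*(Q)$; the construction $[E \subset F_*(Q)] \mapsto E$ thus gives a family of rank-$r$ degree-$0$ vector bundles on $X$, which by the semistability statement of Theorem~\ref{QuotFrob}(2) consists of semistable bundles. Since $\cM(r)$ is a coarse moduli space, this family induces the desired morphism $\pi$ to $\cM(r)$, and its image lands in the semistable locus.

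Next I would verify that the image of $\pi$ is contained in $\cJ(r)$. For a point $[E \subset F_*(Q)] \in \QQuot(q,r,0)$, the bundle $Q$ is stable (being a point of the coprime moduli space $\cM(q,-1)$, as remarked in the text), so $\mu(Q) = -1/q < 0 = p\mu(E)$ and $\rk(Q) = q < r$. Theorem~\ref{QuotFrob}(2), applied with the semistable (indeed stable) bundle $Q$, shows precisely that any such subsheaf $E \hookrightarrow F_*(Q)$ of rank $r$ and slope $0$ is semistable and Frobenius-destabilized. Hence $F^*(E)$ is not semistable, i.e.\ $E \in \cJ(r)$, and so $\pi(\QQuot(q,r,0)) \subset \cJ(r)$ for each $q$, giving the first assertion.

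For the reverse containment, the surjectivity onto $\overline{\cJ^s(r)}$, I would first show that every stable Frobenius-destabilized bundle of degree $0$ lies in the set-theoretic image of $\pi$. This is exactly the content of Theorem~\ref{QuotFrobdegree0}: given a stable, Frobenius-destabilized $E$ of rank $r$ and degree $0$, there exists a stable $Q$ with $\deg(Q) = -1$, $\rk(Q) = q < r$, and an injection $E \hookrightarrow F_*(Q)$. Such a point is recorded in $\QQuot(q,r,0)$ and maps under $\pi$ to (the $S$-equivalence class of) $E$. Therefore $\cJ^s(r) \subset \pi\bigl(\coprod_q \QQuot(q,r,0)\bigr)$ set-theoretically. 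To upgrade this from $\cJ^s(r)$ to its closure $\overline{\cJ^s(r)}$, I would invoke the properness of $\QQuot(q,r,0)$ noted in the text (from~\cite{Gro}): the source is proper, $\cM(r)$ is separated, so the image of $\pi$ is a closed subset of $\cM(r)$ containing $\cJ^s(r)$, and hence contains its closure $\overline{\cJ^s(r)}$.

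The main obstacle I anticipate is the passage from a pointwise construction to an honest morphism of schemes and the careful handling of coarse-moduli subtleties: strictly semistable bundles in the image correspond only to $S$-equivalence classes, so one must check that the universal family on $\QQuot(q,r,0) \times X$ genuinely descends to a map into the coarse space $\cM(r)$ (using that the family is a family of semistable bundles and appealing to the universal property of the coarse moduli space). The remaining steps — the two slope and rank inequalities feeding into Theorem~\ref{QuotFrob}(2), and the properness argument closing up the image — are essentially bookkeeping on top of the already-established destabilization theorems, so I do not expect them to present real difficulty.
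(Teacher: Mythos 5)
Your proposal is correct and follows exactly the route the paper intends: the paper presents Theorem~\ref{locusJ} explicitly as a ``restatement of Theorems \ref{QuotFrobdegree0} and \ref{QuotFrob}(2) in a geometrical set-up,'' with the containment in $\cJ(r)$ coming from Theorem~\ref{QuotFrob}(2), the inclusion of $\cJ^s(r)$ from Theorem~\ref{QuotFrobdegree0}, and the passage to the closure from the properness of $\QQuot(q,r,0)$ recorded from \cite{Gro}. Your added care about the universal family inducing a morphism to the coarse moduli space is a reasonable elaboration of a point the paper leaves implicit.
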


\begin{ques}
For $r\geq 3$, is the locus $\cJ(r)$ equal to the closure $\overline{\cJ^s(r)}$
in $\cM(r)$? In other words, does any irreducible component of $\cJ(r)$ contain
stable bundles?
\end{ques}


\subsection{Maximal degree of subbundles of $F_*(Q)$}

The next proposition will not be used in this paper.

\begin{prop} \label{maxdegree}
Let $Q$ be a semistable vector bundle of rank $q = \rk{Q}$ satisfying
$$ q < r < pq \qquad \text{and} \qquad -(r-q)(g-1) \leq \deg(Q) < 0. $$
If $p > r(r-1)(g-1)$, then the maximal degree of rank-$r$ subbundles of $F_*(Q)$ equals $0$.
In particular, any subsheaf of $F_*(Q)$ of degree $0$ is a subbundle.
\end{prop}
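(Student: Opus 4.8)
The plan is to prove the asserted equality by separating it into an upper bound (every rank-$r$ subbundle has degree $\leq 0$) and an existence statement (some rank-$r$ subbundle has degree exactly $0$), and then to read off the final ``in particular'' clause from a saturation argument.

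First I would obtain the upper bound as a direct application of Proposition \ref{slopesubbundlesFQ}. The point is to calibrate the parameters so that the stated hypothesis on $p$ is exactly the one required there: taking $n = r$ and $\delta = \frac{1}{r}$, the condition $p > \frac{(n-1)(g-1)}{\delta}$ becomes precisely $p > r(r-1)(g-1)$. Since $Q$ is semistable, the proposition then applies and gives, for every subbundle $W \subset F_*(Q)$ with $\rk(W) \leq r$, the strict bound $\mu(W) < \frac{\mu(Q)}{p} + \frac{1}{r}$. Now the hypothesis $\deg(Q) < 0$ forces $\frac{\mu(Q)}{p} < 0$, so $\mu(W) < \frac{1}{r}$, whence $\deg(W) < 1$ and therefore $\deg(W) \leq 0$ because $\deg(W)$ is an integer. (Here $r < pq = \rk(F_*(Q))$ ensures we are speaking of proper subbundles.)

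For the existence of a degree-$0$ subbundle I would invoke Proposition \ref{nonemptyQuot}: the hypotheses $q < r < pq$ and $\deg(Q) \geq -(r-q)(g-1)$ guarantee $\mathrm{Quot}^{r,0}(F_*(Q)) \neq \emptyset$, so there is a rank-$r$ subsheaf $V \subset F_*(Q)$ with $\deg(V) = 0$. Let $\widetilde{V}$ denote its saturation, i.e. the subbundle of $F_*(Q)$ generated by $V$; then $\rk(\widetilde{V}) = r$ and $\deg(\widetilde{V}) \geq \deg(V) = 0$, with equality if and only if $V = \widetilde{V}$. By the upper bound already established, $\deg(\widetilde{V}) \leq 0$, so $\deg(\widetilde{V}) = 0 = \deg(V)$ and hence $V = \widetilde{V}$ is itself a subbundle of degree $0$. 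Combining this with the upper bound shows that the maximal degree of rank-$r$ subbundles of $F_*(Q)$ is exactly $0$.

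The final assertion follows by the same saturation argument applied to an arbitrary subsheaf $V' \subset F_*(Q)$ of rank $r$ and degree $0$: its saturation has degree $\geq 0$ in general and $\leq 0$ by the upper bound, hence degree $0$, so $V'$ coincides with its saturation and is a subbundle. The one genuinely delicate point — and the step I expect to carry the argument — is the calibration $\delta = \frac{1}{r}$ in Proposition \ref{slopesubbundlesFQ}: this is exactly what makes the proposition usable under the prescribed bound $p > r(r-1)(g-1)$ while still yielding the \emph{strict} inequality $\mu(W) < \frac{1}{r}$ that is needed to pass from a slope estimate to the integral conclusion $\deg(W) \leq 0$, and it is here that the strict negativity $\deg(Q) < 0$ is used.
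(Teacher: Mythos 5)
Your proof is correct and follows essentially the same route as the paper: the upper bound comes from Proposition \ref{slopesubbundlesFQ} with exactly the calibration $n=r$, $\delta=\tfrac{1}{r}$ (so that $p>r(r-1)(g-1)$ is what is needed and integrality of $\deg(W)$ gives $\deg(W)\leq 0$), while existence comes from Proposition \ref{nonemptyQuot}. The only difference is that you spell out the saturation argument that the paper leaves implicit; this is a welcome clarification rather than a departure.
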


\begin{proof}
By Proposition \ref{nonemptyQuot} we know that $\mathrm{Quot}^{r,0}(F_*(Q)) \not= \emptyset$, hence
the maximal degree of rank-$r$ subbundles is at least $0$. It is therefore enough to show
that any rank-$r$ subbundle $W \subset F_*(Q)$ satisfies $\mu(W) \leq 0$. We apply
Proposition \ref{slopesubbundlesFQ} with $n = r$ and $\delta = \frac{1}{r}$, which leads to
$\mu(W) < \frac{1}{r}$, hence $\mu(W) \leq 0$.
\end{proof}


\section{Harder-Narasimhan polygons of local systems}


\subsection{Harder-Narasimhan filtration}\label{HNdef}

Given a vector bundle $V$ over $X$ we consider its Harder-Narasimhan filtration
$$ V_\bullet^{HN} : \qquad  0 = V_l \varsubsetneq V_{l-1}   \varsubsetneq \cdots \varsubsetneq V_1 \varsubsetneq V_0 = V$$
and we denote for $1 \leq i \leq l$ the slopes of the successive quotients $\mu_i = \mu(V_{i-1}/ V_{i})$, which
satisfy
$$ \mu_{l} > \mu_{l-1} > \cdots > \mu_2 > \mu_1.$$
We will also use the notation
$$ \mu_{max}(V) = \mu_l \qquad \text{and} \qquad \mu_{min}(V) = \mu_1.$$
Then the numerical information $(\rk(V_i),\deg(V_i))$
associated to the Harder-Narasimhan flag can be conveniently
organized into a convex polygon in the plane $\RR^2$, denoted by $\sP_{V}$. It is the
convex polygon with vertices (or ``break points") at the
points $(\rk(V_i),\deg(V_i))$ for $0 \leq i \leq l$. The segment joining
$(\rk(V_{i}),\deg(V_{i}))$ and $(\rk(V_{i-1}),\deg(V_{i-1}))$ has slope
$\mu_{i}$.


We will use the following result (see \cite{laszlo-pauly} Lemma 4.2, \cite{sun99} Theorem 3.1 or \cite{shepherd} Corollary 2)

\begin{lem} \label{inequalityslopelocalsystem}
If the local system $(V, \nabla)$ is semistable, then we have for $1 \leq i \leq l-1$
$$ \mu_{i+1} - \mu_i \leq 2g-2.$$
\end{lem}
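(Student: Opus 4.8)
The plan is to argue by contradiction, exploiting the canonicity of the Harder--Narasimhan filtration together with the standard vanishing of homomorphisms between vector bundles whose slope ranges are disjoint. Suppose $\mu_{i+1} - \mu_i > 2g-2$ for some $1 \leq i \leq l-1$. I would focus on the subbundle $V_i \subset V$ occurring in $V_\bullet^{HN}$ and on the $\cO_X$-linear map induced by $\nabla$, namely the second fundamental form
$$ \bar\nabla_i : V_i \lra (V/V_i) \otimes \Omega_X^1, \qquad w \mapsto \nabla(w) \bmod V_i \otimes \Omega_X^1. $$
The first step records that $\bar\nabla_i$ is genuinely $\cO_X$-linear: for a local function $f$ and a local section $w$ of $V_i$ one has $\nabla(fw) = df \otimes w + f\nabla(w)$, and since $df \otimes w \in V_i \otimes \Omega_X^1$ the Leibniz term dies modulo $V_i \otimes \Omega_X^1$. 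Moreover $\bar\nabla_i = 0$ says exactly that $V_i$ is $\nabla$-invariant.

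The second step is a slope bookkeeping identifying the relevant extreme slopes. As $V_i \supset V_{i+1} \supset \cdots \supset V_l = 0$ has successive quotients of slopes $\mu_{i+1} < \cdots < \mu_l$, its minimal slope is $\mu_{\min}(V_i) = \mu_{i+1}$. Dually, $V/V_i$ has graded pieces of slopes $\mu_1 < \cdots < \mu_i$, with the top piece $V_{i-1}/V_i$ embedding as a subbundle, so $\mu_{\max}(V/V_i) = \mu_i$ and hence $\mu_{\max}((V/V_i)\otimes\Omega_X^1) = \mu_i + (2g-2)$. The key step is then the Hom-vanishing principle: any nonzero homomorphism of bundles on $X$ factors through a common subquotient, forcing $\mu_{\min}$ of the source to be $\leq \mu_{\max}$ of the target. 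Under the assumption $\mu_{i+1} - \mu_i > 2g-2$ we get $\mu_{\min}(V_i) = \mu_{i+1} > \mu_i + (2g-2) = \mu_{\max}((V/V_i)\otimes\Omega_X^1)$, so $\bar\nabla_i$ must vanish, and by the first step $V_i$ is $\nabla$-invariant.

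To close, I would contradict the semistability of the local system. Since the $V_i$ form the Harder--Narasimhan flag, their total slopes strictly decrease, $\mu(V_{l-1}) > \cdots > \mu(V_1) > \mu(V_0) = \mu(V)$, so every proper term satisfies $\mu(V_i) > \mu(V)$ for $1 \leq i \leq l-1$. Thus $V_i$ is a proper $\nabla$-invariant subsheaf with $\mu(V_i) > \mu(V)$, contradicting the $\nabla$-semistability of $(V,\nabla)$; hence $\mu_{i+1} - \mu_i \leq 2g-2$. The only point demanding care — and the step I would treat as the main obstacle — is ensuring that the destabilizing subsheaf $V_i$ is genuinely $\nabla$-invariant, so that the contradiction is with $\nabla$-semistability rather than with ordinary bundle semistability (which need not fail here); this is exactly what the combination of $\cO_X$-linearity of $\bar\nabla_i$ and the Hom-vanishing delivers.
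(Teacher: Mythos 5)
Your proof is correct and is essentially the standard argument: the paper does not prove Lemma \ref{inequalityslopelocalsystem} itself but cites it from \cite{laszlo-pauly}, \cite{sun99} and \cite{shepherd}, and your combination of the $\cO_X$-linearity of the second fundamental form $V_i \to (V/V_i)\otimes\Omega_X^1$ with the $\mu_{\min}/\mu_{\max}$ Hom-vanishing is exactly the argument in those references (and the same device the paper itself uses later in the proof of Theorem \ref{thmoperdominant}(2)). Your proof also confirms the paper's remark that the hypothesis $\psi(V,\nabla)=0$ made in \cite{sun99} and \cite{shepherd} is not needed, since you only use $\nabla$-semistability.
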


\begin{rem}
Note that \cite{sun99} and \cite{shepherd} make the assumption that $\psi(V, \nabla) = 0$.
In fact, this assumption is not needed in their proofs.
\end{rem}

\subsection{A theorem of Shatz}

The set of convex polygons in the plane $\RR^2$ starting at $(0,0)$ and ending at $(r,0)$ is partially ordered:
if $\sP_1,\sP_2$ are two such polygons, we say that $\sP_1 \succcurlyeq \sP_2$ if $\sP_1$ lies on or above $\sP_2$. We recall the
following well-known theorem by Shatz \cite{shatz}

\begin{thm} \label{thmshatz}
\begin{enumerate}
\item
Let $V$ be a family of vector bundles on $X\times T$ parameterized
by a scheme $T$ of finite type. For $t \in T$, we denote by $V_t =
V|_{X\times_Tk(t)}$ the restriction of $V$ to the fibre over $t$.
For any convex polygon $\sP$ the set
$$ S_{\sP} = \{ t \in T \ | \  \sP_{V_t}  \succcurlyeq  \sP \} $$
is closed in $T$ and the union of the subsets $S_\sP$ gives a stratification of $T$.
\item
Let $R$ be a discrete valuation ring and let $V$ be a family of
vector bundles parameterized by $\Spec(R)$, then
$$ \sP_{V_s} \succcurlyeq \sP_{V_\eta},$$
where $s$ and $\eta$ denote the closed and generic point of $\Spec(R)$ respectively.
\end{enumerate}
\end{thm}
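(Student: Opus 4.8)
The proof rests on expressing the polygon through sub-sheaves. The plan is first to record the following \emph{Main Lemma}: for a vector bundle $V$ of rank $r$ on $X$ and an integer $0 \leq m \leq r$, set
$$ \delta_V(m) = \max\{ \deg(W) \ : \ W \subseteq V \ \text{a subsheaf of rank} \ m \}. $$
The maximum is finite and attained, the function $m \mapsto \delta_V(m)$ is the concave upper boundary of all the points $(\rk W, \deg W)$ with $W \subseteq V$, and its graph is exactly the Harder--Narasimhan polygon $\sP_V$; equivalently, every subsheaf $W \subseteq V$ has $(\rk W, \deg W)$ lying on or below $\sP_V$, and the break points of $\sP_V$ are realised by the terms $V_i^{HN}$ of the Harder--Narasimhan filtration. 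I would prove this in the classical way, by projecting a subsheaf $W$ onto the graded pieces of the Harder--Narasimhan filtration and applying the slope bounds defining $\mu_{max}$ and $\mu_{min}$. Since both $\sP_V$ and any competitor $\sP$ start at $(0,0)$ and end at $(r, \deg V)$, the relation $\sP_V \succcurlyeq \sP$ is equivalent to $\delta_V(m) \geq \sP(m)$ for all integers $1 \leq m \leq r-1$.

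Granting the Main Lemma, I would prove part (2) directly, with no finiteness hypothesis on the base. Let $V$ be $R$-flat on $X \times \Spec(R)$. Extend each term $V_{i,\eta}$ of the Harder--Narasimhan filtration of the generic fibre to a subsheaf $\mathcal{V}_i \subset V$ with $V/\mathcal{V}_i$ flat over $R$ (take the schematic closure, i.e. saturate so that the quotient is $R$-torsion-free). Flatness keeps the Hilbert polynomial constant, so the restricted filtration $\mathcal{V}_{i,s} \subset V_s$ satisfies $\rk(\mathcal{V}_{i,s}) = \rk(V_{i,\eta})$ and $\deg(\mathcal{V}_{i,s}) = \deg(V_{i,\eta})$. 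Thus $V_s$ carries a filtration by subsheaves whose break points are precisely the vertices of $\sP_{V_\eta}$; by the Main Lemma these vertices lie on or below $\sP_{V_s}$, and since $\sP_{V_s}$ is concave the entire polygon $\sP_{V_\eta}$ lies on or below it, that is $\sP_{V_s} \succcurlyeq \sP_{V_\eta}$.

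For part (1), the plan is to realise $S_\sP$ as a finite intersection of images of proper relative Quot-schemes. Because $T$ is of finite type the family $\{V_t\}$ is bounded, so $\mu_{max}(V_t)$ is bounded above and $\mu_{min}(V_t)$ bounded below uniformly in $t$. Fix $m$ and $d$; then $\delta_{V_t}(m) \geq d$ holds if and only if $V_t$ admits a torsion-free quotient $U$ of rank $r-m$ with $\deg(U) \leq \deg(V_t) - d$, and boundedness confines the admissible values of $\deg(U)$ to a fixed finite range. For each admissible Hilbert polynomial the relative Quot-scheme of quotients is projective over $T$, so its image in $T$ is closed; the finite union of these images is the closed set $\{ t : \delta_{V_t}(m) \geq d \}$. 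By the Main Lemma,
$$ S_\sP = \bigcap_{m=1}^{r-1} \{ t \ : \ \delta_{V_t}(m) \geq \sP(m) \}, $$
a finite intersection of closed sets, hence closed. Boundedness also shows that only finitely many polygons occur, so the locally closed loci $\{ t : \sP_{V_t} = \sP \} = S_\sP \setminus \bigcup_{\sP' \succ \sP} S_{\sP'}$ partition $T$, giving the stratification. (Alternatively, one proves each stratum constructible by spreading out the Harder--Narasimhan filtration over a stratification of $T$, and then invokes part (2): a constructible subset stable under specialisation in a Noetherian scheme is closed.)

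The main obstacle is the Main Lemma combined with the uniform boundedness statement. Once the polygon is known to be governed by maximal-degree subsheaves, and once one checks that only finitely many Quot-schemes are relevant in a finite-type family so that their images genuinely assemble into a closed set, the properness of the Quot-scheme (already exploited elsewhere in the paper) does the rest of part (1); part (2) then follows from the flat extension of the generic Harder--Narasimhan filtration, noting that a general discrete valuation ring need not be of finite type over $k$, so part (2) must be argued directly rather than deduced from part (1).
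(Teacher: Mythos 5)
The paper itself offers no proof of this statement---it is recalled as a known theorem with a citation to Shatz's paper---so there is no internal argument to compare against; I can only assess your proposal on its own terms. Your part (2) is correct and is the standard argument: saturating the generic Harder--Narasimhan filtration so that the quotients are $R$-flat realises every vertex of $\sP_{V_\eta}$ by a subsheaf of $V_s$ of the same rank and degree, and these points lie on or below $\sP_{V_s}$ by the correct half of your Main Lemma; concavity of $\sP_{V_s}$ then forces $\sP_{V_s} \succcurlyeq \sP_{V_\eta}$.

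Your primary route to part (1), however, has a genuine gap. The Main Lemma overstates the relation between $\delta_V$ and the polygon: one has $\delta_V(m) \leq \sP_V(m)$ with equality at the \emph{break points} of $\sP_V$, but the graph of $m \mapsto \delta_V(m)$ is in general not equal to $\sP_V$ at intermediate ranks. Consequently the asserted equivalence ``$\sP_V \succcurlyeq \sP$ iff $\delta_V(m) \geq \sP(m)$ for all $m$'' is false, and with it the identity $S_\sP = \bigcap_{m}\{t : \delta_{V_t}(m) \geq \sP(m)\}$. Concretely, let $V_t$ be stable of rank $2$ and degree $0$ and let $\sP$ be the trivial polygon joining $(0,0)$ to $(2,0)$: then $\sP_{V_t} = \sP$, so $t \in S_\sP = T$, yet $\delta_{V_t}(1) \leq -1 < 0 = \sP(1)$. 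Your intersection of Quot-scheme images is therefore closed but in general strictly smaller than $S_\sP$, so closedness of $S_\sP$ does not follow. The Quot-scheme strategy can be repaired by writing $S_\sP$ as the finite union, over all integral convex polygons $\sP''$ with $\sP'' \succcurlyeq \sP$ lying in the bounded range, of the closed sets $\bigcap_{b}\{t : \delta_{V_t}(b) \geq \sP''(b)\}$, where $b$ runs over the vertices of $\sP''$ only: each such set is contained in $S_\sP$ (domination of a piecewise-linear polygon at its own vertices by a concave one implies domination everywhere), and every $t \in S_\sP$ lies in the set attached to $\sP'' = \sP_{V_t}$, where $\delta_{V_t}$ does attain the polygon. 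Alternatively, the parenthetical route you sketch---constructibility of the strata by spreading out the Harder--Narasimhan filtration, plus part (2) and the fact that a constructible set stable under specialisation is closed---is complete and is in fact Shatz's own argument; either repair yields the theorem.
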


\subsection{Oper-polygons are maximal}

Let $(V, \nabla, V_\bullet)$ be an oper of rank $r$, degree $0$ and type $1$. Then it follows
from \eqref{quotientoper} and \eqref{degoper} that $l = r$ and for $0 \leq i \leq r$
$$ \rk(V_i) = r-i, \qquad \deg(V_i) = i(r-i)(g-1).$$
We introduce the oper-polygon
$$\sP_r^{oper} : \qquad  \text{with vertices} \ \ (i,i(r-i)(g-1))
\ \text{for}  \ 0 \leq i \leq r.$$
In the next section we will see that the oper-polygon actually appears as
a Harder-Narasimhan polygon of some semistable local system. Our next result says that
oper-polygons are maximal among Harder-Narasimhan polygons of semistable local systems.

\begin{thm} \label{thmoperdominant}
Let $(V, \nabla)$ be a semistable local system of rank $r$ and degree $0$. Then
\begin{enumerate}
\item We have the inequality
$$ \sP_r^{oper} \succcurlyeq \sP_V. $$
\item The equality
$$ \sP_r^{oper} = \sP_V $$
holds if and only if the triple $(V, \nabla, V_\bullet^{HN})$ is an oper.
\end{enumerate}
\end{thm}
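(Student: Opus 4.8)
The plan is to convert the polygon comparison into inequalities on the Harder--Narasimhan slopes, feeding off the single structural input available, Lemma~\ref{inequalityslopelocalsystem}. Write the HN filtration as $0 = V_l \subset V_{l-1} \subset \cdots \subset V_0 = V$ with graded slopes $\mu_1 < \cdots < \mu_l$, and encode it by the slope profile $\sigma \colon \{1,\dots,r\} \to \RR$ that assigns to the $x$-th unit of rank the slope of the HN piece containing it. Then $\sigma$ is non-increasing (reading ranks left to right reads the slopes $\mu_l, \dots, \mu_1$), the height of $\sP_V$ at an integer $k$ equals $\sum_{x=1}^k \sigma(x)$, and $\sum_{x=1}^r \sigma(x) = \deg(V) = 0$. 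Since $\sigma$ is constant on each HN step and drops by $\mu_{j+1}-\mu_j \le 2g-2$ across each break, Lemma~\ref{inequalityslopelocalsystem} yields the uniform bound $\sigma(x)-\sigma(x+1)\le 2g-2$ for all $x$. As both polygons are piecewise linear with integer break points and share the endpoints $(0,0)$ and $(r,0)$, it suffices to prove $\sum_{x=1}^k \sigma(x) \le k(r-k)(g-1)$ for every integer $k$.

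For part (1) I would compare $\sigma$ to the oper profile $\tau(x) = (r-2x+1)(g-1)$, which is exactly the slope sequence of $\sP_r^{oper}$: a one-line check gives $\sum_{x=1}^k \tau(x) = k(r-k)(g-1)$ and $\tau(x)-\tau(x+1)=2g-2$. The decisive point is that $\tau$ \emph{saturates} the slope constraint, so $\delta := \sigma - \tau$ satisfies $\delta(x)-\delta(x+1) = (\sigma(x)-\sigma(x+1)) - (2g-2) \le 0$; that is, $\delta$ is non-decreasing, while $\sum_{x=1}^r \delta(x) = 0-0 = 0$. For a non-decreasing sequence summing to zero the partial averages are non-decreasing, hence $\frac1k\sum_{x=1}^k \delta(x) \le \frac1r\sum_{x=1}^r \delta(x) = 0$, giving $\sum_{x=1}^k \sigma(x) \le \sum_{x=1}^k \tau(x)$ for all $k$. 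This is precisely $\sP_r^{oper} \succcurlyeq \sP_V$.

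For the equality statement in part (2), the same computation shows $\sP_V = \sP_r^{oper}$ forces every partial sum of $\delta$ to vanish, so $\delta \equiv 0$ and $\sigma = \tau$. Thus the HN filtration has length $l=r$, each graded piece $L_i := V_i/V_{i+1}$ is a line bundle, and consecutive slope gaps equal \emph{exactly} $2g-2$. It remains to promote this numerical rigidity to the two conditions of Definition~\ref{defioper}. For the first I would study, for $1 \le i \le l-1$, the map $V_i \xrightarrow{\nabla} V\otimes\Omega_X^1 \twoheadrightarrow (V/V_{i-1})\otimes\Omega_X^1$; it is $\cO_X$-linear because the Leibniz term factors through the image of $V_i\subset V_{i-1}$, which dies. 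Its source has minimal slope $\mu_{i+1}$ while its target has maximal slope $\mu_{i-1}+2g-2 = \mu_{i+1}-(2g-2) < \mu_{i+1}$, so the map vanishes, giving $\nabla(V_i)\subset V_{i-1}\otimes\Omega_X^1$. For the second condition the induced second fundamental form $\bar\nabla_i \colon L_i \to L_{i-1}\otimes\Omega_X^1$ is a map of line bundles of equal degree (since $\deg L_{i-1}+2g-2 = \deg L_i$), hence an isomorphism once nonzero; and it cannot vanish, for otherwise $V_i$ would be a proper $\nabla$-invariant subsheaf with $\mu(V_i)>0=\mu(V)$, contradicting semistability. Hence $(V,\nabla,V_\bullet^{HN})$ is an oper. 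The converse is the direct computation recalled just before the theorem: a rank-$r$, degree-$0$ oper of type $1$ has $\rk(V_i)=r-i$ and $\deg(V_i)=i(r-i)(g-1)$.

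I expect the genuinely delicate part to be this passage from numerical equality to the oper structure in (2): both the vanishing $\nabla(V_i)\subset V_{i-1}\otimes\Omega_X^1$ and the non-vanishing of the forms $\bar\nabla_i$ rely on the slope gaps being \emph{exactly} $2g-2$ together with semistability, and one must also note that the flag is forced to have rank-one steps, i.e.\ the oper produced is of type $1$ (a type-$t$ oper with $t>1$ has strictly smaller polygon $\tfrac1t k(r-k)(g-1)$, so it could not give equality). By contrast, the inequalities of part (1) collapse to the clean, almost automatic observation that $\sigma-\tau$ is non-decreasing of total sum zero.
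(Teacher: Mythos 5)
Your argument is correct, and the two halves compare differently with the paper. Part (2) is essentially identical to the paper's proof: the same slope computation kills the composite $V_i \to (V/V_{i-1})\otimes\Omega_X^1$ (using that the gaps are exactly $2g-2$), and the same semistability argument forces the second fundamental forms $\tilde\nabla_i$ to be nonzero, hence isomorphisms of line bundles of equal degree; your explicit remark that equality forces $l=r$ with rank-one graded pieces, and that a type-$t$ oper with $t>1$ would give a strictly smaller polygon, is implicit in the paper but worth making visible. Part (1), however, takes a genuinely different and cleaner route. The paper proves the vertex inequalities $\deg(V_i)\leq (g-1)(n_1+\cdots+n_i)(n_{i+1}+\cdots+n_l)$ by a decreasing induction, summing weighted combinations of the inequalities $(\cE_k)$ of Lemma~\ref{inequalityslopelocalsystem} and then verifying an auxiliary combinatorial inequality via the substitution $m_i=n_i-1$. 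You instead linearize the whole problem: encode $\sP_V$ by the rank-indexed slope profile $\sigma$, observe that the oper profile $\tau(x)=(r-2x+1)(g-1)$ saturates the constraint $\sigma(x)-\sigma(x+1)\leq 2g-2$, and conclude from the elementary fact that a non-decreasing sequence with total sum zero has non-positive partial sums. Both proofs consume exactly the same input (Lemma~\ref{inequalityslopelocalsystem} plus $\deg V=0$), but your majorization argument replaces the paper's two-page induction with a few lines, makes transparent \emph{why} the oper polygon is extremal (it is the unique profile saturating all the slope constraints), and makes the rigidity needed for part (2) ($\delta\equiv 0$, hence gaps exactly $2g-2$ and rank-one steps) fall out for free rather than requiring a separate tracking of equality cases. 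The only points to spell out for completeness are the (easy) facts that comparing the two convex piecewise-linear polygons at integer abscissae suffices, and that the partial-average monotonicity holds as you state it.
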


\begin{proof}
Let $(V, \nabla)$ be a semistable local system of rank $r$ and degree $0$. Assume that $V$ is
not semistable. Let
$$ 0 = V_l \varsubsetneq V_{l-1}   \varsubsetneq \cdots \varsubsetneq V_1 \varsubsetneq V_0 = V$$
be the Harder-Narasimhan filtration of $V$. We denote $n_i = \rk (V_{i-1}/ V_i)$ for
$1 \leq i \leq l$, so that
$$ n_1 + n_2 + \cdots + n_l = r \qquad \text{and} \qquad \rk(V_i) = n_l + n_{l-1} + \cdots + n_{i+1}.$$
So in order to show the first part of the theorem, we have to prove that
\begin{equation} \label{ineqdeg}
 \deg(V_i) \leq (g-1) (n_1 + \cdots + n_i)(n_{i+1} + \cdots + n_l)
\end{equation}
holds for every $1 \leq i \leq l-1$. We also denote $\delta_i = \deg(V_{i-1}/ V_i)$, so that
$\mu_i = \mu(V_{i-1}/ V_i) = \frac{\delta_i}{n_i},$
$$ \delta_1 + \delta_2 + \cdots + \delta_l = 0 \qquad \text{and} \qquad
\deg(V_i) = \delta_l + \delta_{l-1} + \cdots + \delta_{i+1}.$$
The inequalities of Lemma \ref{inequalityslopelocalsystem} give for $1 \leq i \leq l-1$
$$
(\cE_i): \qquad \frac{\delta_{i+1}}{n_{i+1}} \leq \frac{\delta_i}{n_i} + 2g-2.
$$
We will prove \eqref{ineqdeg} by a decreasing induction on $i$. The first step is to establish that
\begin{equation} \label{degVl}
\delta_{l} = \deg(V_{l-1}) \leq (g-1) (n_1 + \cdots + n_{l-1})n_l.
\end{equation}
We consider for $k= l-1, \ldots, 1$ the inequality $(\cE_{l-1}) + (\cE_{l-2}) + \cdots + (\cE_k)$ multiplied by $n_{k}$:
\begin{eqnarray*}
  \frac{n_{l-1}}{n_l}\delta_l & \leq & \delta_{l-1} + n_{l-1}(2g-2), \\
  \frac{n_{l-2}}{n_l}\delta_l & \leq & \delta_{l-2} + 2n_{l-2}(2g-2), \\
  \vdots & \leq &  \vdots \\
  \frac{n_{1}}{n_l}\delta_l & \leq & \delta_{1}+ (l-1)n_{1}(2g-2).
\end{eqnarray*}
Now using $\sum_{i=1}^l\delta_i=0$ and adding up all these equations and
simplifying we get:
\begin{eqnarray*}
  \frac{n_1+\cdots+n_{l-1}}{n_l}\delta_l & \leq & -\delta_l +2(n_{l-1} +2 n_{l-2} +\cdots+(l-1)n_1)(g-1),\\
  \delta_l & \leq & n_l(g-1)(\frac{2}{r}(n_{l-1}+ \cdots+(l-1)n_1).
\end{eqnarray*}
Thus it remains to show that
$$2(n_{l-1} +2 n_{l-2} +\cdots+(l-1)n_1)\leq r(n_1+ n_2 + \cdots + n_{l-1}).$$
We introduce $m_i = n_i - 1 \geq 0$, and the previous inequality can be
written as follows:
$$2(m_{l-1} +2 m_{l-2} +\cdots+(l-1)m_1+(1+2+\cdots+(l-1)))\leq(l+m_1+\cdots+m_l)((l-1)+m_1+\cdots+m_{l-1}),$$
and after some simplification using $1+2+\cdots+(l-1)=\frac{l(l-1)}{2}$ we
get
\begin{eqnarray*}
2(m_{l-1} +2 m_{l-2} +\cdots+(l-1)m_1)& \leq &
l(m_1+\cdots+m_{l-1})+(l-1)(m_1+\cdots+m_l) \\
 &  & +(m_1+\cdots+m_{l-1})(m_1+\cdots+m_l).
\end{eqnarray*}
After a rearrangement we are reduced to proving
$$2(m_{l-1} +2 m_{l-2} +\cdots+(l-1)m_1)\leq
(2l-1)(m_1+\cdots+m_{l-1})+\text{non-negative terms}.$$
Now clearly this
holds as the inequality
$$2(m_{l-1} +2 m_{l-2} +\cdots+(l-1)m_1)\leq
(2l-1)(m_1+\cdots+m_{l-1})$$ visibly holds for all $l\geq 2$ as
$m_i\geq0$ and each term on the left is less than or equal to the
corresponding term on the right. This establishes \eqref{degVl}.


Now we will proceed by induction on $i$. So assume that if we have, for some $i$,
the inequality
\begin{equation} \label{ineqdegVi}
\deg(V_i) = \delta_l + \cdots + \delta_{i+1} \leq (g-1)(n_1+\cdots+n_i)(n_{i+1}+\cdots+n_l).
\end{equation}
Then we claim that we also have the corresponding inequality for
$i-1$. The new inequality which needs to be established is
\begin{equation} \label{ineqdegVi-1}
\deg(V_{i-1}) = \delta_l + \cdots + \delta_{i+1} + \delta_i \leq (g-1)(n_1+\cdots+n_{i-1})(n_{i}+\cdots+n_l).
\end{equation}
We begin with the following inequalities:
\begin{eqnarray*}
  \delta_{i} & = &\delta_{i}  \\
  \frac{n_{i-1}}{n_{i}}\delta_{i} &\leq & \delta_{i-1} + n_{i-1}(2g-2) \\
  \frac{n_{i-2}}{n_{i}}\delta_{i} &\leq & \delta_{i-2} + 2 n_{i-2}(2g-2)  \\
  \vdots & \leq & \vdots\\
  \frac{n_{1}}{n_{i}}\delta_{i}&\leq&\delta_{1}+(i-1)n_{1}(2g-2).
\end{eqnarray*}
obtained by doing the following operations
$n_{k}((\cE_{i-1})+ \cdots + (\cE_{k}))$ for $k = i-1, \ldots, 1$. Now we add all the
inequalities for $k = i-1, \ldots, 1$ and the equality
$\delta_{i}=\delta_{i}$ and we obtain:
\begin{equation*}
    \frac{n_{1} +\cdots+n_i}{n_{i}}\delta_{i}\leq
    (\delta_{1}+ \cdots + \delta_i)+(g-1)2(n_{i-1}+2n_{i-2}+\cdots+(i-1)n_1).
\end{equation*}
Multiplying this by $\frac{n_{i}}{n_{1} + \cdots + n_{i}}$ and using
$\delta_1 + \cdots + \delta_i = -(\delta_{i+1} + \cdots + \delta_l)$ we obtain
$$\delta_{i} + \frac{n_{i}}{n_{1}+\cdots+n_i}(\delta_{i+1} + \cdots + \delta_{l}) \leq
(g-1)2\frac{n_{i}(n_{i-1}+2n_{i-2}+ \cdots + (i-1)n_1)}{n_{1}+ \cdots + n_i}.$$
Next we multiply the inequality \eqref{ineqdegVi}  by
$\frac{n_{1} + \cdots + n_{i-1}}{n_{1}+\cdots+n_i}$ and add to the
previous inequality. We get
\begin{eqnarray*}
    \delta_i + \delta_{i+1} + \cdots + \delta_l & \leq &
    (g-1)(n_1+\cdots+n_{i-1})(n_{i+1}+\cdots+n_l) \\
    & & +(g-1)2n_{i}\frac{n_{i-1}+2n_{i-2}+\cdots+(i-1)n_i}{n_{1}+\cdots+n_i}.
\end{eqnarray*}
So in order to show that the required inequality \eqref{ineqdegVi-1} holds, it is enough to establish that
\begin{equation*}
2\left(\frac{n_{i-1}+2n_{i-2}+\cdots+(i-1)n_1}{n_{1}+\cdots+n_i} \right) \leq n_{1}+ \cdots + n_{i-1}.
\end{equation*}
Equivalently we have to establish that
$$2(n_{i-1}+2n_{i-2}+\cdots+(i-1)n_1)\leq (n_{1}+\cdots+n_{i-1})(n_{1}+\cdots+n_i).$$
But this follows immediately by introducing $m_i=n_i-1\geq 0$ as was done in the first step.
Since this is straightforward, we omit the details. This finishes the proof of the first part.


In order to show part two, we will directly check that a triple $(V, \nabla, V_\bullet^{HN})$ with
$\sP_V = \sP_r^{oper}$ satisfies the two conditions of Definition \ref{defioper}. Given an
integer $i$ with $1 \leq i \leq l-1$ we consider the $\cO_X$-linear map $\nabla_i$ induced by the
connection $\nabla$
$$ \nabla_i : V_i \longrightarrow (V/V_i) \otimes \Omega^1_X.$$
We consider the composite $\overline{\nabla}_i$ of $\nabla_i$ with the canonical projection $(V/V_i) \otimes \Omega^1_X \rightarrow
(V/V_{i-1}) \otimes \Omega^1_X$. On one hand
$$ \mu_{max}((V/V_{i-1}) \otimes \Omega^1_X) = \mu((V_{i-2}/V_{i-1}) \otimes \Omega_X^1) = \mu(V_{i-2}/V_{i-1}) + 2g - 2, $$
and on the other hand
$$ \mu_{min}(V_i) = \mu(V_i/V_{i+1}) = \mu(V_{i-2}/V_{i-1}) + 4g-4 > \mu_{max}((V/V_{i-1}) \otimes \Omega^1_X).$$
The last inequality implies that $\overline{\nabla}_i = 0$, hence $\nabla(V_i) \subset V_{i-1} \otimes \Omega^1_X$
for $1 \leq i \leq l-1$.


Since the local system $(V, \nabla)$ is semistable, the map $\nabla_i$ is nonzero. Moreover,
since $\nabla(V_{i+1}) \subset V_{i} \otimes \Omega^1_X$, the map $\nabla_i$ factorizes
as follows
$$ \tilde{\nabla}_i :  V_i/ V_{i+1} \longrightarrow (V_{i-1}/ V_i) \otimes \Omega^1_X.$$
Since both sides are line bundles of the same degree, we conclude that $\tilde{\nabla}_i$
is an isomorphism.
\end{proof}


\subsection{Correspondence between dormant opers and Quot-schemes} \label{sectionFroboperQuot}

In this section we will determine all semistable local systems $(V, \nabla)$ with
$\sP_V = \sP_r^{oper}$ and $\psi(V,\nabla) = 0$.

\begin{thm} \label{opersquot}
Let $r \geq 2$ be an integer and assume $p > \crg$. Then we have
\begin{enumerate}
\item Given a line bundle $Q$ of degree $\deg(Q) = -(r-1)(g-1)$, the Quot-scheme $\mathrm{Quot}^{r,0}(F_*(Q))$ is non-empty and any vector bundle
$W \in \mathrm{Quot}^{r,0}(F_*(Q))$
gives under pull-back by the Frobenius morphism a semistable local system
$$ (F^*(W), \nabla^{can}) \qquad \text{with} \qquad \sP_{F^*W} = \sP_r^{oper},$$
i.e., the triple $(F^*(W), \nabla^{can}, (F^*(W))^{HN}_\bullet)$ is a dormant oper.
\item Conversely, any dormant oper of degree $0$ is of the form
$(F^*(W), \nabla^{can}, (F^*(W))^{HN}_\bullet)$ with $W \in \mathrm{Quot}^{r,0}(F_*(Q))$ for
some line bundle $Q$ of degree $\deg(Q) = -(r-1)(g-1)$.
\end{enumerate}
\end{thm}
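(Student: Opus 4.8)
The plan is to prove both directions by systematically exploiting the structure theory of opers developed in the preceding sections, combined with the dimension/degree constraints forced by the oper-polygon.

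\medskip

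For part (1), I would start from a line bundle $Q$ of degree $-(r-1)(g-1)$. First I would invoke Proposition \ref{nonemptyQuot} to guarantee non-emptiness of $\mathrm{Quot}^{r,0}(F_*(Q))$: indeed $\deg(Q) = -(r-1)(g-1) = -(r-q)(g-1)$ with $q = \rk(Q) = 1$, so the hypothesis of that proposition holds with equality. Now take any $W \in \mathrm{Quot}^{r,0}(F_*(Q))$, i.e. a rank-$r$ subsheaf $W \hookrightarrow F_*(Q)$ of degree $0$. Since $Q$ is a line bundle it is trivially semistable, and $\mu(Q)/p = -(r-1)(g-1)/p \leq 0 = \mu(W)$ (using $p > C(r,g)$ to control signs), so I can apply Theorem \ref{QuotFrob}(2): $W$ is semistable and Frobenius-destabilized. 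Hence $F^*(W)$ is a local system with $\psi(F^*(W), \nabla^{can}) = 0$ by Cartier's theorem (Theorem \ref{Cartierthm}), and since $W$ is semistable the local system $(F^*(W), \nabla^{can})$ is semistable by the remark following the semistability definition. By Theorem \ref{thmoperdominant}(1) we have $\sP_r^{oper} \succcurlyeq \sP_{F^*W}$. To force equality I would compute $\deg(F^*W) = p \cdot \deg(W) = 0$, so both polygons share endpoints $(0,0)$ and $(r,0)$; the hard part is to argue that the polygon is actually maximal rather than merely dominated. For this I would show that $F^*(W)$ is genuinely destabilized with the maximal possible instability, using that $W \hookrightarrow F_*(Q)$ with $Q$ a line bundle forces the canonical oper structure on $F^*(F_*(Q))$ to induce, via Lemma \ref{lemoperss}, an oper flag on $F^*(W)$ whose successive quotients have the extreme ranks $r_i = 1$; then equation \eqref{quotientoper} pins down the degrees to match $\sP_r^{oper}$ exactly. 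Once $\sP_{F^*W} = \sP_r^{oper}$, Theorem \ref{thmoperdominant}(2) yields that $(F^*(W), \nabla^{can}, (F^*W)^{HN}_\bullet)$ is an oper, and it is dormant since its $p$-curvature vanishes.

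\medskip

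For part (2), I would run the construction in reverse. Let $(F^*(E), \nabla^{can}, (F^*E)^{HN}_\bullet)$ be a dormant oper of degree $0$. By definition it is an oper of type $1$, so by the computation preceding Theorem \ref{thmoperdominant} its Harder-Narasimhan polygon equals $\sP_r^{oper}$; in particular $E$ is stable (or at least the relevant local system is semistable by Proposition \ref{stabilityoper}) and Frobenius-destabilized. Applying Theorem \ref{QuotFrobdegree0} produces a stable bundle $Q'$ of rank $\rk(Q') < r$ and degree $-1$ together with an injection $E \hookrightarrow F_*(Q')$. However, I need $Q$ to be a \emph{line bundle} of degree $-(r-1)(g-1)$, not merely a bundle of degree $-1$. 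The key observation is that the oper flag forces the type to be $1$, which by the degree formula \eqref{degoper} with $\rk(Q) = 1$ and $l = r$ gives $\deg(V) = r(\deg(Q) + (r-1)(g-1)) = 0$, i.e. $\deg(Q) = -(r-1)(g-1)$. Thus the first graded quotient $Q = V_0/V_1$ of the oper flag is exactly a line bundle of the required degree, and I would take $W = E$, so that the adjunction $F^*(E) \to Q$ dualizes to realize $E = W \hookrightarrow F_*(Q)$ as an element of $\mathrm{Quot}^{r,0}(F_*(Q))$.

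\medskip

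The main obstacle I anticipate is the bookkeeping that identifies the line bundle $Q$ appearing in the \emph{oper flag} of $F^*(E)$ with a line bundle $Q$ on $X$ such that $E \hookrightarrow F_*(Q)$ via adjunction, and in particular verifying that the subsheaf has the correct degree $0$ and rank $r$ so that it genuinely lands in $\mathrm{Quot}^{r,0}(F_*(Q))$. Concretely, I must check that the canonical filtration of Theorem \ref{exampleoper} on $F^*(F_*(Q))$ restricts to the Harder-Narasimhan filtration of $F^*(E)$ under the inclusion $F^*(E) \hookrightarrow F^*(F_*(Q))$, so that the two notions of oper flag coincide; this is where Remark \ref{flagHN} is essential, since $Q$ being a line bundle makes it semistable and forces the oper flag to be the Harder-Narasimhan filtration. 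Everything else is a matter of assembling Theorems \ref{QuotFrob}, \ref{QuotFrobdegree0}, \ref{thmoperdominant} and the degree formula \eqref{degoper}, together with the non-emptiness from Proposition \ref{nonemptyQuot}.
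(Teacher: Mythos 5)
Your proposal is correct and follows essentially the same route as the paper: non-emptiness via Proposition \ref{nonemptyQuot}, semistability of $W$ via Theorem \ref{QuotFrob}(2), and the forcing of $\sP_{F^*W}=\sP_r^{oper}$ by inducing the oper flag of $F^*(F_*(Q))$ on $F^*(W)$ through Lemma \ref{lemoperss} (where $q=1$ forces $r_0=\cdots=r_m=1$ and the degree count $0\le 0$ forces equality); for the converse, taking $Q=V_0/V_1$ with $\deg(Q)=-(r-1)(g-1)$ from \eqref{degoper} and using adjunction plus Remark \ref{flagHN} is exactly the paper's argument. The only point you leave slightly implicit is the injectivity of the adjoint map $W\to F_*(Q)$ in part (2), which the paper settles by the same slope argument (Proposition \ref{slopesubbundlesFQ}) used in the proof of Theorem \ref{QuotFrob}(1) -- and which you correctly signal as the remaining bookkeeping.
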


\begin{proof}
First we note that the non-emptiness of
$\mathrm{Quot}^{r,0}(F_*(Q))$ has been shown in Proposition
\ref{nonemptyQuot}. Secondly, by Theorem \ref{QuotFrob} (2) any
vector bundle $W \in \mathrm{Quot}^{r,0}(F_*(Q))$ is semistable,
hence $(F^* W, \nabla^{can})$ is a semistable local system. Thus it
remains to check that $\sP_{F^*W} = \sP_r^{oper}$. As in the proof
of Proposition \ref{slopesubbundlesFQ} we induce the oper flag
$V_\bullet$ of $V = F^*(F_*(Q))$ on $W_0 = F^*(W)$. Using the same
notation as in the proof of Proposition \ref{slopesubbundlesFQ}, we
immediately deduce from the inequalities \eqref{constraint1} that
$m= r-1$ and $r_0 = \cdots = r_m = 1$. Moreover, since $W_i/
W_{i+1}$ is a subsheaf of $V_i/V_{i+1}$, we have the inequalities
for $0 \leq i \leq m$
$$ \deg(W_i/ W_{i+1}) \leq \deg( V_i/V_{i+1}) = \deg(Q) + i(2g-2).$$
Summing over $i$ we obtain
$$ 0 = \deg(W_0) = \sum_{i=0}^m \deg(W_i/ W_{i+1}) \leq \sum_{i=0}^m \deg( V_i/V_{i+1}) = (m+1) \deg(Q) + \frac{m(m+1)}{2}(2g-2) = 0.$$
Hence we deduce that the previous inequalities are equalities
$\deg(W_i/ W_{i+1}) = \deg(Q) + i(2g-2)$, so that the induced flag $W_\bullet$ on $F^*(W)$ is in fact the oper flag. Moreover, since the quotients $W_i/W_{i+1}$ are line bundles and the sequence of slopes
$\mu(W_i/W_{i+1})$ is strictly increasing, the flag $W_\bullet$ coincides with the Harder-Narasimhan filtration of
$F^*(W)$. This proves part one.


In order to show part two, we observe that any dormant oper is of the form $(F^*(W), \nabla^{can}, V_\bullet)$, where by Remark \ref{flagHN} the
oper flag $V_\bullet$ is necessarily the Harder-Narasimhan filtration $(F^*(W))^{HN}_\bullet$. If we denote by $Q$ the
line bundle quotient $V_0/ V_1$, then $\deg(Q) = -(r-1)(g-1)$, and by adjunction we obtain a non-zero map
$W \rightarrow F_*(Q)$. We then conclude, as in the proof of Proposition \ref{QuotFrob}, that $W \hookrightarrow
F_*(Q)$ is an injection.
\end{proof}


The previous result leads to a description of the set of all dormant opers of
degree $0$ as a relative Quot-scheme. We need to introduce some notation. We consider a
universal line bundle $\cU$ over $X \times \Pic^{-(r-1)(g-1)}(X)$ and
denote by
$$ \alpha: \QQuot(r,0) := \mathrm{Quot}^{r,0}((F \times \mathrm{id}_{\Pic})_* \cU)
\lra \Pic^{-(r-1)(g-1)}(X) $$
the relative Quot-scheme over the Picard variety $\Pic^{-(r-1)(g-1)}(X)$.
The fibre $\alpha^{-1}(Q)$ over a line bundle $Q \in \Pic^{-(r-1)(g-1)}(X)$ equals the
Quot-scheme $\mathrm{Quot}^{r,0}(F_*(Q))$.


The group $\Pic^0(X)$ parameterizing degree $0$ line bundles over
$X$ acts on the relative Quot-scheme $\QQuot(r,0)$ via tensor
product. Note that by the projection formula
$$\text{if} \  W \in \mathrm{Quot}^{r,0}(F_*(Q)), \qquad \text{then} \
W \otimes L  \in \mathrm{Quot}^{r,0}(F_*(Q \otimes L^{\otimes p})).$$
Using this action one observes that the fibres $\alpha^{-1}(Q)$ are all isomorphic
for $Q \in \Pic^{-(r-1)(g-1)}(X)$. On the other hand the determinant map
$$ \det : \QQuot(r,0) \lra \Pic^0(X), \qquad W \mapsto \det W $$
gives another fibration of $\QQuot(r,0)$ over $\Pic^0(X)$ and,
using again the action of $\Pic^0(X)$, one notices that the fibres of $\det$
are all isomorphic. We denote by
$$\QQuot(r,\cO_X) := \mathrm{det}^{-1}(\cO_X)$$
the fibre over $\cO_X$.


Then Theorem \ref{opersquot} implies the following

\begin{prop} \label{bijFrobopersQuot}
If $p > \crg$, there is a one-to one correspondence between the
set of dormant opers of rank $r$ and degree $0$ (resp. with fixed
trivial determinant)
and the relative Quot-scheme $\QQuot(r,0)$ (resp. $\QQuot(r,\cO_X)$).
\end{prop}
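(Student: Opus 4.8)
The plan is to promote the fibrewise statement of Theorem \ref{opersquot} into a global bijection by exhibiting two mutually inverse maps between the set of isomorphism classes of dormant opers of rank $r$ and degree $0$ (necessarily of type $1$ and length $r$, since $\rk(Q)=1$ forces $l=r$) and the set of points of the relative Quot-scheme $\QQuot(r,0)$.

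First I would define the forward map. A point of $\QQuot(r,0)$ lying over $Q \in \Pic^{-(r-1)(g-1)}(X)$ is exactly a subsheaf $[W \hookrightarrow F_*(Q)]$ with $\rk(W)=r$ and $\deg(W)=0$, i.e. an element of $\mathrm{Quot}^{r,0}(F_*(Q))$. By Theorem \ref{opersquot}(1) the assignment
$$ [W \hookrightarrow F_*(Q)] \longmapsto (F^*(W), \nabla^{can}, (F^*W)^{HN}_\bullet) $$
produces a dormant oper of rank $r$ and degree $0$, so this is well defined.

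Next I would construct the inverse. Given a dormant oper $(V, \nabla, V_\mydot)$ of degree $0$, Cartier's Theorem \ref{Cartierthm} yields a canonical bundle $W = V^\nabla$ together with a canonical isomorphism $(V, \nabla) \cong (F^*(W), \nabla^{can})$. Setting $Q = V_0/V_1$, relation \eqref{degoper} (with $l=r$ and $\rk Q = 1$) forces $\deg Q = -(r-1)(g-1)$, and Remark \ref{flagHN} identifies the oper flag with the Harder--Narasimhan filtration of $V$. Adjunction applied to the surjection $F^*(W) = V \twoheadrightarrow V/V_1 = Q$ furnishes a map $W \to F_*(Q)$ which, by Theorem \ref{opersquot}(2), is injective and realizes $W$ as a point of $\mathrm{Quot}^{r,0}(F_*(Q))$, hence of $\QQuot(r,0)$. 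I would then check the two maps are mutually inverse: forward-then-back returns the same $W$ because $V^\nabla = (F^*W)^\nabla = W$ by Cartier, and it recovers the original subsheaf since the computation in the proof of Theorem \ref{opersquot}(1) shows $W_0/W_1 = Q$ and that the adjunction of $F^*W \twoheadrightarrow W_0/W_1$ reproduces the inclusion $W \hookrightarrow F_*(Q)$; back-then-forward returns the original oper because $V = F^*(V^\nabla)$ and the oper flag is canonically the HN filtration. For the determinant-fixed statement I would note that under Cartier $\det V = F^*(\det W)$, so $(\det V, \det\nabla) \cong (\cO_X, d)$ if and only if $\det W \cong \cO_X$, i.e. $W \in \det^{-1}(\cO_X) = \QQuot(r,\cO_X)$, which is precisely the sub-locus matched by the two maps.

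The substance of the argument is really already contained in Theorem \ref{opersquot}, whose part (2) supplies the one genuinely nontrivial input: that the adjunction map recovered from an \emph{arbitrary} dormant oper is injective with the correct degree data. The main obstacle I would need to handle carefully is therefore not an estimate but a bookkeeping point, namely that $Q$ and the specific subsheaf $W \subset F_*(Q)$ (not merely the abstract bundle $W$) are canonically determined by the isomorphism class of the oper, so that the correspondence is well defined on isomorphism classes on one side and on honest points of the Quot-scheme on the other; this follows from the canonicity in Cartier's theorem and Remark \ref{flagHN}, and is routine once those are invoked.
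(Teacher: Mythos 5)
Your proposal is correct and follows exactly the route the paper intends: the paper derives Proposition \ref{bijFrobopersQuot} directly from Theorem \ref{opersquot} with no further argument, and your write-up simply makes explicit the two mutually inverse maps (Frobenius pull-back with its Harder--Narasimhan flag in one direction, Cartier descent plus adjunction in the other) together with the determinant bookkeeping. The only cosmetic point is that a degree-$0$ rank-$r$ dormant oper is of type $1$ by the standing convention of that section (opers there mean type-$1$ opers), not as a consequence of the rank and degree data.
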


\begin{rem}
By Proposition \ref{stabilityoper} any oper (of type $1$) is stable, which implies
that, if $p > \crg$, any vector bundle $W \in \mathrm{Quot}^{r,0}(F_*(Q))$ is stable.
\end{rem}





\section{Finiteness of the scheme of nilpotent $\PGL(r)$-opers}


The purpose of this section is to show that there are only a finite number
of dormant opers with trivial determinant (Corollary \ref{finiteFroboper}).
This finiteness result
will imply that certain Quot-schemes have the expected dimension (Theorem \ref{Quotexpdim}).


\subsection{Nilpotent opers}

\begin{defi}
We say that an oper $(V, \nabla, V_\mydot)$ is nilpotent if its $p$-curvature $\psi(V,\nabla)$
is nilpotent.
\end{defi}

We remark that by Proposition \ref{proppcurv} (ii) the property of being nilpotent is
{\em not} invariant under tensor product by rank-$1$ local systems. For $\PGL(r)$-opers
we therefore take the following

\begin{defi}
We say that a $\omega \in \Op_{\PGL(r)}$ is nilpotent  (resp. dormant)
 if some (hence any) lift
$\sigma_\theta (\omega) \in
\Op_{\SL(r)}$ is nilpotent (resp. dormant).
\end{defi}

By Proposition  \ref{HMI} we obtain that $\omega$ is nilpotent if and only if
$\HM(\omega) = 0$. We
will denote by $\Nilp_r(X) := \HM^{-1}(0) \subset \Op_{\PGL(r)}$
the fiber over $0$ of the Hitchin-Mochizuki map. It parameterizes
nilpotent $\PGL(r)$-opers and contains in particular dormant
$\PGL(r)$-opers.

\begin{thm} \label{nilpfinite}
The scheme $\Nilp_r(X)$ is finite.
\end{thm}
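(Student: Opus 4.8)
The plan is to prove that $\Nilp_r(X)$ is finite by showing that it is both affine and proper, as announced in the introduction. Since $\Nilp_r(X) = \HM^{-1}(0)$ is the fiber over $0$ of the Hitchin-Mochizuki morphism, and by Proposition \ref{operaffine} the source $\Op_{\PGL(r)}(X)$ is isomorphic to the affine space $\AAA^{(g-1)(r^2-1)}_k$, the scheme $\Nilp_r(X)$ is a closed subscheme of an affine space, hence affine. A scheme over $k$ that is simultaneously affine and proper is finite, so the entire proof reduces to establishing properness of $\Nilp_r(X)$, and this in turn follows once we show the Hitchin-Mochizuki morphism $\HM$ itself is proper.

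First I would verify properness of $\HM$ via the valuative criterion. Let $R$ be a discrete valuation ring with fraction field $K$ and residue field $\kappa$, and suppose we are given a $K$-point of $\Op_{\PGL(r)}(X)$ whose image under $\HM$ extends to an $R$-point of the target affine space $W_r$. We must produce a (unique) extension of the oper to an $R$-point of $\Op_{\PGL(r)}(X)$. Lifting via $\sigma_\theta$, we work with a family of nilpotent $\SL(r)$-opers over the generic point and seek to extend it over the closed point. The essential inputs here are Proposition \ref{stabilityoper}, which guarantees that the underlying local system of an oper is stable (so that the family of underlying bundles behaves well under specialization), together with a semistable reduction statement for nilpotent opers (the Proposition \ref{ssreduction} referred to in the introduction).

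The main obstacle, and the technical heart of the argument, is precisely this semistable reduction: given the generic oper family, one must show that after possibly a finite base change the oper extends with the limiting object again being a nilpotent oper rather than degenerating. Here is where Theorem \ref{thmoperdominant} plays the decisive role. Since the generic fibers are nilpotent opers, their underlying local systems are semistable (indeed stable) of degree $0$ with Harder-Narasimhan polygon equal to the maximal polygon $\sP_r^{oper}$. By the Shatz specialization result Theorem \ref{thmshatz}(2), the polygon of the special fiber lies on or above that of the generic fiber, i.e. $\sP_{V_s} \succcurlyeq \sP_{V_\eta} = \sP_r^{oper}$; but Theorem \ref{thmoperdominant}(1) asserts that $\sP_r^{oper}$ is maximal among polygons of semistable local systems, forcing equality $\sP_{V_s} = \sP_r^{oper}$. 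Then Theorem \ref{thmoperdominant}(2) identifies the limiting triple $(V_s, \nabla_s, (V_s)^{HN}_\bullet)$ as again being an oper, and the nilpotency of the $p$-curvature is a closed condition preserved in the limit (the characteristic of the $p$-curvature vanishes on the generic fiber and hence on the special fiber by continuity). This closes the valuative criterion.

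To complete the argument I would assemble these pieces: properness of $\HM$ gives that its fiber $\Nilp_r(X) = \HM^{-1}(0)$ over the closed point $0 \in W_r$ is proper over $k$; combined with affineness inherited from the affine structure on $\Op_{\PGL(r)}(X)$, we conclude that $\Nilp_r(X)$ is finite over $k$. I expect the delicate point to be ensuring that the limiting connection genuinely extends as a connection (not merely the bundle), so that the specialization is an oper in the precise sense of Definition \ref{defioper}; controlling the poles of $\nabla$ along the central fiber is where the stability provided by Proposition \ref{stabilityoper} and the rigidity of the maximal polygon must be used carefully, and this is the step I would treat with the most attention.
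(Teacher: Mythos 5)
Your proposal is correct and follows essentially the same route as the paper: affineness from Proposition \ref{operaffine}, reduction to a valuative criterion via $\sigma_\theta$, stability of the underlying local system (Proposition \ref{stabilityoper}), Shatz specialization combined with the maximality of $\sP_r^{oper}$ to force $\sP_{V_s}=\sP_r^{oper}$, and Theorem \ref{thmoperdominant}(2) to recognize the special fiber as an oper. The one step you flag as delicate --- extending the connection with a semistable, nilpotent limit --- is exactly what the paper settles by invoking the semistable reduction result of Laszlo--Pauly (their Proposition 5.2), after which the oper flag extends by properness of the Quot-scheme and restricts to subbundles because the degrees and the polygon are preserved.
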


\begin{proof}
By Lemma \ref{operaffine} the scheme $\Nilp_r(X)$ is a closed subvariety of an affine
space, hence it is affine. It suffices therefore to show that it is
proper over $\Spec(k)$. Using composition with the section
$\sigma_\theta : \Op_{\PGL(r)} \ra \Op_{\SL(r)}$ it will be enough to show that the
fiber $\mathcal{N}\mathrm{ilp}_r(X)$  over $0$
of the morphism $\Op_{\SL(r)} \ra {\bf W}_r$ (see section 3.3) is universally closed
over $\Spec(k)$. Using the projection map $pr: \Op_{\SL(r)} \rightarrow \Op_{\PGL(r)}$
this implies that $\mathrm{Nilp}_r(X)$ is also universally closed.
The assertion on $\mathcal{N}\mathrm{ilp}_r(X)$ will be a consequence of the following
valuative criterion.
\begin{prop} \label{ssreduction}
Let $R$ be a discrete valuation ring and let $s$ and $\eta$ be the closed and
generic point of $\Spec(R)$. For any nilpotent $\SL(r)$-oper
$(V_\eta, \nabla_\eta, (\nabla_\eta)_\mydot)$ over $X \times \Spec(K)$
there exists a nilpotent $\SL(r)$-oper $(V_R, \nabla_R, (\nabla_R)_\mydot)$ over
$X \times \Spec(R)$ extending $(V_\eta, \nabla_\eta, (\nabla_\eta)_\mydot)$.
\end{prop}

\begin{proof}
First of all we observe that the local system $(V_\eta, \nabla_\eta)$ is stable
by Proposition \ref{stabilityoper}. Hence we can apply \cite{laszlo-pauly}
Proposition 5.2 which asserts the existence of a local system $(V_R, \nabla_R)$
over $X \times \Spec(R)$ extending $(V_\eta, \nabla_\eta)$ and such that $(V_s, \nabla_s)$ is semistable and
$\psi(V_s, \nabla_s)$ is nilpotent, where $(V_s, \nabla_s)$ denotes the
restriction of $(V_R, \nabla_R)$ to the special fiber.


By Theorem \ref{thmshatz} (2) the Harder-Narasimhan polygon raises under
specialization, i.e. $\sP_{V_s} \succcurlyeq \sP_{V_\eta} = \sP_r^{oper}$. On the
other hand, by Theorem~\ref{thmoperdominant} (1) we have $\sP_r^{oper}
\succcurlyeq \sP_{V_s}$ since the local system $(V_s, \nabla_s)$ is
semistable. Hence we obtain equality $\sP_{V_s} = \sP_{V_\eta} = \sP_r^{oper}$.


It remains to check that the extension $(V_R)_\mydot$ to $X \times \Spec(R)$
of the oper flag $(V_\eta)_\mydot$ has the property that the restriction to the
special fiber $(V_s, \nabla_s, (V_s)_\mydot)$ is an oper. We note that by
properness of the Quot-scheme the extension $(V_R)_\mydot$ exists and is unique.
Restricting $(V_R)_\mydot$ to the special fiber gives a filtration by
{\em subsheaves}
\begin{equation} \label{filtVs}
0 =  (V_s)_r  \subset (V_s)_{r-1} \subset \cdots \subset (V_s)_1 \subset (V_s)_0 = V_s
\end{equation}
Since the degrees are constant under specialization $\deg (V_s)_i = \deg (V_\eta)_i$
for $0 \leq i \leq r-1$ and since $\sP_{V_s} = \sP_{V_\eta}$, we deduce that the
subsheaves of the filtration \eqref{filtVs} are subbundles, i.e. the quotients
$(V_s)_i/(V_s)_{i+1}$ are torsion free. Hence the filtration \eqref{filtVs}
coincides with the Harder-Narasimhan filtration of $V_s$. Finally, Theorem~\ref{thmoperdominant} (2)
allows to conclude that $(V_s, \nabla_s, (V_s)_\mydot)$ is an oper.
\end{proof}
This completes the proof of the theorem.
\end{proof}

Since dormant opers are nilpotent, we immediately obtain

\begin{cor} \label{finiteFroboper}
There exists only a finite number of dormant opers with fixed trivial determinant.
\end{cor}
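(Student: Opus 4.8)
The plan is to deduce the finiteness of dormant $\SL(r)$-opers—which are precisely the dormant opers with fixed trivial determinant—from the finiteness of $\Nilp_r(X)$ established in Theorem~\ref{nilpfinite}, by controlling the fibres of the projection $\mathrm{pr} : \Op_{\SL(r)}(X) \ra \Op_{\PGL(r)}(X)$. First I would observe that a dormant oper, having $\psi(V,\nabla) = 0$, is in particular nilpotent; hence any dormant $\SL(r)$-oper $(V, \nabla, V_\mydot)$ projects under $\mathrm{pr}$ to a nilpotent $\PGL(r)$-oper, i.e. to a point of $\Nilp_r(X) = \HM^{-1}(0)$, which by Theorem~\ref{nilpfinite} is a finite set.

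The next step is to analyze the fibres of $\mathrm{pr}$. As recalled in the section on recollections of results on opers, the group $H^1(X, \mu_r)$ of $r$-torsion line bundles acts on $\Op_{\SL(r)}(X)$ by tensoring with $r$-torsion local systems, its connected components are in bijection with $H^1(X, \mu_r)$, and the section $\sigma_\theta$ identifies each component with $\Op_{\PGL(r)}(X)$. Since $p$ does not divide $r$, the group $H^1(X, \mu_r) = \Jac(X)[r]$ is finite (of order $r^{2g}$), so each fibre of $\mathrm{pr}$ is a torsor under this finite group and is therefore finite. Consequently the preimage $\mathrm{pr}^{-1}(\Nilp_r(X))$ is finite, being a set that maps to the finite set $\Nilp_r(X)$ with finite fibres. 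As the dormant $\SL(r)$-opers are all contained in this preimage, there are only finitely many of them.

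The one point requiring care is that \emph{dormancy} is compatible with the $H^1(X, \mu_r)$-action, so that the relevant objects genuinely sit inside $\mathrm{pr}^{-1}(\Nilp_r(X))$ and the descent is legitimate. This is already implicit in the proof of Proposition~\ref{HMindtheta}: by Proposition~\ref{proppcurv}~(ii) and Proposition~\ref{pcurvtorsion} one has, for an $r$-torsion local system $(L, \nabla_L)$, the identity $\psi(V \otimes L, \nabla_V \otimes \nabla_L) = \psi(V, \nabla_V)$, which shows that the action preserves both nilpotency and the vanishing of the $p$-curvature.

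I do not expect any substantial obstacle here: the entire difficulty has been absorbed into Theorem~\ref{nilpfinite}, whose proof supplies both affineness (via Proposition~\ref{operaffine}) and properness (via the valuative criterion of Proposition~\ref{ssreduction}). The remaining argument is the purely formal observation that a finite-to-one map cannot make a set lying over a finite target infinite. Thus the corollary follows immediately, as the author indicates.
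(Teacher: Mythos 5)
Your proposal is correct and follows essentially the same route as the paper, which simply notes that dormant opers are nilpotent and invokes Theorem~\ref{nilpfinite}. The only difference is that you spell out the step the paper leaves implicit — that the fibres of $\mathrm{pr} : \Op_{\SL(r)}(X) \ra \Op_{\PGL(r)}(X)$ are finite torsors under $H^1(X,\mu_r)$ and that tensoring by $r$-torsion local systems preserves the $p$-curvature — which is a worthwhile clarification but not a different argument.
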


\begin{rem} \label{HMproper}
The previous proof generalizes straightforwardly to arbitrary fibers (see
\cite{F} Theorem I.3 or \cite{laszlo-pauly} Proposition 5.2), which implies that
the Hitchin-Mochizuki morphism $\mathrm{HM}$ is proper.
\end{rem}

\subsection{Dimension of Quot-schemes}

We refer to section \ref{sectionFroboperQuot} for the correspondence between dormant opers and Quot-schemes.

\begin{thm} \label{Quotexpdim}
Assume $p > \crg$. For any line bundle $Q$ with $\deg(Q) = -(r-1)(g-1)$ the Quot-scheme
$\mathrm{Quot}^{r,0}(F_*(Q))$ is $0$-dimensional.
\end{thm}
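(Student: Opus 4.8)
The plan is to deduce the statement from the finiteness of dormant opers with fixed determinant (Corollary \ref{finiteFroboper}), transported through the two fibration structures carried by the relative Quot-scheme $\QQuot(r,0)$ introduced in Section \ref{sectionFroboperQuot}. The key observation is that $\mathrm{Quot}^{r,0}(F_*(Q))$ is, by construction, the fibre $\alpha^{-1}(Q)$ of the morphism $\alpha : \QQuot(r,0) \to \Pic^{-(r-1)(g-1)}(X)$, whereas finiteness is naturally available along the \emph{other} fibration, namely the determinant map $\det : \QQuot(r,0) \to \Pic^0(X)$. Since both bases are $g$-dimensional, a dimension count will force the fibres of $\alpha$ to be $0$-dimensional.

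First I would record that the fibre $\QQuot(r,\cO_X) = \det^{-1}(\cO_X)$ is finite: by Proposition \ref{bijFrobopersQuot} it is in bijection with the set of dormant opers of rank $r$ with fixed trivial determinant, which is finite by Corollary \ref{finiteFroboper}. Next I would bound $\dim \QQuot(r,0)$ from above. Using the $\Pic^0(X)$-action $W \mapsto W \otimes L$ and the relation $\det(W \otimes L) = \det(W) \otimes L^{\otimes r}$, together with the surjectivity of multiplication by $r$ on the abelian variety $\Pic^0(X)$, every $W' \in \QQuot(r,0)$ can be written as $W \otimes L$ with $\det(W) = \cO_X$. Hence the action map
$$ \Pic^0(X) \times \QQuot(r,\cO_X) \lra \QQuot(r,0), \qquad (L, W) \mapsto W \otimes L $$
is surjective, and as its source has dimension $g + 0 = g$ we obtain $\dim \QQuot(r,0) \leq g$.

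Finally I would exploit the fibration $\alpha$. By Proposition \ref{nonemptyQuot} every fibre $\alpha^{-1}(Q) = \mathrm{Quot}^{r,0}(F_*(Q))$ is non-empty, and by the projection-formula argument of Section \ref{sectionFroboperQuot} (tensoring by $L$ identifies $\alpha^{-1}(Q)$ with $\alpha^{-1}(Q \otimes L^{\otimes p})$, and $L \mapsto L^{\otimes p}$ is surjective on $\Pic^0(X)$) all these fibres are isomorphic; in particular $\alpha$ is surjective onto the $g$-dimensional variety $\Pic^{-(r-1)(g-1)}(X)$. Since the generic fibre dimension of a surjective morphism is at most the difference of dimensions, and since here every fibre, in particular the generic one, is isomorphic to $\mathrm{Quot}^{r,0}(F_*(Q))$, we get
$$ \dim \mathrm{Quot}^{r,0}(F_*(Q)) \leq \dim \QQuot(r,0) - \dim \Pic^{-(r-1)(g-1)}(X) \leq g - g = 0. $$
As the scheme is non-empty, it is exactly $0$-dimensional.

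The one point that requires care, rather than any deep input, is the last dimension inequality when $\QQuot(r,0)$ fails to be irreducible: it must be applied component by component, using that the fibre-dimension bound $\dim C - \dim B$ holds on each irreducible component $C$ of $\QQuot(r,0)$ dominating the base $B = \Pic^{-(r-1)(g-1)}(X)$, so that the maximal (hence generic) fibre dimension is still bounded by $\dim \QQuot(r,0) - g$. I expect this bookkeeping to be the only real obstacle, since all the geometric content is already packaged in Corollary \ref{finiteFroboper} via the correspondence of Proposition \ref{bijFrobopersQuot}.
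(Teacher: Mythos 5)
Your argument is correct and is essentially the paper's own proof: both deduce $\dim \QQuot(r,\cO_X)=0$ from Proposition \ref{bijFrobopersQuot} and Corollary \ref{finiteFroboper}, conclude $\dim \QQuot(r,0)\leq g$ via the $\Pic^0(X)$-action along the determinant fibration, and then use that all fibres of $\alpha$ are isomorphic over the $g$-dimensional base $\Pic^{-(r-1)(g-1)}(X)$ to force $\dim \mathrm{Quot}^{r,0}(F_*(Q))=0$. Your extra care about non-irreducibility and about surjectivity of multiplication by $r$ and by $p$ on $\Pic^0(X)$ only makes explicit what the paper leaves implicit.
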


\begin{proof}
We deduce from Proposition \ref{bijFrobopersQuot} and Corollary \ref{finiteFroboper} that
$\dim \QQuot(r,\cO_X) = 0$. Hence $\dim \QQuot(r,0) = g$ and, since
the isomorphism class of the fiber $\alpha^{-1}(Q)$ does not depend
on $Q$, we obtain $\dim \mathrm{Quot}^{r,0}(F_*(Q)) = 0$.
\end{proof}

\begin{rem}
Let us remark that
in order to show that $\dim \mathrm{Quot}^{r,0}(F_*(Q)) = 0$, it is sufficient to
show that the natural morphism  $\mathrm{Quot}^{r,0}(F_*(Q)) \ra  \Op_{\PGL(r)}$ is injective
and use properness of the Quot-scheme and the fact that $\Op_{\PGL(r)}$ is affine to conclude that $\dim \mathrm{Quot}^{r,0}(F_*(Q)) = 0$. 
\end{rem}


\section{Applications to loci of Frobenius-destabilized rank-$2$ vector
bundles}\label{dimensions-rank-two}

\subsection{Dimension of any irreducible component}
In this section we will deal with rank-$2$ vector bundles and use the notation introduced in
section \ref{loci}. Note that in this case
$C(2,g) = 0$ and that $\cJ^s(2) = \cJ(2)$ since there are
no strictly semistable rank-$2$ Frobenius-destabilized vector bundles.

\begin{rem}
It is shown in \cite{Mo2} that $\dim \cJ(2) = 3g-4$ for a general
curve $X$ under the assumption $p> 2g-2$.
\end{rem}

As an application of our results on opers we obtain the following information
on the locus of Frobenius-destabilized bundles $\cJ(2)$.

\begin{thm} \label{dimJ2}
Any irreducible component of $\cJ(2)$ containing a dormant oper
has dimension $3g-4$.
\end{thm}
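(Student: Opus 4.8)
The plan is to identify $\cJ(2)$ with the image of a relative Quot-scheme and to compute its dimension by squeezing between the Grothendieck lower bound of Proposition~\ref{dimestimateQuot} and an upper bound furnished by the expected-dimensionality of the fibre Quot-schemes. Since $C(2,g)=r(r-1)(r-2)(g-1)=0$ for $r=2$, every result of the paper is available for all $p>0$, and as noted $\cJ^s(2)=\cJ(2)$. Fix a component $Z$ of $\cJ(2)$ and a dormant oper $E_0\in Z$; let $N_0$ be the minimal-slope quotient of $F^*(E_0)$, so that $\deg N_0=-(g-1)$ because $\sP_{F^*E_0}=\sP_2^{oper}$ is maximal by Theorem~\ref{thmoperdominant}. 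By Theorems~\ref{QuotFrobdegree0} and~\ref{locusJ} the forgetful morphism $\pi:\QQuot(1,2,0)\ra\cM(2)$, $[E\hra F_*(Q)]\mapsto E$, from the relative Quot-scheme $\alpha:\QQuot(1,2,0)\ra\Pic^{-1}(X)$ with fibres $\mathrm{Quot}^{2,0}(F_*(Q))$, is surjective onto $\cJ(2)$; this is the object I will analyse.

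For the lower bound I would first note that, by Proposition~\ref{dimestimateQuot} applied relatively over $\Pic^{-1}(X)$, every irreducible component of $\QQuot(1,2,0)$ has dimension at least $g+2\big[(g-1)+\deg Q\big]=3g-4$, the summand $g$ being $\dim\Pic^{-1}(X)$. Next I locate $E_0$ inside $\QQuot(1,2,0)$: by adjunction a nonzero map $F^*(E_0)\ra Q$ with $\deg Q=-1$ must annihilate the maximal subbundle of $F^*(E_0)$ and hence factor through $N_0\hra Q$, so the lifts of $E_0$ are exactly the pairs $\big(Q=N_0(D),\,E_0\hra F_*(Q)\big)$ with $D$ effective of degree $g-2$, a subvariety isomorphic to $\Sym^{g-2}(X)$. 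Let $\widetilde Z$ be a component of $\QQuot(1,2,0)$ through such a lift; then $\dim\widetilde Z\geq 3g-4$. The key point is that $\pi|_{\widetilde Z}$ is generically finite: away from the proper closed locus where the adjoint $F^*(E)\ra Q$ fails to be surjective, $Q$ is the canonical minimal-slope quotient of $F^*(E)$ and the inclusion $E\hra F_*(Q)$ is reconstructed from $E$, so the generic $\pi$-fibre is a single reduced point. Since $E_0\in Z$, the irreducible image $\overline{\pi(\widetilde Z)}$ is a $(3g-4)$-dimensional subvariety of $\cJ(2)$ passing through $E_0$ and contained in $Z$; in particular $E_0$ lies in the closure of the open stratum $\cJ(2)_1$ and $\dim Z\geq 3g-4$.

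For the upper bound it is enough to prove $\dim\QQuot(1,2,0)=3g-4$, for then $\dim Z\leq\dim\cJ(2)\leq\dim\QQuot(1,2,0)=3g-4$. Given the lower bound of the previous paragraph, this reduces to the \emph{expected-dimensionality} of the fibres, $\dim\mathrm{Quot}^{2,0}(F_*(Q))\leq 2g-4$ for $\deg Q=-1$. I would establish this by showing the obstruction space $\Ext^1\big(E,F_*(Q)/E\big)\cong\Hom\big(F_*(Q)/E,\,E\otimes\Omega_X^1\big)^*$ vanishes at every $[E\hra F_*(Q)]$: pulling a putative homomorphism back by $F$ turns it into a horizontal map out of $V/F^*(E)$, where $V=F^*(F_*(Q))$ is the dormant oper of Theorem~\ref{exampleoper}; the induced-flag inequalities of Lemma~\ref{lemoperss}, together with the fact that the oper flag raises slopes by exactly $2g-2$ at each step, leave no room for a nonzero such map. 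This obstruction-vanishing is the step I expect to be the main obstacle, since Proposition~\ref{dimestimateQuot} supplies only the lower bound; the borderline case $\deg Q=-(g-1)$ is precisely the finiteness of dormant opers (Theorem~\ref{Quotexpdim}, via Proposition~\ref{bijFrobopersQuot}), which serves as a guide. Granting it, $\QQuot(1,2,0)$ is pure of dimension $3g-4$; combined with the preceding paragraph this yields $\dim Z=3g-4$, completing the proof and extending Mochizuki's dimension count to every component of $\cJ(2)$ meeting the dormant locus.
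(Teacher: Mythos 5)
Your setup coincides with the paper's (the morphism $\pi:\QQuot(1,2,0)\to\cJ(2)$, the Grothendieck lower bound $3g-4$, the identification of the lifts of a dormant oper with a copy of $\Sym^{g-2}(X)$), but both steps that carry the actual content have gaps. The serious one is the upper bound. You propose to prove $\dim\mathrm{Quot}^{2,0}(F_*(Q))\le 2g-4$ for \emph{every} $Q$ of degree $-1$ by showing that $\Ext^1(E,F_*(Q)/E)\cong\Hom(F_*(Q)/E,E\otimes\Omega^1_X)^*$ vanishes at \emph{every} point. Slope considerations do not give this: $F_*(Q)/E$ is a quotient of the stable bundle $F_*(Q)$, of slope roughly $g-1$, while $E\otimes\Omega^1_X$ is semistable of slope $2g-2$, so nonzero maps are not excluded; and the horizontal map you obtain after applying $F^*$ simply descends back, by Cartier's theorem, to the map you started with, so the argument is circular. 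Moreover the vanishing you need is much stronger than the theorem: at the borderline $\deg Q=-(g-1)$ it would say that the finite scheme of dormant opers is everywhere \emph{reduced} (there $\chi=0$, so $\Ext^1=0$ iff the Zariski tangent space vanishes), a delicate ordinariness-type statement the paper never claims; and at $\deg Q=-1$ it would bound every component of $\cJ(2)$, not just those containing a dormant oper. The paper avoids all of this: Lemma \ref{dimQuotgen} proves that components of $\mathrm{Quot}^{2,0}(F_*(Q))$ \emph{containing a dormant oper} have dimension exactly $2d$ when $\deg Q=-(g-1)+d$, by induction on $d$, with base case the finiteness of dormant opers (Theorem \ref{Quotexpdim}) and inductive step the observation that the condition $E\subset F_*(Q(-x))$ is cut out by a section of a rank-$2$ bundle, hence has codimension at most $2$, squeezed against the Grothendieck lower bound of Proposition \ref{dimestimateQuot}.

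The second gap is your generic-finiteness claim for $\pi$ on $\widetilde Z$: you assert that the locus where the adjoint $F^*(E)\to Q$ fails to be surjective is a proper closed subset, but that is precisely the nontrivial point -- as your own computation shows, the fibre of $\pi$ over $E_0$ is $(g-2)$-dimensional, so $\pi$ is very far from finite at the dormant oper itself. The paper proves genericity by contradiction: if the adjoint were nowhere surjective on a component $\cC$, then $\cC=\bigcup_{x\in X}\bigl(\mathrm{Quot}^{2,0}(F_*(Q(-x)))\cap\cC\bigr)$, and a dimension count using Lemma \ref{dimQuotgen} with $d=g-3$ excludes both possible cases. Without Lemma \ref{dimQuotgen} (or an equivalent substitute) you have neither the upper bound nor the generic injectivity, so the proposal as written does not close.
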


\begin{proof}
We recall from Theorem \ref{locusJ} that there is a surjective morphism
$$ \pi : \QQuot(1,2,0) \lra \cJ(2),$$
and that $\alpha : \QQuot(1,2,0) \rightarrow \Pic^{-1}(X)$ is a fibration, whose fibers
$\alpha^{-1}(Q) = \mathrm{Quot}^{2,0}(F_*(Q))$ are all isomorphic.


We will need a more general version of Theorem \ref{Quotexpdim}

\begin{lem} \label{dimQuotgen}
Let $Q$ be a line bundle of degree $\deg(Q) = -(g-1) + d$ with $0 \leq d \leq 2g-3$. Then any
irreducible component of $\mathrm{Quot}^{2,0}(F_*(Q))$ containing a dormant oper has dimension $2d$.
\end{lem}

\begin{proof}
We prove the result by induction on $d$. For $d=0$, this is exactly Theorem \ref{Quotexpdim}.
Consider a line bundle $Q$ with $\deg(Q) = -(g-1) + (d+1)$  and let $\cC \subset
\mathrm{Quot}^{2,0}(F_*(Q))$ be an irreducible component containing a dormant oper
$E$. Since $F^*(E)$ is the underlying bundle of an oper, we have an
exact sequence
$$ 0 \ra M \ra F^*(E) \ra L \ra 0 $$
with $\deg (M) = g-1 = - \deg (L)$. By the assumption on the degree of $Q$, we have $\deg (Q) < \deg (M)$ and therefore
$\Hom(M,Q) = \{ 0 \}$. So the morphism $F^*(E)\to Q$ factors as $F^*(E)\onto L\into Q$. So we obtain a 
non-zero section $\cO_X\to Q\tensor L^{-1}$, and since $\deg( Q \otimes L^{-1} ) = d+1$, we can write $L = Q(-D)$ for
some an effective divisor $D$ of degree $d+1$. So, by adjunction, we have $E\into F_*(Q(-D))\subset F_*(Q)$.  We decompose
$D = x + D'$ with $x \in X$ and $D'$ effective of degree $d$. Let $\cC'$ be an
irreducible component of $\mathrm{Quot}^{2,0}(F_*(Q(-x))) \cap \cC$ containing $E$.
By induction we have $\dim \cC' = 2d$.

Now we claim that $\mathrm{codim}_{\cC}(\cC') \leq 2$. To prove this note that
$\cC' \not= \emptyset$. Since $\cC$ is an irreducible component of the Quot-scheme, it is equipped with a universal quotient
sheaf $\cQ$ over $X \times \cC$
$$ 0 \lra \cE \lra p_X^*(F_*(Q)) \lra \cQ \lra 0.$$
We denote by $\cE$ the kernel $\ker (p_X^*(F_*(Q)) \onto \cQ)$. Since $\cQ$ and $p_X^*(F_*(Q))$ are 
$\cC$-flat, $\cE$ is also a $\cC$-flat and $\forall c \in \cC$ the homomorphism
$\cE_{| X \times \{ c \}} \ra F_*(Q)$ is injective. Hence, since $F_*(Q)$ is locally free, $\cE_{| X \times \{ c \}}$
is also locally free (since torsion free over a smooth curve) and by \cite{huy-book} Lemma 2.1.7 we conclude that
$\cE$ is locally free over $X \times \cC$. Since $p_X^*(F_*(Q)) = (F \times \mathrm{id}_{\cC})_*(p_X^*(Q))$ we obtain 
by adjunctioon a non-zero map $(F \times \mathrm{id}_{\cC})^* \cE \ra  p_X^*(Q)$, hence a section $\sigma$ of the
rank-$2$ vector bundle $\cV := Hom ((F \times \mathrm{id}_{\cC})^* \cE ,p_X^*(Q))$ over $X \times \cC$. It is clear that 
$\mathrm{Quot}^{2,0}(F_*(Q(-x))) \cap \cC$ is the zero-scheme of the restricted section $\sigma_{| \{ x \} \times \cC } \in
H^0(\cC, \cV_{| \{ x \} \times \cC } )$. Hence $\mathrm{codim}_{\cC}(\cC') \leq 2$ and therefore 
$\dim \cC \leq 2d+2$. On the other hand, by the
dimension estimate of the Quot-schemes given in Proposition \ref{dimestimateQuot} we
have $\dim \cC \geq 2d+2$. Therefore $\dim \cC = 2d+2$ and we are done.
\end{proof}

Using the fibration $\alpha$ we deduce from Lemma \ref{dimQuotgen} applied with $d = g-2$ that any
irreducible component $\cI$ of $\QQuot(1,2,0)$ containing a dormant oper has dimension
$3g-4$. Therefore it will suffice to show that the restriction of $\pi$ to
$\cI$ is generically injective. This, in turn, will follow from the fact that a general
vector bundle $E \in \cI$ satisfies
$$ f_E: F^*(E) \rightarrow Q \qquad \text{surjective},$$
where the map $f_E$ is obtained by adjunction from the sheaf inclusion $E \hookrightarrow F_*(Q)$
with $Q \in \Pic^{-1}(X)$. In fact, if $f_E$ is surjective, then $0 \subset \ker f_E \subset F^*(E)$
is the Harder-Narasimhan filtration of $F^*(E)$, which implies that the quotient $Q$ is unique and
that $\dim \Hom(E, F_*(Q)) = 1$.


Let us now show that the map $f_E$ is surjective for a general $E \in \cC \subset
\mathrm{Quot}^{2,0}(F_*(Q))$, where $\cC$ is an irreducible component containing a dormant oper
and $\deg(Q) = -1$. Suppose on the contrary that this is not the case. Then any $E \in \cC$
lies in $\mathrm{Quot}^{2,0}(F_*(Q(-x)))$ for some $x \in X$, i.e.
\begin{equation} \label{coveringCC}
 \cC = \bigcup_{x \in X} \mathrm{Quot}^{2,0}(F_*(Q(-x))) \cap \cC.
\end{equation}
Two cases can occur:

$(1)$ there exists a point $x \in X$ such that $\dim \mathrm{Quot}^{2,0}(F_*(Q(-x))) \cap \cC = 2g-4$. Then
$\cC = \mathrm{Quot}^{2,0}(F_*(Q(-x))) \cap \cC$ and contains a dormant oper. This contradicts Lemma
\ref{dimQuotgen} with $d = g-3$.

$(2)$ for any point $x \in X$ we have  $\dim \mathrm{Quot}^{2,0}(F_*(Q(-x))) \cap \cC \geq 2g-5$. Because of
\eqref{coveringCC} there exists a point $x \in X$ such that $\mathrm{Quot}^{2,0}(F_*(Q(-x))) \cap \cC$
contains a dormant oper, contradicting again Lemma \ref{dimQuotgen} with $d= g-3$.
\end{proof}

\begin{rem}
Since the set of dormant opers is non-empty, there always exists at least one irreducible
component of $\cJ(2)$ of dimension $3g-4$.
\end{rem}

\begin{ques}
Does any irreducible component of $\cJ(2)$ contain a dormant oper? The only case where the answer is known
is $p=2$: in that case by \cite{JRXY} the locus $\cJ(2)$ is irreducible.
\end{ques}


\bibliographystyle{amsalpha}

\providecommand{\bysame}{\leavevmode\hbox to3em{\hrulefill}\thinspace}
\providecommand{\MR}{\relax\ifhmode\unskip\space\fi MR }
\providecommand{\MRhref}[2]{%
  \href{http://www.ams.org/mathscinet-getitem?mr=#1}{#2}
}
\providecommand{\href}[2]{#2}
\begin{thebibliography}{}

\end{thebibliography}


\begin{thebibliography}{999999}

\bibitem[BD1]{BD1} A. Beilinson, V. Drinfeld: Quantization of Hitchin's integrable system and Hecke eigensheaves, preprint,
\url{http://www.math.uchicago.edu/~mitya/langlands/hitchin/BD-hitchin.pdf}

\bibitem[BD2]{BD2} A. Beilinson, V. Drinfeld: Opers, \url{arXiv:math/0501398}

\bibitem[CZ]{chen} T.-H. Chen, X. Zhu: Non-Abelian Hodge theory for algebraic curves over characteristic $p$, \url{arxiv:1306.0299v1}

\bibitem[D]{Ducrohet} L. Ducrohet: The Frobenius action on rank $2$ vector bundles over curves
in small genus and small characteristic, Annales Inst. Fourier 59, no. 4 (2009), 1641-1669

\bibitem[F]{F} G. Faltings: Stable $G$-bundles and projective connections,
J. Algebraic Geom. 2 (1993), 507-568

\bibitem[Gr]{Gro} A. Grothendieck: Techniques de construction et th\'eor\`emes d'existence
en g\'eom\'etrie alg\'ebrique IV: Les sch\'emas de Hilbert, Fondements de la G\'eometri\'e Alg\'ebrique.


\bibitem[Gu]{Gu} R.C. Gunning: Special coordinate covering of Riemann surfaces, Math. Ann. 170
(1967), 67-86

\bibitem[He]{heinloth} J. Heinloth: Lectures on the moduli stack of vector bundles, Affine Flag Manifolds and Principal Bundles, 
Trends in Mathematics, Birkh\"auser/Springer Basel AG, Basel (2010), 123 - 153

\bibitem[H]{hir} A. Hirschowitz: Probl\`emes de Brill-Noether en rang sup\'erieur, C.R. Math. Acad. Sci. Paris
307 (1988), 153-156

\bibitem[HL]{huy-book} D. Huybrechts, M. Lehn: The geometry of moduli spaces of sheaves,
Cambridge University Press, (second edition) 2010

\bibitem[J0]{joshi00} K. Joshi, E. Xia: Moduli of vector bundles on curves in positive characteristics,
Compositio Math. 122 (2000), no. 3, 315--321.

\bibitem[J1]{joshi04} K. Joshi: Stability and locally exact differentials on a curve, C.R. Math. Acad. Sci. Paris
338 (2004), 869-872.

\bibitem[J2]{joshi08} K. Joshi: Two remarks on subvarieties of moduli spaces, Internat. J. Math.
19 (2008), no. 2, 237-243.

\bibitem[JRXY]{JRXY} K. Joshi, S. Ramanan, E. Xia, J.-K. Yu: On vector bundles destabilized by Frobenius pull-back,
Compositio Math. 142 (2006), no. 3, 616-630

\bibitem[K]{Katz} N. Katz: Nilpotent connections and the monodromy theorem: Applications to a result of Turritin. Inst. Hautes Etudes Sci. Publ. Math. 39 (1970), 175-232

\bibitem [L]{lange} H. Lange: Some geometrical aspects of vector bundles on curves, Aportaciones Mat. 5 (1992), 53-74

\bibitem [La]{langer} A. Langer: Moduli spaces of sheaves and principal $G$-bundles, in
Algebraic Geometry: Seattle 2005, eds. D. Abramovich et al., Proc. Symp. Pure Math. 80
(2009), 273-308

\bibitem[LanP]{lange-pauly08} H. Lange, C. Pauly: On Frobenius-destabilized rank-$2$ vector bundles over curves, Comment.
Math. Helv. 83 (2008), no. 1, 179-209

\bibitem[LasP1]{laszlo-pauly} Y. Laszlo, C. Pauly: On the Hitchin morphism in positive characteristic, Internat. Math. Res. Notices 3 (2001), 129-143

\bibitem[LasP2]{LasP2} Y. Laszlo, C. Pauly: The Frobenius map, rank $2$ vector bundles
and Kummer's quartic surface in characteristic $2$ and $3$, Adv. in Math. 185 (2004),
246-269

\bibitem[Maz73]{mazur73} B. Mazur: Frobenius and the Hodge filtration
(estimates), Ann. of Math. ($2^{\rm nd}$ series) 98 (1973), 58--95.

\bibitem[Mo1]{Mo} S. Mochizuki: A theory of ordinary $p$-adic curves, Publ. RIMS Kyoto Univ.
32 (1996), 957-1151

\bibitem[Mo2]{Mo2} S. Mochizuki: Foundations of $p$-adic Teichm\"uller theory, AMS/IP
Studies in Advanced Mathematics, vol. 11, American Mathematical Society, Providence,
RI, 1999

\bibitem[N]{N} N. Nitsure: Moduli space of semistable pairs on a curve, Proc. London
Math. Soc. (3) 62 (1991), 275-300

\bibitem[O1]{O} B. Osserman: The generalized Verschiebung for curves of genus $2$, Math. Ann.
336 (2006), no. 4, 963-986

\bibitem[O2]{O2} B. Osserman: Frobenius-unstable bundles and $p$-curvature, Transactions of the Amer. Math. Soc. 360 (2008), no. 1, 273-305

\bibitem[Si]{Si} C.T. Simpson: Moduli of representations of the fundamental group of smooth projective variety. I, Inst.
Hautes Etudes Sci. Publ. Math. 79 (1994), 47-129


\bibitem[Sha]{shatz} S. Shatz: The decomposition and specialization of algebraic families of vector bundles, Compositio
Math. 35 (1977), no.2, 163-187


\bibitem[She]{shepherd} N.I. Shepherd-Barron: Semi-stability and reduction mod p, Topology, Vol. 37, No. 3 (1998), 659-664

\bibitem[Su1]{sun99} X. Sun: Remarks on semistability of $G$-bundles in positive characteristic, Compositio Math. 119
(1999), no.1, 41-52

\bibitem[Su2]{sun06} X. Sun: Direct images of bundles under Frobenius morphism, Invent. Math. 173 (2008), no. 2, 427-447


\end{thebibliography}

\end{document}